\documentclass[10pt]{article}
\usepackage{amsmath}
\usepackage{amssymb}
\usepackage{amsfonts}
\usepackage{amscd}
\usepackage[usenames]{color}
\usepackage{verbatim}
\usepackage{amsthm}
\usepackage{graphicx,psfrag}
\usepackage{epstopdf}
\usepackage{tikz}
\usetikzlibrary{chains}

\input{xypic}
\xyoption{all}

\oddsidemargin .0in
\evensidemargin .0in
\marginparsep 0pt
\topmargin -0.2in
\marginparwidth 0pt
\textwidth6.3in
 \textheight 8.6in

\renewcommand{\theequation}{\arabic{section}.\arabic{equation}}
\newtheorem{theorem}{Theorem}[section]

\newtheorem{lemma}[theorem]{Lemma}
\newtheorem{cor}[theorem]{Corollary}

\newtheorem{remark}[theorem]{Remark}

\newtheorem{example}[theorem]{Example}

\newtheorem{VanishingTheorem}[theorem]{Vanishing Theorem}


\def\ep{\varepsilon}
\def\la{\lambda}
\def\w{\omega}


\def\R{{\mathbb R}}
\def\cx{{\mathbb C}}
\def\P{{\mathbb P}}
\def\V{{\mathcal V}}

\renewcommand{\cal}{\mathcal}
\def\C{{\cal C}}
\def\CP{{\cal P}}
\def\D{{\cal D}}
\def\E{{\cal E}}
\def\F{{\cal F}}

\def\L{{\cal L}}
\def\M{{\cal M}}
\def\N{{\cal N}}
\def\O{{\cal O}}
\def\X{{\cal X}}
\def\CZ{{\cal Z}}


\def\del{\overline \partial}

\def\dim{\operatorname{dim }}
\def\ker{\operatorname{ker\,}}
\def\cok{\operatorname{coker\,}}
\def\Aut{\operatorname{Aut}}
\def\sgn{\operatorname{sgn}}

\def\ind{\operatorname{index\;}}

\def\Re{\operatorname{Re}}


\def\non{\noindent}

\def\ssetminus{\hspace{-.02cm}\setminus\hspace{-.03cm}}

\def\wh#1{\widehat{#1}}
\def\Rf{{\cal R}_f}

\def\tcup{{\textstyle \bigcup}}

\def\f{{\bf f}}
\def\p{{\bf p}}
\def\r{{\bf r}}

\def\sgn{{\rm sgn}}
\def\dimc{\operatorname{dim}_\cx}


\tikzset{ch/.style={node distance=2.3em,circle,draw,on chain,inner sep=1.2pt},chj/.style={ch,join},line width=2pt,baseline=-1ex}
\newcommand{\LabeledCircle}[3][chj]{
\node[#1,label={above: #2},label={below: #3}] {};
}

\newcommand{\DiagramDots}{
\node[chj,draw=none,inner sep=1pt,node distance=1em] {\dots};
}


\newcommand*\samethanks[1][\value{footnote}]{\footnotemark[#1]}


\date{\empty}
\addtocounter{section}{1}
\begin{document}

\title{\bf Recursion Formulas for Spin Hurwitz Numbers}
\vskip.2in

\author{ Junho Lee\thanks{ partially supported by the N.S.F.} 
\and
Thomas H. Parker\samethanks}


\maketitle

\begin{abstract}
The classical Hurwitz numbers   which count coverings of a complex curve have an analog when the curve is endowed with a theta characteristic.  These
``spin Hurwitz numbers'', recently studied by Eskin, Okounkov and Pandharipande, are interesting in their own right.  By the authors' previous work, they are also related to the Gromov-Witten invariants of K\"{a}hler surfaces.
 We prove a recursive formula for spin Hurwitz numbers, which then gives the dimension zero GW invariants of K\"{a}hler surfaces  with positive geometric genus.   The proof uses a degeneration of spin curves, an invariant defined by the spectral flow of  certain anti-linear deformations of  $\del$,   and an interesting localization phenomenon for eigenfunctions that  shows that maps with even ramification points cancel in pairs.
\end{abstract}


\vspace{6mm}


\label{Introduction}
\bigskip

The  Hurwitz numbers  of a complex curve $D$ count covers with specified ramification type.   Specifically, consider
 degree $d$  (possibly disconnected) covering maps  $f:C\to D$ with fixed ramification  points $q^1, \dots, q^k\in D$ and ramification given by  $m^1, \dots, m^k$ where each  $m^i=(m^i_1,\cdots,m^i_{\ell_i})$ is a partition of $d$. The   Euler characteristic  of $C$ is  related to  the genus $h$ of $D$ and   the partition lengths $\ell(m^i)=\ell_i$  by the Riemann-Hurwitz formula
\begin{equation}
\label{introdim0}
\chi(C)\ =\ 2d(1-h)+ \sum_{i=1}^k \big(\ell(m^i)-d).
\end{equation}
In this context, there is an ordinary Hurwitz number
\begin{equation}
\label{introplainHurwitz}
\sum \frac{1}{|\Aut(f)|}
\end{equation}
that counts the covers $f$ satisfying (\ref{introdim0}) mod automorphisms; the sum depends only on $h$ and   $\{m^i\}$.

Now fix a theta characteristic $N$ on  $D$, that is, a holomorphic line bundle with an isomorphism $N^2=K_D$ where $K_D$ is the canonical bundle of $D$.  The pair $(D,N)$ is called spin curve.   By a  well-known theorem of Mumford and Atiyah,   the deformation class of the spin curve is completely characterized by the genus $h$ of $D$ and the {\em parity}
\begin{equation}
\label{introparityD}
p\ =\ (-1)^{h^0(D, N)}
\end{equation}

Now consider degree $d$  ramified covers $f:C\to D$ for which
\begin{equation}
\label{introbullet}
\begin{minipage}{4.2in}
\begin{itemize}
\item each partition $m^i$ is odd, i.e. each $m^i_j$ is an odd number.
\end{itemize}
\end{minipage} \hspace{2cm}
\end{equation}
In this case, the ramification divisor $\Rf$ of $f$ is even and  the twisted pullback bundle
\begin{equation}\label{thetaonC}
N_f \ = \ f^*N\otimes \O\big(\tfrac12 \Rf\big)
\end{equation}
is a theta characteristic on $C$ with  parity
\begin{equation}
\label{introparity}
p(f)\ =\ (-1)^{h^0(C, N_f)}.
\end{equation}
After choosing a spin curve $(D,N)$ and odd partitions $m^1, \dots, m^k$, we can consider  the total count of maps  satisfying (\ref{introdim0}) modulo automorphisms, counting each map  as $\pm 1$ according to its parity.
This sum is  also a deformation invariant   of the spin curve $(D,N)$, so depends only on   $h$  and $p$.   Thus we define the {\em spin Hurwitz numbers} of a spin curve $(D,N)$ of genus $h$ and parity $p$ to be
\begin{equation}
\label{SHN}
H_{m^1,\cdots,m^k}^{h,p}\ =\
\sum \frac{p(f)}{|\Aut(f)|}\ \ \ \ \
\end{equation}
where the sum is over all non-isomorphic maps $f$ satisfying (\ref{introdim0}).

\smallskip

 Eskin, Okounkov and  Pandharipande \cite{EOP} gave a combinatorial method for finding the spin Hurwitz numbers when  $D$ is  an elliptic curve with the trivial theta characteristic  (genus $h=1$ and  parity $p= -1$).   Our main result gives recursive formulas that express all other spin Hurwitz numbers (except the related $h=0$ and $h=p=1$ cases) in terms of the Eskin-Okounkov-Pandharipande numbers.   The statement   involves two numbers that are associated with    partition $m=(m_1,\cdots,m_\ell)$ of  $d$, namely
$$
|m|\,=\,\prod m_j\ \ \ \ \
\mbox{and}\ \ \ \ \
m!\,=\,|\Aut(m)|
$$
where $\Aut(m)$.
We call a partition $m$  {\em odd} or {\em even} according to whether $|m|$ is odd or even.

\begin{theorem}\label{Main}
Fix $d>0$ and let $m^1,\cdots,m^k$ be a collection of odd partitions of $d$.
\begin{itemize}
\item[(a)] If $h=h_1+h_2$ and $p\equiv p_1+p_2$ (mod 2) then  for $0\leq k_0\leq k$
\begin{equation}\label{main1}
H_{m^1,\cdots,m^k}^{h,p}\ =\
\sum_m\,|m| m!\ H_{m^1,\cdots,m^{k_0},m}^{h_1,p_1}\cdot \, H_{m,m^{k_0+1},\cdots,m^k}^{h_2,p_2}
\end{equation}
\item[(b)] If $h\geq 2$ or if $(h,p)=(1,+)$ then
\begin{equation}\label{main2}
H_{m^1,\cdots,m^k}^{h,p}\ =\
\sum_m\,|m| m!\   H_{m,m,m^1,\cdots,m^k}^{h-1,p}
\end{equation}
\end{itemize}
where the sums are over all odd partitions $m$ of $d$.
\end{theorem}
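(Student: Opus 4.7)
The plan is to prove both parts of Theorem~\ref{Main} by degenerating the target spin curve and exploiting deformation invariance of the spin Hurwitz numbers. For (a), I construct a one-parameter family $(D_t, N_t)$ of smooth spin curves of genus $h$ and parity $p$ that specializes at $t=0$ to a nodal spin curve $(D_0, N_0)$ whose underlying curve is $D_1 \cup_q D_2$ and whose theta characteristic restricts to $N_i$ of parity $p_i$ on $D_i$. Existence of such a family, for any $p \equiv p_1 + p_2 \pmod 2$, follows from the Mumford--Atiyah classification (\ref{introparityD}). For (b), I use a self-gluing degeneration, pinching a non-separating loop on a genus-$h$ spin curve $(D_t, N_t)$ of parity $p$ to obtain a nodal curve whose normalization has genus $h-1$ and parity $p$, with two marked points identified at a node. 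Deformation invariance of $H^{h,p}_{m^1,\ldots,m^k}$ then reduces both formulas to counting covers of $D_0$ with weights that record the smoothings.

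The second step is combinatorial. Any cover $f : C \to D_0$ contributing to the count decomposes as $f_1 \sqcup f_2 : C_1 \sqcup C_2 \to D_1 \sqcup D_2$ with matching ramification partition $m$ of $d$ over the two branches at $q$. The smoothings of $C$ that extend to a family of covers of the $D_t$ are parameterized by (i) a bijection of the $\ell(m)$ preimages on the two sides respecting ramification indices, giving $|\Aut(m)| = m!$ choices, and (ii) an $m_j$-th root of unity for each matched pair of ramification order $m_j$, giving $\prod m_j = |m|$ further choices. This yields the weight $|m|\,m!$ of (\ref{main1}) and (\ref{main2}) after reconciling the factor $1/|\Aut(f)|$ in (\ref{SHN}) with the automorphism factors of $f_1, f_2$ on the right.

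For the parity I invoke the spectral-flow invariant mentioned in the abstract. Interpreting $p(f) = (-1)^{h^0(C, N_f)}$ as the mod-$2$ index of a real Cauchy--Riemann operator $\del$ on sections of $N_f$ twisted by an anti-linear perturbation, I would prove that this mod-$2$ index is additive under nodal splitting and invariant under smoothing. For odd ramification at each preimage of $q$ the local gluing contribution is trivial, giving the multiplicative identity $p(f) = p(f_1)p(f_2)$ that survives to the nearby smooth fibers.

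The main obstacle, and the heart of the argument, is showing that covers with any \emph{even} ramification at the preimages of $q$ cancel in pairs, so that the sums in (\ref{main1})--(\ref{main2}) range only over odd partitions $m$. For this I would construct, at each even ramification point, a canonical involution on the kernel and cokernel of the anti-linearly perturbed $\del$ operator whose free action pairs contributing covers with opposite parity and whose fixed-point contribution vanishes by the localization phenomenon for eigenfunctions alluded to in the abstract. Establishing this local cancellation and verifying that it removes exactly the even-partition terms from the degeneration sum is where the real work lies; once it is in place, assembling the surviving odd-partition contributions gives (\ref{main1}), and the analogous self-gluing bookkeeping gives (\ref{main2}).
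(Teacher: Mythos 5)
Your outline follows the same skeleton as the paper (degenerate the target, use deformation invariance, extract the weight $|m|\,m!$, prove parity multiplicativity for odd ramification, cancel even ramification in pairs), but two of its load-bearing steps have genuine problems. First, the degeneration you posit for part (a) does not exist as stated: there is no line bundle on $D_1\cup_q D_2$ arising as a limit of theta characteristics whose restriction to each $D_i$ is a theta characteristic $N_i$. The dualizing sheaf of $D_1\cup_q D_2$ restricts to $K_{D_i}(q)$, of odd degree $2h_i-1$, so it admits no square root on $D_i$; equivalently, such a limit would have total degree $(h_1-1)+(h_2-1)=h-2$ while the theta characteristics on the smooth fibers have degree $h-1$. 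This is precisely why the paper follows Cornalba and inserts an exceptional rational curve $E$ at the node, with $\L|_E=\O(1)$ and the squaring map $\Phi$ vanishing on $E$ (and uses the analogous model $\bar D\cup E$ for the non-separating degeneration in part (b)). This is not a cosmetic repair: with $E$ present the covers of the central fiber acquire a middle factor of maps to $E$ (of cardinality $d!\,m!/|m|$), the domains acquire chains of rational curves, and the factor $|m|\,m!$ in (\ref{main1}) emerges only after combining the $|m|^2$ smoothings of the domain nodes, the degree-$(m!)^2$ identification of glued maps, and this $E$-factor. Your two-line count (matchings times roots of unity) reproduces the classical factor for ordinary Hurwitz numbers, but here the parity weights are attached to the individual smoothings, so the count and the signs cannot be decoupled in the way your second step assumes; what must actually be computed is the signed sum of parities over all smoothings of each nodal cover (the paper's Theorem~\ref{MainTask}).

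Second, the two steps you defer are the substance of the theorem, and the cancellation mechanism you sketch is not the right one. The relevant involution is not an involution on the kernel and cokernel of the perturbed operator attached to a single cover; it acts on the finite set of smoothing data — the vectors $\zeta$ of roots of unity labelling the nearby smooth covers $f_{\zeta,s}$ — by negating the root of unity at one even node. That involution is free, so there is no fixed-point contribution to discuss; the real work is showing it reverses parity. In the paper this requires expressing the parity as $(-1)^{h^0(N_1)+h^0(N_2)}$ times a product of $2\times 2$ determinants over the even nodes (via the TR spectral flow and the eigenbundle decomposition of Sections~\ref{section3} and \ref{section7}), and then using the $t\to\infty$ concentration of low eigensections at the even nodes to evaluate each determinant's sign as $\sgn\Re(b\bar b_2\,\zeta_j\overline{\zeta_j'})$, which is odd in $\zeta_j$. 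Likewise the ``additivity of the mod-2 index under nodal splitting'' that you invoke for odd ramification is exactly what fails for $h^0$ itself and must be replaced by the isotopy-invariant spectral-flow formulation together with the vanishing theorem for $\del+tR$. Until these analytic inputs are supplied in the corrected (Cornalba) degeneration, the proposal does not yet yield (\ref{main1}) or (\ref{main2}).
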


Theorem~\ref{Main} applies, in particular, to the spin Hurwitz numbers  that count degree $d$ etale covers, defined as above by  taking $m$ to be the trivial partition $(1^d)$ of $d$.    These  etale  spin Hurwitz numbers $H_d^{h,p}=H^{h, p}_{(1^d)}$ are related to the GW invariants of complex projective surfaces, as follows.

 Let $X$ be such a surface with a smooth canonical divisor $D$.  By the adjunction formula,  the normal bundle $N\to D$ is a theta characteristic, so each component of  $(D, N)$  is  a spin curve.  The results of  $\cite{KL}$ and  \cite{LP1} show that  the  GW invariant of $X$ is a sum over the components of $(D, N)$ of certain {\em local  GW invariants} $GW^{loc}_{g,n}$.   As usual,  one can  work either with the local GW invariants   that count maps from connected domains of genus $g$ or with the local  `Gromov-Taubes' invariants  $GT^{loc}_{g,n}$ that count maps from possibly disconnected domains of Euler characteristic $\chi$.  With the latter, the main formula of \cite{LP1} reads
    \begin{equation}\label{introkey}
GT_{\chi,n}(X,\beta) \ =\ \prod_k (i_k)_*\,GT^{loc}_{\chi_k,n}(D_k, N_k; d_k)
\end{equation}
where $i_k$ is the inclusion $D_k\subset X$.

Now, assume $(D,N)$ is a connected  genus $h$  spin curve with parity $p$ and consider maps $f:C\to D$ where
$\chi(C)= 2d(h-1)$.
Then the space of degree $d$ stable maps  with no marked points has dimension zero,  both sides of (\ref{introkey}) are rational numbers and, in fact,
 the dimension zero local  GT local invariants are exactly the etale spin Hurwitz numbers:
\begin{equation}\label{introbydef}
GT_{d}^{loc,h,p}\,=\,H_d^{h,p}
\end{equation}
(the relation  $\chi= 2d(h-1)$ is implicit in this notation).  For $h=0,1$, these  invariants were calculated for all degrees $d$ in \cite{KL} and \cite{LP1}.
As an immediate corollary to Theorem~\ref{Main}, one can express the local invariants (\ref{introbydef}) with $h\geq 2$  in terms of
  $h=1$    spin Hurwitz numbers  calculated in \cite{EOP}:

\begin{cor}
\label{MainThm2}
Let $H_m$ denote the  spin Hurwitz numbers $H^{1,-}_m$ where $m$ is one or more partitions.   Then for $h\geq 2$ we have
$$
GT_d^{loc,h,p}\,=
\left\{
\begin{array}{ll}
{\displaystyle
\sum \, \prod_{i=1}^{h-1} \,|m^i| m^i!\, H_{m^{h-1}} \cdot H_{m^{h-1},m^{h-2}} \cdots\ H_{m^2,m^1} \cdot H_{m^1}
} & \mbox{if}\ \ h\equiv p \ (\mbox{mod}\ 2) \\
{\displaystyle
\sum \, \prod_{i=1}^{h-1} \,|m^i| m^i!\, H_{m^{h-1},m^{h-1},m^{h-2}}\cdot  H_{m^{h-2},m^{h-3}} \cdots\ H_{m^2,m^1} \cdot H_{m^1}
}
&  \mbox{if}\ \ h\not\equiv p \ (\mbox{mod}\ 2)
\end{array}
\right.
$$
where the sums are over all odd partitions $m^1,\cdots,m^{h-1}$ of $d$.
\end{cor}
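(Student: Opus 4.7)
The plan is to iterate Theorem~\ref{Main} on the etale spin Hurwitz number $H^{h,p}_d=H^{h,p}_{(1^d)}$, peeling off one $H^{1,-}$ factor at each step until only a chain of such factors remains; the corollary then follows from the identification \eqref{introbydef} and the convention $H_\mu=H^{1,-}_\mu$. Throughout I would use freely that attaching a trivial partition $(1^d)$ to the ramification data does not change the value of a spin Hurwitz number.

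Consider first the case $h\equiv p\pmod 2$. I apply Theorem~\ref{Main}(a) with $(h_1,p_1)=(1,-)$, $k_0=0$, and $(h_2,p_2)$ the pair with $h_2=h-1$ and $p_2$ forced by $p_1+p_2\equiv p\pmod 2$, which yields
$$H^{h,p}_{(1^d)}\ =\ \sum_{m^1}|m^1|\,m^1!\;H^{1,-}_{m^1}\cdot H^{h-1,\,p_2}_{m^1}.$$
Since splitting off a $(1,-)$ piece flips the parity and the case hypothesis forces $h-1\equiv p_2\pmod 2$, the right factor is again in the same case. I then iterate: apply (a) once more with $(h_1,p_1)=(1,-)$ and $k_0=1$, keeping $m^1$ with the higher-genus remainder, to extract $H^{1,-}_{m^1,m^2}$. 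After $h-1$ total iterations the chain
$$H^{1,-}_{m^1}\cdot H^{1,-}_{m^1,m^2}\cdots H^{1,-}_{m^{h-2},m^{h-1}}\cdot H^{1,-}_{m^{h-1}}$$
emerges, matching the stated formula after the harmless relabeling $m^i\mapsto m^{h-i}$.

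For the case $h\not\equiv p\pmod 2$, I first apply Theorem~\ref{Main}(b) once,
$$H^{h,p}_{(1^d)}\ =\ \sum_{m^{h-1}}|m^{h-1}|\,m^{h-1}!\;H^{h-1,p}_{m^{h-1},m^{h-1}},$$
and observe that now $h-1\equiv p\pmod 2$, reducing to the previous case applied to $H^{h-1,p}_{m^{h-1},m^{h-1}}$. The only modification is that at the very first application of (a) I choose $k_0=2$, so that both copies of $m^{h-1}$ remain on the genus-$1$ side and form the head factor $H^{1,-}_{m^{h-1},m^{h-1},m^{h-2}}$; the subsequent iterations are carried out exactly as in the first case.

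The only substantive point to verify is the parity arithmetic: a quick induction shows that after $j$ peelings the leftover factor has genus $h-j$ with a parity obtained by flipping $p$ exactly $j$ times (and analogously in the second case). The case hypotheses guarantee that when the iteration terminates this leftover parity is $-$, so the chain ends cleanly with an $H^{1,-}$ of a single partition, as required. This bookkeeping is the only real obstacle; all other steps are direct applications of Theorem~\ref{Main}.
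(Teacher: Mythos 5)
Your proposal is correct and is essentially the paper's own (implicit) argument: the corollary is stated there as an immediate consequence of Theorem~\ref{Main}, obtained exactly as you do by iterating part (a) to peel off genus-one, odd-parity factors, with one initial application of part (b) when $h\not\equiv p \pmod 2$, and your parity bookkeeping ($j$ peelings flip $p$ $j$ times, so the hypotheses force the terminal factor to be of parity $-$) is right. One small wording slip: in each application of (a) with $k_0=1$ (or $k_0=2$ at the head in the second case) it is only the \emph{new} gluing partition that stays with the higher-genus remainder, while the previous partition goes onto the extracted $H^{1,-}$ factor --- which is what your displayed chain in fact uses.
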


 \medskip

 The proof of Theorem~\ref{Main} involves five main steps, described below.  All are based on the observation that the $\del$-operators on spin bundles $N_f$ extend to a 1-parameter family of {\em real} operators 
   \begin{equation}
\label{introLf}
L_t=\del+tR:\Omega^0(C, N_f)\to \Omega^{0,1}(C, N_f)
\end{equation}
with remarkable properties.  The key idea is that the parity of a map $f$ is an isotopy invariant of the family $L_t$,  and the one can explicitly describe the behavior of the operators $L_t$ 
as both the domain  and the target of $f$ degenerate to  nodal spin curves.  This allows us to express both the parity and the number of covering maps in terms of the maps into the irreducible components of the nodal target curve, giving the recursion formulas of Theorem~\ref{Main}.

\medskip 

\noindent{\sc Step 1:}  {\bf Relating $L_t$ and parity}.  Section~\ref{section2} gives method for constructing complex anti-linear bundle maps $R$, which then define a family $L_t=\del+tR$ of operators as in (\ref{introLf}).  We then prove a vanishing theorem showing that $\ker L_t=0$  for each stable map $f$ and each $t\not= 0$.   This property was exploited in our previous work (e.g. \cite{LP1}, \cite{LP2}) and underlies  all later sections.  

 In Section~\ref{section3}  we express the parity as an isotopy invariant --- the ``TR  spectral flow'' ---  of the path of operators $L_t$.  In this form, in contrast to the original definition (\ref{introparity}),  parity is unchanged under  deformations.  We also relate the parity to the determinant of $L_t$ on its low eigenspaces.

\medskip 

\noindent{\sc Step 2:} {\bf Degenerating spin curves and  sum formulas.}  The Hurwitz numbers of  $D$  can be viewed as  the relative  Gromov-Witten invariants  of $D$ relative to a   branch locus $\{q_1, \dots, q_\ell\}\in D$.  Under condition (\ref{introdim0})  the space of  relative stable maps  is a finite set  corresponding to  stable maps $f:C\to D$ branched over $\{q_i\}$.  We then adopt the sum formula   arguments of \cite{IP2}, as the first author has done  in  \cite{L}. There are three parts of the argument:
 \begin{itemize}
 \item Identifying the maps $f:C\to D$ that occur as limits  as $D$ degenerates to a nodal spin curve $D_0$.
\item  Constructing a family $\C\to\Delta$  of  deformations of the maps $f:C_0\to D_0$.
\item A gluing procedure that relates  the moduli space of a general fiber to data   along the central fiber.
\end{itemize}
 In each step, it is necessary to  keep track of the target curve, the domain curve, the map,   the spin structures,  and ultimately the spectral flow.  The spin structure adds complication: in order to extend the spin structure across the central fiber it is necessary, following Cornabla \cite{C}, to insert a rational curve at each node as the target degenerates. Section~\ref{section4}  proves Theorem~\ref{Main} assuming two deferred facts:  the existence of a smooth family moduli space and  a crucial statement (Theorem~\ref{MainTask}) about parities.  
 
\medskip 

\noindent{\sc Step 3:}  {\bf Algebraic families of maps.}   The required family of maps is constructed in Section~\ref{section5}.  The construction, which uses blowups and base changes,  provides explicit coordinates for the analysis done in later sections.    Extra steps are needed to ensure that there is are line bundles on the family whose restrictions to the general fiber gives the spin structure $N$ on $D$ and (\ref{thetaonC}) for each $f:C\to D$.   Moreover, as shown in Section~\ref{section6}, there are anti-linear bundle maps $R$, and hence operators $L_t=\del+tR$  on the family with the properties described in Section~\ref{section2}.

\medskip 
 
 \noindent{\sc Step 4}: {\bf  Eigenbundles of $L_t$ and parity for odd partitions.}    In Section~\ref{section7} we switch from algebraic geometry to analysis and  construct  bundles of  low eigenspaces of $L_t$.    The formulas of Section~\ref{section3} then apply on the family, giving a simple parity formula (Lemma~\ref{oddLemma})  for odd partitions.  But  a  complication arises for even partitions:  the maps into $D_0$ may be ramified over the nodes in a way that does not satisfy (\ref{introbullet}), so  the  irreducible components of $D_0$ do not have well-defined spin Hurwitz numbers.  Correspondingly, we obtain an analytic formula for the parity (Theorem~\ref{evenThm}) that must be evaluated at smooth curves.

 \medskip 
 
\noindent{\sc Step 5}: {\bf Localization and cancellation.}  Finally, we exploit another remarkable property of the operators $L_t$:  as  $t\to \infty$ there is a basis of the low eigenspace of   $L^*_t$ consisting of   ``bump functions'' sharply concentrated at the nodes, and $p(f)$ can be expressed in terms of $L^2$ inner products of these bump functions.    The concentration allows us to pair up maps with even ramification and show that {\em the contributions of the maps with even ramification cancel in pairs}.  This cancelation  is the key observation of the paper and is the final ingredient in the proof of  Theorem~\ref{Main}.

 \smallskip

Section~\ref{section11} presents some specific calculations:  Theorem~\ref{Main} is used to determine all   spin Hurwitz numbers with  degree $d= 4$  for every genus.

 \medskip

Very recently, S.~Gunningham \cite{G} has used completely different methods to obtain results that  overlap ours.  His approach  casts the spin Hurwitz numbers as a topological quantum field theory. He determined all spin Hurwitz numbers  (including etale spin Hurwitz numbers) in terms of the combinatorics of Sergeev algebras.  The exact relationship between  Gunningham's  results  and ours is not immediately clear.

\vspace{.6cm}


\setcounter{equation}{0}
\section{Antilinear deformations of $\del$}
\label{section2}
\bigskip

Let $f:C\to D$ be a  holomorphic map of  degree $d>0$ between smooth curves and let $N\to D$ be a theta characteristic. As shown in  \cite{LP1},  there is a holomorphic 2-form  on the total space of $N$ that induces a  conjugate-linear bundle map $R: f^*N \to \overline{K}_C \otimes f^*N$ with several remarkable properties. In this section we use a different approach to produce a similar bundle map $R: N_f \to \overline{K}_C\otimes N_f$ where $N_f$ is the twisted pullback bundle  (\ref{thetaonC}). This  map $R$ and the associated deformations $\del+tR$ of the the $\del$-operator on $N_f$ are the  central objects in this paper.

\medskip

 To start, let $D$ be a smooth curve with canonical bundle $K_D$ and a Riemannian metric.   Let $N\to D$ be a holomorphic line   bundle with a hermitian metric $\langle\ ,\ \rangle$, conjugate linear in the second factor. Let $( \ , \ )=\Re \langle\ ,\ \rangle$ be the corresponding positive definite inner product, and let $\bar{*}:\Lambda^{p,q}D\otimes N^*\to \Lambda^{1-p, 1-q}D\otimes N$ be  the associated conjugate-linear star operator.

 \begin{lemma}
 \label{Rlemma}
 Any holomorphic section $\varphi$ of $K_D\otimes (N^*)^2$ induces a bundle map
\begin{equation}
\label{2.defofReq}
 R:N\to \overline{K}_D\otimes N
\end{equation}
that, with its adjoint $R^*$ with respect to the  inner product $(\ ,\ )$,  satisfies
 \begin{equation}
 \label{2.threeproperties}
(a)\ \ R J= -JR \hspace{1cm}
(b)\ \  R^*R = |\varphi|^2\, Id   \hspace{1cm}
(c)\ \   \del^* R+R^*\del=0.
\end{equation}
 \end{lemma}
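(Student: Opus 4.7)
My plan is to build $R$ intrinsically as the composition
$$R\ :\ N\ \xrightarrow{\;\iota_\varphi\;}\ \Lambda^{1,0}\otimes N^*\ \xrightarrow{\;\bar{*}\;}\ \Lambda^{0,1}\otimes N\ =\ \overline{K}_D\otimes N,$$
where $\iota_\varphi$ is contraction with $\varphi\in \Lambda^{1,0}\otimes(N^*)^2$, viewed as a $\mathbb{C}$-linear bundle map $N\to \Lambda^{1,0}\otimes N^*$, and $\bar{*}$ is the given conjugate-linear star operator. Property (a) is then automatic, since the first arrow is $\mathbb{C}$-linear and $\bar{*}$ is conjugate linear. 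Property (b) follows because $\bar{*}$ is a pointwise isometry of hermitian bundles and $\iota_\varphi$ has pointwise operator norm $|\varphi|$, so $\|Rs\|^2=|\varphi|^2\|s\|^2$ for every $s\in N$; since $R^*R$ is a self-adjoint $\mathbb{C}$-linear endomorphism of the complex line bundle $N$, it is forced to be the scalar $|\varphi|^2\cdot\mathrm{Id}$.

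Property (c) is the essential content, and the place where holomorphicity of $\varphi$ enters. I would verify it in a local holomorphic frame: choose a coordinate $z$ on $D$ and a holomorphic frame $e$ of $N$, set $h=|e|^2$, and write $\varphi=\psi(z)\,dz\otimes e^{-2}$ with $\psi$ holomorphic. Direct computation from the construction gives
$$R(fe)\,=\,\overline{f\psi}\,h^{-1}\,d\bar z\otimes e,\qquad R^*(\beta\,d\bar z\otimes e)\,=\,\overline{\beta\psi}\,g^{z\bar z}h^{-1}\,e,$$
while the Chern-adjoint formula for the holomorphic hermitian line bundle $N$ reads $\del^*(\beta\,d\bar z\otimes e)=-g^{z\bar z}h^{-1}\partial_z(h\beta)\,e$. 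Expanding $(\del^* R + R^*\del)(fe)$, the two contributions proportional to $\bar f_z\bar\psi$ cancel and the only surviving term is proportional to $\bar f\,\overline{\partial_{\bar z}\psi}$, which vanishes because $\psi$ is holomorphic.

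The one real obstacle is the bookkeeping in this last cancellation: $\del^*$ differentiates the hermitian metric via $\partial_z(h\beta)$, producing a term that a priori is not matched by anything in $R^*\del$, and one must check that after applying $R$ and $R^*$ the coefficients of $\bar f_z\bar\psi$ from the two compositions are precisely opposite. A cleaner coordinate-free variant would invoke the K\"ahler identity $\del^*=-\bar{*}\,\partial\,\bar{*}$ for the Chern connection on $N$ together with $\bar{*}\,\bar{*}=\pm\mathrm{Id}$, which rewrites $\del^*R+R^*\del$ as a differential expression in $\bar\varphi$ whose vanishing is equivalent to $\bar\partial\varphi=0$; either route makes it clear that the hypothesis that $\varphi$ is a \emph{holomorphic} section is used precisely, and only, in step (c).
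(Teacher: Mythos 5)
Your proposal is correct and follows essentially the same route as the paper: it defines $R=\bar{*}\circ\varphi$, gets (a) automatically from conjugate-linearity, (b) from a pointwise norm/line-bundle argument, and (c) from a local holomorphic-frame computation in which holomorphicity of $\varphi$ kills the only surviving term. The bookkeeping you flag as the one obstacle is handled in the paper simply by computing at a point where the metric is Euclidean to second order and $|\nu(p)|=1$ (using $\del^*=-\bar{*}\,\del\,\bar{*}$), while in your general frame it resolves because the $h^{-1}$ appearing in $R$ cancels the $h$ inside $\partial_z(h\beta)$ in the Chern adjoint formula, so no metric-derivative term survives.
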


 \begin{proof}
Regard $\varphi$ as a complex bundle map $\varphi:N\to K_D\otimes N^*$ and set  $R=\bar{*}\circ\varphi$.  Because $\bar{*}$ is conjugate-linear, this  immediately gives $RJ=-J R$.    Fix a point $p$,  a local holomorphic coordinate $z$ around $p$  in which the metric is Euclidean to second order, and a local holomorphic section  $\nu$ of $N$ with $|\nu(p)|=1$.  It suffices to verify that (b) and (c) hold at $p$.

  Let $\nu^*$ denote the dual section to $\nu$ and write $\varphi(\nu) = g dz\nu^*$.  Then $\del g=0$ because $\varphi$ is holomorphic, and at  $p$  we have $R(\nu)= \bar{g}d\bar{z}\nu$ and, taking the adjoint using the real inner product,  $R^*(d\bar{z}\nu)= \bar{g}\nu$.  It follows that $R^*R=|g|^2\, Id = |\varphi|^2\, Id$.
Choosing an arbitrary section $\xi= f \nu$ of $N$, we have
\begin{equation}
\label{R*delequation}
R^*\del\xi \ =\  R^*\left(\tfrac{\partial f}{\partial{\bar{z}}}\, d\bar{z}\nu\right)
\ =\  \overline{\left(\tfrac{\partial f}{\partial{\bar{z}}}\right)} \, R^*(d\bar{z}\nu)
\ =\  \bar{g}\tfrac{\partial \bar{f}}{\partial{z}}\, \nu
\end{equation}
at $p$.   On the other hand,   the formulas $\del^*=- \bar{*}\del \bar{*}$ and $(\bar{*})^2=-1$ on $\Omega^{0,1}(N^*)$ show that at $p$
$$
\del^*R\xi
\ =\  -\bar{*}\, \del\,  \bar{*}\bar{*}\,\varphi(f\nu)
\ =\  \bar{*}\,\del\left(f  g\,   dz\nu^*\right)
\ =\ \bar{g} \tfrac{\partial \bar{f}}{\partial{z}}\ \bar{*} \,(d\bar{z}dz\nu^*)
\ =\  -\bar{g} \tfrac{\partial \bar{f}}{\partial{z}}\ \nu.
$$
 This  cancels (\ref{R*delequation}), giving statement (c).
 \end{proof}

An endomorphism $R$ as in (\ref{2.defofReq}) determines a  1-parameter family of perturbations of the $\del$-operator, namely the
 operators  $L_t:\Omega^0(N)\to \Omega^{0,1}(N)$ defined by
 \begin{equation}
 \label{Defdelt}
L_t=\del +t R \qquad  t\in\R.
  \end{equation}
Properties (\ref{2.threeproperties})  imply a remarkably simple vanishing theorem.

 \begin{VanishingTheorem}
 \label{2.vanishingtheorem}
If $R$  satisfies (\ref{2.threeproperties}) with $\varphi\not\equiv  0$, then $\ker L_t=0$ for each  $t\not= 0$.
 \end{VanishingTheorem}

 \begin{proof}
If $L_t\psi=0$ then by (\ref{2.threeproperties}) we have
\begin{equation}
\label{2.vanishintegral}
0\ =\ \int_D | L_t\xi|^2\ =\
\int_D  |\del\xi|^2\ +\ |R|^2\, |\xi|^2.
\end{equation}
Thus $\xi$ is holomorphic and vanishes on the open set where $R=\varphi\not= 0$, so $\xi\equiv 0$.
 \end{proof}

\bigskip

 Many of the results in subsequent sections can be viewed as natural extensions of  Theorem~\ref{2.vanishingtheorem}.  For a first extension, let   $f:C\to D$ be a holomorphic map  with ramification points $q_j$ and  ramification divisor $\Rf=\sum (m_j-1)q_j$.  If $N$ is a   theta characteristic  on $D$ and $A$ is any divisor on $C$, we can consider the twisted bundle
 \begin{equation*}
 \label{1.2}
N_f\ =\ f^*N\otimes \O_C(A)
\end{equation*}
on $C$. We then have the following  relative version of Lemma~\ref{Rlemma}.

\begin{cor}
\label{2.tRfcor}
A holomorphic section $\varphi$ of $\O(\Rf-2A)$ induces a bundle map
\begin{equation}
\label{defofReq}
 R_f:N_f\to \overline{K}_C\otimes N_f
\end{equation}
that satisfies properties (\ref{2.threeproperties}), and the Vanishing Theorem~\ref{2.vanishingtheorem} applies to $L_t=\del+tR_f$.
\end{cor}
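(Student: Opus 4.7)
The plan is to reduce Corollary~\ref{2.tRfcor} directly to Lemma~\ref{Rlemma} applied on the domain curve $C$ with the line bundle $N_f$ playing the role of $N$. The only nontrivial point is to identify the line bundle in which $\varphi$ naturally lives with the one required by Lemma~\ref{Rlemma}, namely $K_C\otimes(N_f^*)^2$.

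First I would compute this target line bundle. Since $N$ is a theta characteristic on $D$, we have $(f^*N)^2=f^*K_D$, and the differential $df$ of the ramified cover yields the standard isomorphism $K_C\cong f^*K_D\otimes\O(R_f)$ (with $df$ vanishing to order $m_j-1$ at $q_j$). Combining these with $N_f=f^*N\otimes\O(\tfrac12 R_f)\otimes\O(A')$ --- or, more generally in the setup of the corollary, $N_f=f^*N\otimes\O(A)$ --- gives
\begin{equation*}
K_C\otimes(N_f^*)^2\ =\ \bigl(f^*K_D\otimes\O(R_f)\bigr)\otimes f^*K_D^{-1}\otimes\O(-2A)\ =\ \O(R_f-2A).
\end{equation*}
Thus any holomorphic section $\varphi$ of $\O(R_f-2A)$ is, via this canonical identification, a holomorphic section of $K_C\otimes(N_f^*)^2$.

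Next I would invoke Lemma~\ref{Rlemma} verbatim with $(D,N)$ replaced by $(C,N_f)$, after equipping $N_f$ with any hermitian metric (e.g.\ the one induced by the pullback of the metric on $N$ together with a choice of metric on $\O(A)$). The lemma never used that $N$ was a theta characteristic; it used only that $N$ was a hermitian holomorphic line bundle over a smooth curve together with a holomorphic section of $K_{(\cdot)}\otimes(N^*)^2$. The construction $R_f=\bar*\circ\varphi:N_f\to\overline{K}_C\otimes N_f$ then applies and yields the three properties in (\ref{2.threeproperties}) with the inner products, adjoint, and star operator now formed on $C$ rather than $D$.

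Finally, the Vanishing Theorem~\ref{2.vanishingtheorem} transfers without modification: its proof used only (a),(b),(c) of (\ref{2.threeproperties}) together with $\varphi\not\equiv 0$, and the integral identity (\ref{2.vanishintegral}) on $C$ forces any $\xi\in\ker L_t$ to be holomorphic and to vanish on the nonempty open set $\{\varphi\neq 0\}\subset C$, hence on all of $C$ by unique continuation for holomorphic sections. The main (minor) obstacle is simply book-keeping the canonical bundle identification $K_C\cong f^*K_D\otimes\O(R_f)$ for ramified covers between smooth curves; everything else is a direct transcription of Section~\ref{section2}'s arguments to the curve $C$.
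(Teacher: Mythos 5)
Your proposal is correct and follows essentially the same route as the paper: the paper's proof is precisely the observation that the Hurwitz formula together with $N^2=K_D$ identifies $K_C\otimes(N_f^*)^2$ with $\O(\Rf-2A)$, after which Lemma~\ref{Rlemma} (applied on $C$ with $N_f$ in place of $N$) and the Vanishing Theorem carry over verbatim. Your extra remarks about the choice of hermitian metric and unique continuation are just explicit book-keeping of what the paper leaves implicit.
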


\begin{proof}
The Hurwitz formula and the  isomorphism $N^2= K_D$  induce  an isomorphism $K_C\otimes (N_f^{*})^2   = \O(\Rf-2A)$, so we can apply  Lemma~\ref{Rlemma}.
\end{proof}

\vspace{.6cm}


\setcounter{equation}{0}
\section{Parity as the TR spectral flow}
\label{section3}
\bigskip

Suppose that $A_t:V_t\to W_t$ is a smooth path of linear maps where $V_t$ and $W_t$ are the fibers of real vector bundles  $V$ and $W$ over $\R$.   The real variety ${\cal S}\subset Hom(V, W)$ of non-invertible maps separates  the bundle $Hom(V,W)$ into connected open sets called chambers. If $A_{t_1}$ and $A_{t_2}$ are non-singular, the {\em mod 2 spectral flow} of the path $A_t$ from $t_1$ to $t_2$   is calculated by perturbing the family to be transversal to ${\cal S}$ and counting the number of times the family crosses ${\cal S}$ modulo 2; this is independent of the perturbation.    This section describes  a modified spectral flow that applies to the operators  $L_t=\del +t R$ of (\ref{defofReq}).

\medskip

We begin with a definition that occurs in quantum mechanics.   Let $V$ and $W$ be real vector bundles over $\R$.
 A  {\em TR (``time-reversal'') structure} is a lift of the map $t\mapsto -t$ to bundle maps $T: V\to V$ and  $T:W\to W$ satisfying $T^2=-Id.$  A bundle map $A:V\to W$  is  {\em TR invariant} if there is a $T$ as above such that
\begin{equation}
\label{3.New.1}
[ A, T]=0 \qquad \quad \mbox{that is, }\ \  A_{-t}\ =\ T_t A_t T_t^{-1}.
\end{equation}
In particular, $T_0=J$ is  a complex structure  on  $V_0$ and $W_0$ and  by (\ref{3.New.1}) and $A_0$ is  complex linear.  

Let  ${\cal TR}$ denote the space of all smooth TR invariant $A:V\to W$ that are invertible  except  at finitely many values of $t$.  For an open dense set of  $A\in {\cal TR}$,    $A_0$ is non-singular and $A$ intersects ${\cal S}$ transversally at finitely many points $\{\pm t_i\}$ (proof:  given $A$, perturb $A_t$ for $t\geq 0$ to $A'_t$ with  transversal to ${\cal S}$ and  $A'_0$ is complex and invertible, then define $A_{-t}$ by (\ref{3.New.1}) and  smoothing, symmetrically in $t$,  around $t=0$). Thus the  mod 2 spectral flow from $t=-\infty$ to $t=\infty$ is well-defined, but is 0 because the singular points are symmetric.
However, there is  a well-defined   {\em TR spectral flow} 
\begin{equation}
\label{3.defTRSF}
SF^{TR}:  {\cal TR} \to \{\pm 1\}
\end{equation}
defined for $A\in {\cal TR}$ by perturbing to a generic $C\in {\cal TR}$ and setting $SF^{TR}(A)= (-1)^s$ where $s$ is  the mod 2 spectral flow of $C$  from $t=0$ to $t=\infty$.   Regarding $C$  as a path in $Hom(V,W)$,  $s$ is the mod 2 intersection $C\cap {\cal S}$, which depends only on the homology class is $C$.  If $D$  is another generic perturbation with $s'=D\cap{\cal S}$, then $s-s' = \gamma\cap {\cal S}$ where $\gamma$ is a path from $B_0$ to $C_0$.  But then $s-s'$ is even because $B_0$ to $C_0$ are complex -linear isomorphisms.  Thus $SF^{TR}(A)$ is independent of the perturbation.   In practice, two  formulas are useful:

\medskip

(i)   If $V$ and $W$ both have finite rank $r$,  the complex orientation on $V_0$ and $W_0$ extends   to orient all fibers of  $V$ and $W$.    This means that $\sgn \det A_t$ is canonically defined for every $A\in{\cal TR}$ and all $t$.  For generic $A\in {\cal TR}$  the sign of $\det A_t$ is positive for $t=0$ and changes sign with each transversal crossing of ${\cal S}$.  Thus
\begin{equation}
\label{3.SFformula1}
SF^{TR}(A)\ =\ \sgn \det A_t
\end{equation}
whenever $A_s$ is non-singular for all $s\geq t$.

\smallskip

(ii) Now suppose that $\ker A_0$ is finite-dimensional and $\dot{A}_0$ restricts to an isomorphism $B:\ker A_0\to \cok A_0$.  Then $\ker A_0$ and $\cok A_0$ are complex vector spaces of the same dimension $d$.   Choose a complex-linear map $C:\ker A_0\to \cok A_0$ and perturb
 $A$ to a generic  $A'\in{\cal TR}$ as above with $A'_0=A_0+\ep C$ and $A'(t)=A_t$ for all $t\geq \delta$.  Then $\det C>0$ and the mod 2 spectral flow of $A'_t$ from $t=0$ to $t=\delta$ is $\sgn\det B$.  But one sees by differentiating  (\ref{3.New.1}) that $B$ satisfies $JB=-BJ$; therefore $\det B=(-1)^d$ because the    eigenvalues of $B$ come in pairs $\pm \sqrt{-1}\, \la_i$.  We conclude that
\begin{equation}
\label{3.SFformula2}
SF^{TR}(A)\ =\   (-1)^{\mbox{$\mbox{dim}_{\cx}\, \ker A_0$}} 
\end{equation}

\bigskip

The TR spectral flow readily applies to the operators introduced in Section~\ref{section3}.   Let $(D, N)$ be a  smooth spin curve with a  bundle map  $R$  as in Lemma~\ref{Rlemma} that is non-zero almost everywhere.  For each $t$,  $L_t=\del +tR$ extends to a  Fredholm map
$$
L_t: V_\cx \to W_\cx
$$
from the Sobolev $W^{1,2}$  completion  of $\Omega^0(N)$  to the $L^2$ completion  of $\Omega^{0,1}(N)$.  By elliptic theory, $V_\cx$ (resp. $W_\cx$)  decomposes into finite-dimensional real eigenspaces $E_\la$ of  $L_t^*L_t$ (resp. $L_tL^*_t$) whose  eigenvalues $\{\la\}$  are real, non-negative, discrete, and vary continuously with $t$. For each $t$, let $V_t\subset V_\cx$ and $W_t\subset W_\cx$ be the closure of the {\em real} span of the eigenspaces; these form vector bundles $V, W$ over $\R$.  By Property (\ref{2.threeproperties}a) we have
\begin{equation}
\label{3.JLJL}
JL_tJ^{-1}\ =\ -J(\del +tR)J\ =\ -J (J\del-tJR)\ =\ L_{-t}.
\end{equation}
Thus $T=J$ is a TR structure and $L=\{L_t\}$ is a TR-invariant operator.

To calculate the invariant (\ref{3.defTRSF}), we can reduce to a finite-dimensional situation. Fix $\la_0>0$ not in the spectrum of $\del^*\del$ and    define the low eigenspaces  of $L_t^*L_t$ and $L_tL_t^*$ by setting
\begin{equation}
\label{3.defEF}
E_t =  \bigoplus_{\la<\la_0} E_\la
\ \ \ \ \  \mbox{and}\ \ \ \ \
F_t =  \bigoplus_{\la<\la_0} F_\la.
\end{equation}
These form finite-rank real vector bundles $E\subset V$ and $F\subset W$ over an interval $[-\delta, \delta]$ where $\la_0$ remains outside the spectrum, and (\ref{3.JLJL}) again shows that $L:E\to F$ is TR-invariant.

\begin{theorem}
\label{Injective-Esp}
The parity of a spin structure $(D,N)$ is the TR spectral flow of the Fredholm operator $L:V\to W$, and for $0<|t|<\delta$  it   is also  the determinant of the finite-dimensional operator $L_t: E_t\to F_t$:
$$
p\ =\ SF^{TR}(L)
\qquad\mbox{and}\qquad
SF^{TR}(L)\ =\  \sgn \det L_t \qquad \mbox{for $|t|\leq \delta$.}
$$
\end{theorem}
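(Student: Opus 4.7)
The plan is to reduce everything to the finite-dimensional bundle $L:E\to F$ over $[-\delta,\delta]$ and then apply the two explicit formulas (\ref{3.SFformula1}) and (\ref{3.SFformula2}) for $SF^{TR}$ developed in this section. The whole argument is a chain of reductions in which the Vanishing Theorem does the real work.

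First I would verify that the TR spectral flow of the Fredholm family $L:V\to W$ coincides with that of its restriction $L:E\to F$. The $W^{1,2}$-completion splits $L_t$-equivariantly as $E_t\oplus E_t^{\perp}\to F_t\oplus F_t^{\perp}$, where the decomposition is continuous in $t$ on $[-\delta,\delta]$ because $\lambda_0$ lies outside the spectrum of $L_t^*L_t$ there. On the high-eigenspace piece the eigenvalues of $L_t^*L_t$ stay $\geq\lambda_0>0$, so $L_t\vert_{E_t^{\perp}}$ is an isomorphism for every $t$; it therefore contributes no singular crossings and does not affect the TR spectral flow. Thus $SF^{TR}(L)=SF^{TR}(L\vert_E)$, and from now on we work with the finite-rank TR-equivariant family $L\vert_E$.

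Next I would prove the second equality by appealing to formula (\ref{3.SFformula1}). Fix any $t\in(0,\delta]$. By the Vanishing Theorem~\ref{2.vanishingtheorem}, $\ker L_t=0$ on all of $V_\cx$, and hence $L_t\vert_{E_t}:E_t\to F_t$ is injective between real vector spaces of the same (finite) rank, so it is an isomorphism. The same holds for every $s\in(0,\delta]$, so $L\vert_E$ has no singular crossings on this interval; after perturbing within $\mathcal{TR}$ to a generic $A'$ with $A'_s=L_s\vert_{E_s}$ for $s\geq t$ and $A'_0$ complex-linear and invertible, we see that $\sgn\det A'_s$ is positive at $0$ and suffers no sign change up to $s=t$. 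Hence $SF^{TR}(L\vert_E)=\sgn\det L_t\vert_{E_t}$, which is the second claim; the case $t\in[-\delta,0)$ follows by TR-symmetry.

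Finally I would compute $SF^{TR}(L\vert_E)$ at $t=0$ using formula (\ref{3.SFformula2}). The kernel of $L_0\vert_{E_0}$ is just $\ker\del=H^0(D,N)$ (the zero eigenspace of $\del^*\del$ lies in $E_0$), a complex vector space of dimension $d=h^0(D,N)$; its cokernel is $H^{0,1}(D,N)\cong H^0(D,N)^*$ by Serre duality via $K_D=N^2$, again of complex dimension $d$. The derivative $\dot L_0=R$ induces a map $B:\ker L_0\to\cok L_0$ which is an isomorphism: if it had a kernel, then for small $t\neq 0$ ordinary perturbation theory would produce a nonzero element of $\ker L_t$, contradicting the Vanishing Theorem. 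Since $T_t\equiv J$ is constant, differentiating (\ref{3.New.1}) at $0$ gives $JR=-RJ$, i.e.\ the anticommutation hypothesis of formula~(ii). Hence $SF^{TR}(L\vert_E)=(-1)^{\dim_\cx\ker L_0}=(-1)^{h^0(D,N)}=p$, which combined with the preceding paragraph proves both assertions.

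The main obstacle I anticipate is bookkeeping rather than substance: confirming that the TR structure $T=J$ survives all the restrictions (the finite-rank reduction, the generic perturbation used in formula~(ii)), and checking that $B$ is an isomorphism without invoking a Weitzenb\"ock or Serre-duality identification directly, but instead reading it off cleanly from the Vanishing Theorem. Everything else is a direct application of the two formulas set up earlier in the section.
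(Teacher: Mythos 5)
Your overall architecture --- reduce to the low eigenbundle $E\to F$, apply (\ref{3.SFformula1}) away from $t=0$ and (\ref{3.SFformula2}) at $t=0$ --- is exactly the paper's argument, and your finite-rank reduction and the verification $JB=-BJ$ are fine. But there is a genuine gap at the crucial step, namely that $B:\ker L_0\to \cok L_0$ is an isomorphism. You argue that if $B$ had a kernel then ``ordinary perturbation theory would produce a nonzero element of $\ker L_t$'' for small $t\neq 0$, contradicting the Vanishing Theorem. That inference is invalid. If $\xi\in\ker\del$ satisfies $R\xi=\del\eta$ (i.e. $B\xi=0$), the natural candidate $\xi_t=\xi-t\eta$ only satisfies $L_t\xi_t=-t^2R\eta$: one obtains an approximate kernel element and a low eigenvalue of $L_t^*L_t$ of order $t^4$, not an actual element of $\ker L_t$. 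Indeed, for the linear $2\times 2$ family $L_t(x,y)=(tx+y,\,ty)$ one has $B=0$ while $\ker L_t=0$ for every $t\neq 0$; since your argument uses nothing beyond linearity in $t$ and the vanishing of $\ker L_t$, it cannot rule out this behavior. The injectivity of $B$ really does require the special structure of $R$: the computation inside the proof of Theorem~\ref{2.vanishingtheorem} ($R^*R=|\varphi|^2\,Id$ plus unique continuation of holomorphic sections) shows $R$ is injective on $\ker\del$; property (\ref{2.threeproperties}c) gives $\del^*(R\xi)=-R^*\del\xi=0$ for $\xi\in\ker\del$, so $R$ carries $\ker\del$ into $\ker\del^*$, the harmonic model of $\cok\del$, whence $B$ is literally $R|_{\ker\del}$ with no projection needed; and Riemann--Roch ($\chi(D,N)=0$, equivalently your Serre-duality count) gives equal dimensions, so $B$ is an isomorphism. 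This is the paper's route, and some such use of properties (\ref{2.threeproperties}) is unavoidable.

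A secondary slip occurs in your derivation of the second equality: you assert that the generic perturbation $A'$ ``suffers no sign change up to $s=t$.'' That cannot be arranged in general --- when the parity is odd, any generic $A'\in{\cal TR}$ with $A'_0$ invertible and $A'_s=L_s$ for $s\geq t$ must cross ${\cal S}$ an odd number of times on $(0,t)$; taken literally your sentence would prove $\sgn\det L_t\equiv +1$. What is true, and all that is needed, is the content of (\ref{3.SFformula1}): $\det A'_0>0$, the sign flips at each transversal crossing, and the Vanishing Theorem guarantees no crossings for $s\geq t$ (on the whole Fredholm family, which also handles $s>\delta$ where $E$ is not defined), so $\sgn\det L_t$ records the mod 2 crossing count and equals $SF^{TR}(L)$.
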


\begin{proof}
By its definition (\ref{introparity}), the parity is $p=(-1)^h$ where $h=\dimc \ker \del = \dimc \ker L_0$.  Observe that  $\dot{L}_0=R$ is injective on $\ker \del$ by Theorem~\ref{2.vanishingtheorem}, and hence is an isomorphism because Riemann-Roch shows that $\dim \ker \del-\dim \cok \del = \chi(D, N)=0$.  The
first equality therefore follows by  (\ref{3.SFformula2}).  For  all $-\delta\leq t\leq \delta$, $L_t^*L_t$ is non-singular on the eigenspaces with   $\la>\la_0$, so the spectral flow is determined by the restriction of $L_t$ to the low eigenspaces (\ref{3.defEF}), where it is given by formula (\ref{3.SFformula1}).
\end{proof}

 As a corollary, we obtain  a simple proof of the Atiyah-Mumford Theorem on spin structures.
 
 \begin{cor}
\label{A-Mtheorem}
Parity is an isotopy invariant of spin structures $(D,N)$.
\end{cor}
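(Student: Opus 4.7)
The plan is to leverage the TR-spectral-flow formulation of parity in Theorem~\ref{Injective-Esp}: once parity is realized as a deformation-invariant of a family of Fredholm operators, the corollary reduces to checking continuity of that invariant along an isotopy.

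First I would make the construction of $R$ canonical in the spin curve. The defining isomorphism $N^2 = K_D$ gives $K_D \otimes (N^*)^2 = \O_D$, so $H^0(D, K_D \otimes (N^*)^2) \cong \cx$, and a canonical choice of the holomorphic section $\varphi$ of Lemma~\ref{Rlemma} is the constant section $\varphi \equiv 1$. Given a smooth isotopy $\{(D_s, N_s)\}_{s \in [0,1]}$ of spin curves, this yields a smoothly varying family of bundle maps $R_s : N_s \to \overline{K}_{D_s} \otimes N_s$ satisfying (\ref{2.threeproperties}), and hence a smooth two-parameter family
\[
L^s_t \;=\; \del_s + t R_s
\]
of Fredholm operators, each TR-invariant with $T = J$ by (\ref{3.JLJL}).

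Next I would verify that the low-eigenspace picture behind Theorem~\ref{Injective-Esp} varies continuously in $s$. Fix $s_0 \in [0,1]$ and choose $\la_0 > 0$ outside the spectrum of $(L^{s_0}_0)^* L^{s_0}_0$. By standard elliptic continuity, $\la_0$ remains a spectral gap for $(L^s_t)^* L^s_t$ for $s$ in some neighborhood of $s_0$ and $|t| \leq \delta$, and the low eigenspaces $E^s_t$, $F^s_t$ of (\ref{3.defEF}) assemble into continuous finite-rank real vector bundles over this parameter region. At $t = 0$ the operator $L^s_0 = \del_s$ is complex linear, so $E^s_0$ and $F^s_0$ carry complex orientations that extend to orient all nearby fibers, making $\sgn \det L^s_t|_{E^s_t}$ well-defined.

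Finally I would combine these facts. By Theorem~\ref{Injective-Esp}, for any $0 < |t| \leq \delta$,
\[
p(N_s) \;=\; SF^{TR}(L^s) \;=\; \sgn \det L^s_t\big|_{E^s_t}.
\]
The Vanishing Theorem~\ref{2.vanishingtheorem} together with the index-zero Riemann--Roch argument used inside the proof of Theorem~\ref{Injective-Esp} guarantees that $L^s_t$ is an isomorphism for $t \neq 0$, so this determinant is nowhere zero. Fixing any $t \in (0, \delta]$, the integer-valued function $s \mapsto \sgn \det L^s_t|_{E^s_t}$ is continuous and nowhere zero on a neighborhood of $s_0$, hence locally constant. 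Connectedness of $[0,1]$ then forces $p(N_s)$ to be constant along the isotopy. The main obstacle is the uniform spectral-gap statement in the middle step; it is a standard consequence of elliptic theory once the family $L^s_t$ is known to depend smoothly on $s$, which is why the canonical choice $\varphi_s \equiv 1$ (rather than an arbitrary section) is convenient.
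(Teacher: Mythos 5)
Your proposal is correct and follows essentially the same route as the paper: triviality of $K_{D_s}\otimes(N_s^*)^2$ gives smoothly varying nowhere-zero maps $R_s$, the Vanishing Theorem makes $L^s_t=\del_s+tR_s$ invertible for fixed $t\neq 0$, and Theorem~\ref{Injective-Esp} identifies parity with $SF^{TR}$, which is then constant in $s$. The only difference is that you spell out the spectral-gap and low-eigenspace continuity details that the paper leaves implicit in the homotopy invariance of the TR spectral flow.
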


 \begin{proof}
 If $(D_s, N_s)$ is a path of spin curves then $K_{D_s}(N_s^*)^2=\O$ is trivial for each $s$, so there are smoothly varying nowhere-zero maps $R_s$ as in  Lemma~\ref{Rlemma}.  For fixed $t\not= 0$,   Theorem~\ref{2.vanishingtheorem}   shows that  $L_{s}=\del+tR_s$ is injective for all $s$, so $SF^{TR}(L_{s})$  -- and hence the parity -- is independent of $s$.
 \end{proof}
 
 In Sections~\ref{section5}-\ref{section8} we will extend this proof by incorporating maps as in Corollary~\ref{2.tRfcor} and applying it to families of spin curves that degenerate to nodal curves.

\vspace{.6cm}

\setcounter{equation}{0}
\section{Degeneration, gluing and the proof of Theorem~\ref{Main} }
\label{section4}
\bigskip

The proof of Theorem~\ref{Main} is based on the 
method of \cite{L}:
we express the  spin Hurwitz numbers in terms of  relative Gromov-Witten moduli space and
apply the limiting and gluing arguments of \cite{IP2} for a degeneration of spin curves
to form a family of moduli spaces.
We  then use a smooth model of the family of moduli spaces to calculate  parities.
The calculation immediately yields the desired recursion formula.
This section outlines the proof, drawing on two facts that are deferred:
the construction of a smooth model (done in Sections~\ref{section5} and \ref{section6}), and the computation of parities (done in Sections~\ref{section8}-\ref{section10}).

As in \cite{L}, we begin by  expressing the spin Hurwitz numbers (\ref{SHN}) in terms of GW relative moduli spaces (cf. \cite{IP1}).
Let $D$ be a smooth curve of genus $h$ and let  $V=\{q^1,\cdots,q^k\}$ be a fixed set of points on $D$.
Given partitions $m^1,\cdots,m^k$ of $d$,
a degree $d$ holomorphic map $f:C\to D$ from a (possibly disconnected)  curve $C$ is called
{\em $V$-regular} with  contact partitions $m^1,\cdots,m^k$ if, for each $i=1,\dots k$, $f^{-1}(q^i)$ consists of $\ell(m^i)$ points $q^i_j$ so that the ramification index of $f$ at $q^i_j$ is $m^i_j$. If $m^i_j>1$ then the contact marked point $q^i_j$ is a ramification point of $f$
and $q^i$ is a branch point.
The relative moduli space
\begin{equation}\label{RelativeModuli}
\M^{V}_{\chi,m^1,\cdots,m^k}(D,d)
\end{equation}
consists of isomorphism classes of   $V$-regular maps $(f,C;\{q^i_j\})$ with  contact vectors $m^1,\cdots,m^k$. Here
$\chi(C)=\chi$ and all marked points are contact marked points.
Since no confusion can arise,
we will often  write  $(f,C;\{q^i_j\})$ simply as $f$.

Spin Hurwitz numbers are associated with those moduli spaces (\ref{RelativeModuli}) that have (formal) dimension 0.   Thus we will henceforth assume that
\begin{equation}\label{DimRelModuli}
\mbox{dim}_\cx\, \M^{V}_{\chi,m^1,\cdots,m^k}(D,d)\ =\
2d(1-h) -\chi  - \sum_{i=1}^k \big(\,d-\ell(m^i)\,\big)\ =\ 0.
\end{equation}
With this assumption,  all ramification points of a $V$-regular map $(f,C;\{q^i_j\})$ in (\ref{RelativeModuli}) are contact marked points.
In this case, forgetting the contact marked points  gives  a (ramified) covers $f$ satisfying (\ref{introdim0}).
If $m^i=(1^d)$ for some $1\leq i\leq k$ then
\begin{equation}\label{SHbyRelModuli}
H^{h,p}_{m^1,\cdots,m^k}\ =\ \frac{1}{\prod m^i!}\,\sum \,p(f) 
\end{equation}
the sum is over all  $f$ in (\ref{RelativeModuli}) and
$p(f)$ is the associated parity (\ref{introparity}) (cf. Lemma~1.1 of \cite{L}).

Adding trivial partitions does not change the formulas (\ref{introdim0}) and (\ref{DimRelModuli}). It also does not change
the spin Hurwitz numbers, namely,
\begin{equation}\label{trivial}
H^{h,p}_{(1^d),m^1,\cdots,m^k}\ =\ H^{h,p}_{m^1,\cdots,m^k}.
\end{equation}
Below, we fix $h$, $d$, $\chi$ and odd partitions $m^1,\cdots,m^k$ of $d$
so that the dimension formula (\ref{DimRelModuli}) holds.
In light of (\ref{SHbyRelModuli}), we will  add trivial partitions $m^{k+1}=m^{k+2}=m^{k+3}=(1^d)$ to make our discussion simpler.

\medskip

To adapt the main argument of \cite{IP2} we will build a  degeneration of target curves. Let $D_0=D_1\cup E \cup D_2$ be a connected nodal curve of arithmetic genus $h$ of a rational curve $E$ and smooth curves $D_1$  and $D_2$ of genus $h_1$ and $h_2$ with $h_1+h_2=h$,  joined at nodes $p^1=D_1\cap E$ and $p^2=D_2\cap E$. Fix points $k+3$ points $q^i$, all distinct and distinct from $p^1$ and $p^2$, with
\begin{equation*}\label{3.qpoints}
q^{k+1},q^1,\cdots,q^{k_0}\in  D_1, \qquad
q^{k+2}\in E, \qquad
q^{k_0+1},\cdots,q^k,q^{k+3}\in   D_2.
\end{equation*}
 where $0\leq k_0\leq k$.
In Section~\ref{section5}, we will construct a deformation of $D_0$  with sections;  it is a smooth complex surface $\D$ fibered over the disk $\Delta$ with parameter $r$
\begin{equation}\label{deg-target}
\xymatrix{
 \D \ar[d]^{\rho} \\
 \Delta \ar@/^1pc/[u]^{Q^i}    }
\end{equation}
so that the central fiber is $D_0$, the   fibers $D_r$ with $r\ne 0$ are smooth curves of genus $h$ and $Q^i(0)=q^i$ for  $1\leq i\leq k+3$.

For each partition $m$ of $d$,  consider the moduli  space of maps
\begin{equation}\label{central-moduli}
\CP_m = \M^{V_1}_{\chi_1, m^{k+1},m^1,\cdots,m^{k_0},m}(D_1,d) \times  \M^{V_e}_{\chi_e, m,m^{k+2},m}(E,d) \times  \M^{V_2}_{\chi_2, m,m^{k_0+1},\cdots,m^k,m^{k+3}}(D_2,d)
\end{equation}
where $V_1=\{q^{k+1},q^1,\cdots,q^{k_0},p^1\}$, $V_e=\{p^1,q^{k+2},p^2\}$, $V_2=\{p^2,q^{k_0+1},\cdots,q^k,q^{k+2}\}$ and
\begin{equation}\label{SumEulerC}
\chi_1+\chi_e+\chi_2-4\ell(m)\ =\ \chi.
\end{equation}
For simplicity, let $\M^1_m,\M_m^e$ and $\M_m^2$ denote the first, second and third factors of $\CP_m$.
By (\ref{SumEulerC}) and our assumption that the  dimension formula (\ref{DimRelModuli}) holds, it is easy to see that whenever
 the  space $\CP_m$ is not empty,   the relative moduli spaces  $\M^1_m,\M_m^e$ and $\M_m^2$
all have dimension zero. In particular, $\chi_e=2\ell(m)$ and
\begin{equation}\label{NumM0}
|\M_m^e|\ =\ \frac{d! \,m!}{|m|}
\end{equation}
where $|\M_m^e|$ denotes the cardinality of  $\M_m^e$ (cf. Section 2 of \cite{L}).

For $(f_1,f_e,f_2) \in \CP_m$, let $x^i_j$ and $y^i_j$ be contact marked points
of $f_i$ and $f_e$ over $p^i\in D_i\cap E$ with multiplicity $m_j$ where $i=1,2$ and $j=1,\cdots,\ell(m)$. By identifying $x^i_j$ with $y^i_j$,
one can glue the domains of $f_i$ and $f_e$ to obtain a map $f:C\to D_0$ with $\chi(C)=\chi$.
For notational convenience, we will often write the glued map $f$ as $f=(f_1,f_e,f_2)$.
Denote by
\begin{equation}\label{CenterModuli}
\M_{m,0}
\end{equation}
the space of such maps $f=(f_1,f_e,f_2)$.
Identifying contact marked points associates to each node of $C$ a multiplicity $m_j$ labeled by $j$.
But the nodes of $C$ are not labeled.  One can thus  see that
gluing domains gives a degree $(m!)^2$ covering map:
\begin{equation}\label{GluingMap}
\CP_m\ \to\ \ \M_{m,0}.
\end{equation}

\begin{remark}\label{RamificationIndex}
Let $f=(f_1,f_e,f_2)$ be a map in $\M_{m,0}$. For $i=1,2$, since $\M^i_m$ has dimension zero,
(i) the  ramification points of $f_i$ are either contact marked points or nodal points of the domain of $f$,
(ii) $f_i$ can have even ramification points only at nodal points and
(iii) the number of even ramification points of $f_i$ is even.
\end{remark}

For $r\not= 0$, consider the  moduli spaces of $V$-regular  maps into $D_r$, which we denote by
\begin{equation}\label{def-Mr}
\M_r\ =\ \M^{V_r}_{\chi,m^1,\cdots,m^{k+3}}(D_r,d)\ \ \ \ \
\mbox{where}\ \ \ \ \ V_r=\{Q^1(r),\cdots,Q^{k+3}(r)\},
\end{equation}
By Gromov convergence, a sequence of holomorphic maps into $D_r$ with $r\to 0$ limits has a subsequence that converges to a  map into $D_0$.
Denote the set of such  limits by
\begin{equation*}\label{LimitSet}
\lim_{r\to 0} \M_r.
\end{equation*}
Lemma~3.1 of \cite{L} shows that
\begin{equation}\label{LimitModuli}
\lim_{r\to 0}\ \M_r \ \subset \ \bigcup_m\  \M_{m,0}
\end{equation}
where the union is over all partitions $m$ of $d$ with $\CP_m\ne \emptyset$.


Conversely, by the Gluing Theorem of \cite{IP2}, the domain of each map in $\M_{m,0}$ can be smoothed to produce
maps in $\M_r$ for small $|r|$.
Shrinking $\Delta$ if necessary, for $r\in \Delta$,   one can assign
to each $f_r\in \M_r$ a partition $m$ of $d$ by (\ref{LimitModuli}).
Let $\M_{m,r}$  be the set of all  pairs $(f_r,m)$ and for each $f\in\M_{m,0}$ denote by
\begin{equation}\label{ComFModuli}
\CZ_{m,f}\ \to\ \Delta
\end{equation}
the connected component of $\bigcup_{r\in\Delta} \M_{m,r}\to \Delta$ that contains $f$.
It follows that
\begin{equation}\label{DecMr}
\M_r\ =\ \bigsqcup_m \bigsqcup_{f_r\in \M_{m,0}} \CZ_{m,f,r}\ \ \ \ \ \ (r\ne 0)
\end{equation}
where $\CZ_{m,f, r}$ is the fiber of (\ref{ComFModuli}) over $r\in \Delta$.
The Gluing Theorem shows that one can smooth each node $x^i_j=y^i_j$ of the domain of $f$, where $i=1,2$ and $j=1,\cdots,\ell(m)$,
in $m_j$ ways to produce $|m|^2$ maps in $\M_{m,r}$, i.e., the fiber $\CZ_{m,f, r}$ ($r\ne 0$) consists of $|m|^2$ maps.

\bigskip
We now introduce a spin structure on $\rho:\D\to\Delta$ assuming that $\D$ is smooth.  Given parities $p$, $p_1$ and $p_2$ with $p_1+p_2=p$ (mod 2),
 Cornabla's  \cite{C} constructs  a
line bundle $\L\to\D$ and  a homomorphism
\begin{equation}
\label{homomorphism}
\Phi:\L^2\to K_\D
\end{equation}
whose restrictions satisfy the following properties:
\begin{itemize}
\item[(a)]
For $r\ne 0$,  $\L$ restricts to a  theta characteristic  on $D_r$ with a parity $p$ and  $\Phi$ restricts to an isomorphism $(\L|_{D_r})^2\to K_{D_r}$.
\item[(b)]
$\Phi$ vanishes identically on $E$ and $\L|_E =\O_E(1)$.
\item[(c)]
For $i=1,2$, $\L$ restricts to a  theta characteristic  on $D_i$ with  parity $p_i$,  and   $\Phi$ restricts to an isomorphism  $(\L|_{D_i})^2\to K_{D_i}$.
\end{itemize}
The pair $(\L,\Phi)$ is called a spin structure on $\rho:\D\to\Delta$.

Let $f=(f_1,f_e,f_2)$ be a map in $\M_{m,0}$.
Note that all ramification points of  maps  in $\CZ_{m,f,r}$  ($r\ne 0$) have odd ramification indices
since $m^1,\cdots,m^k$ are odd partitions. So, each map $f_r$  in $\CZ_{m,f,r}$ has an associated parity $p(f_r)$
 defined as in (\ref{introparity}) by the pull-back bundle $f_r^*(\L|_{D_r})$ and its ramification divisor ${\cal R}_{f_r}$.
When the partition $m$ is  odd, $f_i$ ($i=1,2$) also have  associated parities $p(f_i)$ defined by
$f_i^*(\L|_{D_i})$ and ${\cal R}_{f_i}$.  In this context,  (\ref{SHbyRelModuli}), (\ref{trivial}) and (\ref{DecMr}) shows that for
$r\ne 0$  we have
\begin{equation}\label{MThmE1}
H_{m^1,\ldots,m^k}^{h,p}\ =\ H_{m^1,\ldots,m^k,(1^d),(1^d),(1^d)}^{h,p}\ =\
\frac{1}{(d!)^3\prod_{i=1}^k m^i!}\,\sum_m \, \sum_{f\in \M_{m,0}} \,\sum_{f_r\in \CZ_{m,f,r}}\,p(f_r).
\end{equation}

In Sections~\ref{section5}-\ref{section10} we will establish the following facts about the parity.

\begin{theorem}\label{MainTask}
Let $f=(f_1,f_e,f_2)\in \M_{m,0}$ and $r\ne 0$.
\begin{itemize}
\item[(a)] If $m$ is odd, then $p(f_r)=p(f_1)\,p(f_2)$ for all $f_r\in \CZ_{m,f,r}$.
\item[(b)] If $m$ is even, then ${\displaystyle \sum_{f_r\in \CZ_{m,f,r}}\,p(f_r) =0}$.
\end{itemize}
\end{theorem}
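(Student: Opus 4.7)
\medskip

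\noindent\textbf{Proof proposal.} My plan is to install a smoothly varying family of operators on the universal domain over $\CZ_{m,f}\to\Delta$ and then use Theorem~\ref{Injective-Esp} to express the parity as a signed determinant that I can track as $r\to 0$. Concretely, pull back $\L$ along the universal map $F:\C\to\D$ and twist by half of the universal ramification divisor to obtain a line bundle $\N\to\C$ whose restriction to each fiber $r\ne 0$ is $N_{f_r}$; the pulled-back homomorphism $F^*\Phi$ then provides, via Corollary~\ref{2.tRfcor}, an antilinear bundle map $R_F$ and hence a family $L_t=\del+tR_F$. Theorem~\ref{Injective-Esp} applied fiberwise gives
\begin{equation*}
p(f_r)\ =\ \sgn\det\bigl(L_t|_{E_{t,r}\to F_{t,r}}\bigr)
\end{equation*}
for all sufficiently small $t>0$ and $r\ne 0$ in a neighborhood of $0$, where $E,F$ are the low eigenbundles of Section~\ref{section3}. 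The task reduces to analyzing this sign through the degeneration $r\to 0$ using the structure of the central fiber $C_0=C_1\cup C_e\cup C_2$.

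For part (a), since $m$ is odd every node of $C_0$ has odd multiplicity, so $N_{f_i}=f_i^*\L\otimes\O(\tfrac12{\cal R}_{f_i})$ is a bona fide theta characteristic on each $C_i$ and Theorem~\ref{Injective-Esp} also computes $p(f_1)$ and $p(f_2)$ as signed determinants of analogous operators. I would use node-smoothing perturbation theory on the (smooth, odd-case) family $\CZ_{m,f}\to\Delta$ to identify $E_{t,r}$ for small $r\ne 0$ with a direct sum $E^{(1)}\oplus E^{(e)}\oplus E^{(2)}$ up to controllable error; the block determinants for $f_1$ and $f_2$ are then exactly $p(f_1)$ and $p(f_2)$. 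The block associated to $f_e$ has determinant $+1$ because each connected component of $f_e$ is an odd-degree cover $\P^1\to\P^1$ whose twisted pullback is $\O(2m_j-1)$ with $h^0=2m_j$ even. Multiplying the three pieces and propagating along the connected family $\CZ_{m,f,r}$ then gives $p(f_r)=p(f_1)p(f_2)$.

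For part (b) some $m_j$ is even, so $N_{f_i}$ is not a theta characteristic across the corresponding nodes and the individual parities $p(f_1)$, $p(f_2)$ are undefined. Instead, I would construct a fixed-point-free sign-reversing involution on $\CZ_{m,f,r}$. By the Gluing Theorem, the $|m|^2$-fiber of $\CZ_{m,f}\to\Delta$ over $r\ne 0$ is a torsor for $\prod_j(\Z/m_j)\times\prod_j(\Z/m_j)$ acting by roots of unity on the smoothings, and for any even $m_j$ the element $-1\in\Z/m_j$ is a free involution. To see that it negates $p(f_r)$, I would invoke the localization principle described in Step~5 of the introduction: as $t\to\infty$ the low eigenfunctions of $L_t^*$ concentrate into explicit bumps near each even-ramification node, so that $\det L_t$ reduces asymptotically to a determinant of $L^2$ pairings of these bumps, and the dependence of a bump on its gluing parameter $\zeta$ is through a character of odd weight. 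Replacing $\zeta$ by $-\zeta$ therefore produces an odd number of sign changes in the matrix and multiplies $\det L_t$ by $-1$. Pairing smoothings via the involution yields the cancellation.

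The main obstacle I foresee is the analysis underlying part (b): proving rigorously that the low eigenfunctions of $L_t^*$ localize with the claimed transformation law under the cyclic action on gluing parameters, so that $\zeta\mapsto-\zeta$ actually flips the sign of $\det L_t$. By contrast the direct-sum splitting in part (a) looks like standard node-smoothing perturbation theory, and the $f_e$-contribution reduces to a routine Riemann--Roch bookkeeping.
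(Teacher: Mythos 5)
Your outline follows the paper's own route (parity as the sign of $\det L_t$ on low eigenspaces via Theorem~\ref{Injective-Esp}, a block analysis at the central fiber for odd $m$, and localization plus a root-of-unity involution for even $m$), but as written two load-bearing steps fail. First, the family and bundle you start from do not exist in the form you use them. Over $r=0$ the universal domain of $\CZ_{m,f}\to\Delta$ is singular and has $|m|^2$ branches, so one must base-change/normalize (this is where the index set $Q_m$ of roots of unity enters) and then blow up, which inserts chains of rational curves into the central fiber (Theorem~\ref{TwistingProp}b). Moreover ``half the universal ramification divisor'' is not the right twist: the ramification divisor of the family map contains the vertical term $|m|\,C_{\zeta,0}$, for even $m_j$ the divisor $\tfrac12 {\cal R}_{f_i}$ is not even integral on $C_i$, and any extension of the fiberwise theta characteristics is only determined up to twists by central components, so the paper must make the specific choice (\ref{defnofA}) to get Theorem~\ref{TwistingProp}c,d. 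With your naive central-fiber model the component $E_j$ carries $\O(2m_j-1)$ rather than $\O(1)$ or $\O$, and this breaks part (a): since $\Phi$ vanishes identically on $E$, the map $R$ vanishes identically on every component lying over $E$, so formula (\ref{3.SFformula2}) (which needs $\dot L_0=R$ to be an isomorphism from kernel to cokernel) cannot be applied to your $f_e$-block, and in fact for odd $m_j\geq 3$ your bundle has a large space of sections supported on $E_j$ (killed at the two nodes, $h^0=2m_j-2>0$) whose behavior under smoothing is not controlled by any $(-1)^{h^0}$ rule. The paper's twist is exactly what eliminates these modes: with Theorem~\ref{TwistingProp}d one gets $W=0$ for odd $m$ (Lemma~\ref{WjLemma}), and the parity at $s=0$ is computed purely from $H^0(C_1,N_1)\oplus H^0(C_2,N_2)$, where $tR_0$ is an anti-linear isomorphism (Lemma~\ref{oddLemma}).

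For part (b), the involution $\zeta_j\mapsto-\zeta_j$ on $Q_m$ is indeed the paper's mechanism (Section~\ref{section10}), but the step you defer is the entire content: one needs the block-diagonalization of $\det L_t$ over the even nodes (Theorem~\ref{evenThm}, which itself rests on the eigenbundle estimates of Theorem~\ref{Elowtrivial}), the uniform-in-$s$ bump-function description of the low eigensections of $L_tL_t^*$ (Theorem~\ref{concentrationC2}), and the explicit asymptotics showing each $2\times2$ block has determinant $a\,a_2\,|s|^2\bigl(\Re(b\bar b_2\,\zeta_j\overline{\zeta'_j})+O(1/\sqrt t)\bigr)$, so that flipping $\zeta_j$ flips exactly one sign; one must also perturb $R$ (end of Section~\ref{section8}) to guarantee the leading term $\Re(b\bar b_2\,\zeta_j\overline{\zeta'_j})$ is nonzero, without which the sign is indeterminate and the pairing argument yields nothing. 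So the proposal reproduces the paper's architecture, but the two essential ingredients --- the Cornalba-type family with its specific twist, and the quantitative localization computation with its non-vanishing --- are missing, and the one block you do compute in (a) is computed with the wrong limit bundle and an inapplicable sign rule.
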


We conclude this section by showing how  Theorem~\ref{Main}a  follows from
Theorem~\ref{MainTask}.

\medskip
\non
{\em Proof of Theorem~\ref{Main}a:}
Together with  (\ref{MThmE1}), Theorem~\ref{MainTask} shows
\begin{equation}\label{MThmE2}
H_{m^1,\ldots,m^k}^{h,p}\ =\
\frac{1}{(d!)^3\prod_{i=1}^k m^i!}\,\sum_{m:odd} \, |m|^2\,\sum_{f=(f_1,f_e,f_2)\in \M_{m,0}} \hspace{-1.5em} p(f_1)\, p(f_2)
\end{equation}
where the factor $|m|^2$ appears because the fiber $\CZ_{m,f, r}$ ($r\ne 0$) consists of $|m|^2$ maps.
Since the map (\ref{GluingMap}) has degree $(m!)^2$, the last sum in (\ref{MThmE2}) is
\begin{align}\label{MThmE3}
\sum_{f=(f_1,f_e,f_2)\in \M_{m,0}} \hspace{-1.5em}p(f_1)\,p(f_2)\
&=\
\frac{1}{(m!)^2}\,\sum_{(f_1,f_e,f_2)\in \CP_m }   \hspace{-1.5em}  p(f_1)\,p(f_2) \notag  \\
&=\
\frac{1}{(m!)^2}\,\sum_{f_e\in \M_m^e}\Big(\,\sum_{f_1\in\M_m^1} p(f_1)\,\Big)\cdot \Big(\,\sum_{f_2\in\M_m^2} p(f_2)\,\Big)
\notag \\
&=\
\frac{(d!)^3 m!}{|m|}\,\prod_{i=1}^{k} m^i\cdot H^{h_1,p_1}_{m^1,\cdots,m^{k_0},m} \cdot H^{h_2,p_2}_{m,m^{k_0+1},\cdots,m^k}
\end{align}
where the last equality holds by (\ref{SHbyRelModuli}) and (\ref{NumM0}).  Theorem~\ref{Main}a follows from equations
(\ref{MThmE2}) and (\ref{MThmE3}).

\medskip

The proof of Theorem~\ref{Main}b is identical to that of Theorem~\ref{Main}a except that one uses
 a smooth  family of target curves $\D\to \Delta$  and a line bundle $\L\to \D$
satisfying:
\begin{itemize}
\item
The general fiber $D_r$ ($r\ne 0$) is a smooth curve of genus $h\geq 1$ and $\L|_{D_r}$ is a theta characteristic.
\item
The central fiber of $\D\to \Delta$ is a connected nodal curve $\bar{D}\cup E$ where  $\bar{D}$ is a smooth genus $h-1$ curve that meets
$E\cong {\mathbb P}^1$ at two points.
\item
 $\L$ restricts to $\O(1)$ on $E$ and to a theta characteristic  on $\bar{D}$  with
$p(\L|_{\bar{D}})\equiv p(\L|_{D_r})$ for $r\ne 0$.
\end{itemize}
Minor modifications to  the arguments of this section and to the constructions and calculations in Sections~\ref{section5}-\ref{section10},
yield parity formulas analogous to Theorem~\ref{MainTask}, which leads to Theorem~\ref{Main}b.
\qed

\vskip 1cm


\setcounter{equation}{0}
\section{The algebraic family moduli space}
\label{section5}
\bigskip

 In this section we  construct
  a  deformation of a map $f:C\to D_0$  from a nodal  curve to a nodal spin curve.
  The deformation has many components, indexed by roots of unity.
   Each component is a curve $\C\to \Delta$ over the disk with smooth total space,
   with a map to a deformation $\D\to\Delta$ of $D_0$ and a bundle $\N\to\C$ whose restriction
   to each general fiber $C_s$ is a theta characteristic on $C_s$.  In fact, there are  many
  such   bundles $\N$;   we fix one that makes computations in later sections possible.

\medskip

Throughout this section we fix, once and for all, a partition $m=(m_1,\cdots,m_\ell)$ of $d$,
a map $f=(f_1,f_e,f_2):C\to D_0$ in $\M_{m,0}$ where $\M_{m,0}$ is the space (\ref{CenterModuli}), and the spin structure $(\L, \Phi)$ on $\rho:\D\to \Delta$  in (\ref{homomorphism}).
As  in Section~\ref{section4},
$D_0$ is a nodal curve $D_1 \cup E \cup D_2$ with exceptional component $E={\mathbb P}^1$  and with   nodes $p^1\in D_1\cap E$ and
$p^2\in D_2\cap E$.
The domain $C$ is a nodal curve $C=C_1\cup C_e\cup C_2$
with $2\ell$ nodes where $\chi(C_e)=2\ell$ such that for $i=1,2$ and $j=1,\cdots,\ell$
\begin{itemize}
\item
$f^{-1}(p^i)$   consists of the $\ell$ nodes $p^i_j\in C_i\cap E_j$,
\item
$C_i$ is smooth and $f_i=f|_{C_i}:C_i\to D_i$ has ramification index $m_j$ at the node $p^i_j$,
\item
$C_e$ is a disjoint union of $\ell$ rational curves $E_j$, $f_e=f|_{C_e}$ and
each restriction ${f}|_{E_j}:E_j\to E$ has degree $m_j$ and  ramification index $m_j$ at  $p^i_j$.
\end{itemize}
For $i=1,2$, let ${\cal R}_{f_i}$  denote ramification divisor of $f_i$, and
let ${\cal R}_{f_i}^{ev}$ be  the  divisor on $C_i$  consisting of the {\em even} ramification points:
\begin{equation}\label{SquareRam}
{\cal R}_{f_i}^{ev}\ =\ \sum_{j\, |\, \mbox{\scriptsize $m_j$ is even}} p^i_j.
\end{equation}
By Remark~\ref{RamificationIndex},  $|{\cal R}_{f_i}|$ and $|{\cal R}_{f_i}^{ev}|$ are both even. For $j=1,\cdots,\ell$, we  set
\begin{equation}\label{NumberOfchain}
n_j\ =\ \frac{|m|}{m_j}.
\end{equation}

 Let $Q_m$ denote the set of vectors of the form
$\zeta=(\zeta_1^1, \zeta_1^2, \cdots \zeta_\ell^1, \zeta_\ell^2)$
where $\zeta_j^1$ and $\zeta_j^2$ are $m_j$-th roots of unity.
The following is a main result of this section.

\begin{theorem}\label{TwistingProp}
 Let $f=(f_1,f_e,f_2)\in\M_{m,0}$ and $Q_m$ be as above.
Then, for each vector $\zeta\in Q_m$,
there exists a family of curves $\C_\zeta\to \Delta$,over a disk $\Delta$ (with parameter $s$) with smooth total space $\C_\zeta$,  a holomorphic map $\F_\zeta:\C_\zeta\to\D$  and a line bundle $\N_\zeta$ over $\C_\zeta$
satisfying:
\begin{itemize}
\item[(a)]
For $s\not= 0$, the  fiber $C_{\zeta,s}$  is smooth and the restriction $N_{\zeta,s}=\N_\zeta|_{C_{\zeta,s}}$ is a theta characteristic on $C_{\zeta,s}$ and the restriction map $f_{\zeta,s}=\F|_{C_{\zeta,s}}$ has the associated parity $p(f_{\zeta,s}) = p(N_{\zeta,s})$
such that the last sum in (\ref{MThmE1}) is
\begin{equation}\label{Summary-Shiffer}
\sum_{f_r\in \CZ_{m,f,r}}\!\!p(f_r)\ =\
\sum_{ \zeta\in Q_m} \ p(f_{\zeta,s})
\qquad \mbox{where $r=s^{|m|}$}.
\end{equation}

\item[(b)]
The central fiber  $C_{\zeta,0}$ is a nodal curve $C_1 \cup (\,\cup_{j=1}^\ell \bar{E}_j\, )\cup C_2$  where each $\bar{E}_j$ is a chain of rational curves with dual graph
{\small
\begin{equation}\label{Adiagram}
  \begin{tikzpicture}[start chain]
\LabeledCircle{}{$C_1$}
\LabeledCircle{}{\ \ $E^1_{j;n_j-1}$}
\DiagramDots
\LabeledCircle{}{$E_{j;1}^1$}
\LabeledCircle{}{$E_j$}
\LabeledCircle{}{$E_{j1}^{2}$}
\DiagramDots
\LabeledCircle{}{\ $E^2_{j; n_j-1}$}
\LabeledCircle{}{$C_2$}
\end{tikzpicture}
\end{equation}   }

\item[(c)]
$ \N_\zeta|_{C_i}= f_i^*(\L|_{D_i})\otimes\ \O\big(\,\tfrac12({\cal R}_{f_i}-{\cal R}_{f_i}^{ev})\,\big)$\ \ for $i=1,2$.
\item[(d)]  $ \N_\zeta|_{E^i_{j;k}} =
\begin{cases}
\O(1)  &\mbox{if\ $m_j$\ is\ even\ and\ $k=n_j-1$, \ and\ if\ $m_j$\ is\ odd\ and\ $k= 0$}\ \\
\O  &\mbox{otherwise.  } \\
\end{cases}$
\end{itemize}
Here, for the case $k=0$,  $E^1_{j;0}=E^2_{j;0}$ denotes $E_j$.  Note that $n_j>1$ whenever $m_j$ is even (because $|{\cal R}_{f_i}^{ev}|$ is even).
\end{theorem}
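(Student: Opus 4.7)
Taking coordinates $s$ on the base of $\mathcal{C}_\zeta$ and $r$ on the base of $\mathcal{D}$, the plan is to construct $\mathcal{F}_\zeta : \mathcal{C}_\zeta \to \mathcal{D}$ as a resolution of singularities of the local family cut out by $f$ over the base change $r = s^{|m|}$. Near each node $p^i_j$ of $C$, choose local coordinates with $C = \{uv = 0\}$ and $f(u,v) = (u^{m_j}, v^{m_j})$ landing in a local model $\{xy = 0\}$ of $D_0$ near $p^i$. A flat smoothing of the node has local equation $uv = w$ for some parameter $w$, and for $f$ to extend into $\{xy = r\} \subset \mathcal{D}$ this forces $w^{m_j} = r = s^{n_j m_j}$, i.e.\ $uv = \zeta\, s^{n_j}$ with $\zeta^{m_j} = 1$. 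This yields $m_j$ smoothings per node, and $\prod_{i,j} m_j = |m|^2$ smoothings in total, which is exactly $|Q_m|$.

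For each $\zeta \in Q_m$, the local equation $uv = \zeta s^{n_j}$ defines an $A_{n_j-1}$ surface singularity, and I take $\mathcal{C}_\zeta$ to be its standard toric resolution. The resolution inserts a chain of $n_j - 1$ rational $(-2)$-curves between the proper transform of $C_i$ and that of $E_j$ in the central fiber, giving the dual graph of (b). The map $\mathcal{F}_\zeta$ extends across the resolution because $\mathcal{D}$ is already smooth, each inserted exceptional curve being contracted to $p^i$. The Gluing Theorem statement following (\ref{DecMr}) says that $|\mathcal{Z}_{m,f,r}| = |m|^2$, so the $|m|^2$ elements of $Q_m$ index the elements of $\mathcal{Z}_{m,f,r}$ when $r = s^{|m|}$; formula (\ref{Summary-Shiffer}) will then follow once $\mathcal{N}_{\zeta,s}$ is identified with the theta characteristic on $C_{\zeta,s}$ canonically attached to $f_{\zeta,s}$.

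For the line bundle I would begin with $\mathcal{F}_\zeta^* \mathcal{L}$, whose square agrees with $\mathcal{F}_\zeta^* K_\mathcal{D}$ away from $E$ via $\Phi$, and twist by a half-integral divisor $\tfrac12 \mathcal{T} + A_\zeta$. The horizontal twist $\mathcal{T} = \mathcal{R}_{\mathcal{F}_\zeta} - \mathcal{R}^{ev}_{\mathcal{F}_\zeta}$ (the odd part of the ramification divisor of $\mathcal{F}_\zeta$) is arranged so that its restriction to $C_i$ matches (c). The vertical twist $A_\zeta = \sum a^i_{j,k} E^i_{j,k}$ is supported on the inserted chain, and imposing the degree conditions (d) on each $E^i_{j,k}$ yields a tridiagonal linear system in the $a^i_{j,k}$ that can be solved inductively along each chain using $\mathcal{L}|_E = \mathcal{O}(1)$, $\Phi|_E = 0$, and standard intersection relations for chains of $(-2)$-curves. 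The odd/even case split in (d) records where the unit of degree removed by subtracting $\mathcal{R}^{ev}$ is re-deposited on the chain: onto $E^i_{j,n_j-1}$ for even $m_j$, but onto $E_j$ itself (via $\mathcal{L}|_E = \mathcal{O}(1)$) for odd $m_j$.

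The hardest step will be the global consistency check: the vertical twist $A_\zeta$ forced by (c) and (d) must simultaneously make $\mathcal{N}_\zeta^2 \cong K_{\mathcal{C}_\zeta/\Delta}$ on every smooth fiber, so that $\mathcal{N}_{\zeta,s}$ is genuinely a theta characteristic. This reduces to a degree count on each irreducible component of $C_{\zeta,0}$, a case analysis that depends on the parity of each $m_j$ and on the detailed geometry of the toric resolution. Once this is done, $\mathcal{N}_{\zeta,s}$ on a smooth fiber is forced to coincide with the classical twisted pullback $f_{\zeta,s}^*(\mathcal{L}|_{D_r}) \otimes \mathcal{O}\bigl(\tfrac12 \mathcal{R}_{f_{\zeta,s}}\bigr)$ defining $p(f_{\zeta,s})$, yielding $p(\mathcal{N}_{\zeta,s}) = p(f_{\zeta,s})$ and therefore (a).
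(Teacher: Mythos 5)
Your construction of $\C_\zeta$, $\F_\zeta$ and the general shape of $\N_\zeta$ follows the paper's route: after the base change $r=s^{|m|}$ the local model at each node is $x y=\zeta^i_j s^{n_j}$, resolving the resulting $A_{n_j-1}$ singularities (the paper blows up $n_j-1$ times rather than quoting the toric resolution) inserts the chains of (b), the count $|Q_m|=|m|^2=|\CZ_{m,f,r}|$ identifies the smoothings indexed by $Q_m$ with $\CZ_{m,f,r}$ (this is Lemma~\ref{modulispacesequal}), and $\N_\zeta$ is $\F_\zeta^*\L$ twisted by half a horizontal divisor plus a vertical divisor supported on the chains, which the paper simply writes down explicitly in (\ref{defnofA}) rather than solving a chain system.

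The genuine gap is in your plan for part (a). The step you single out as hardest --- that the vertical twist forced by (c) and (d) still gives a theta characteristic on every smooth fiber --- cannot be settled by a degree count on the irreducible components of $C_{\zeta,0}$: degrees on central-fiber components say nothing directly about the restriction of $\N_\zeta$ to a nearby smooth fiber. The fact you are missing is elementary and makes (a) immediate: every component $\chi$ of the central fiber is disjoint from $C_{\zeta,s}$ for $s\ne 0$, so $\O(\chi)|_{C_{\zeta,s}}=\O$ (the first identity in (\ref{4.HM})); hence the entire vertical twist disappears on restriction, giving $\N_\zeta|_{C_{\zeta,s}}=f_{\zeta,s}^*(\L|_{D_r})\otimes\O\big(\tfrac12 Q|_{C_{\zeta,s}}\big)$, which is exactly the twisted pullback (\ref{thetaonC}) because the ramification divisor of $f_{\zeta,s}$ is $Q|_{C_{\zeta,s}}$ (and $\tfrac12 Q$ is integral precisely because the partitions $m^i$ are odd). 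Relatedly, your horizontal twist cannot by itself account for (c): the ramification divisor of the surface map is ${\cal R}_{\F_\zeta}=Q+|m|\,C_{\zeta,0}$, whose horizontal part $Q$ misses the nodes $p^i_j$ entirely, so the nodal coefficients $\tfrac12(m_j-1)$ (odd $m_j$) and $\tfrac12(m_j-2)$ (even $m_j$) appearing in (c) must be supplied by the vertical divisor through $\O(E^i_{j;n_j-1})|_{C_i}=\O(p^i_j)$; this is exactly how the coefficients in (\ref{defnofA}) are chosen, and it also explains why (c) and (d) are simultaneously solvable --- they determine the vertical divisor only up to multiples of the full fiber, which is linearly trivial.
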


\bigskip

The proof of Theorem~\ref{TwistingProp} requires  6 steps; each is a standard procedure in algebraic geometry.  Steps~1-4 use  Shiffer Variations  (cf. \cite{ACGH})  and  are described in detail in \cite{L}.

\medskip
\non
{\bf Step 1 -- Deform the target:}  As in (\ref{deg-target}) there is an algebraic curve
 $\rho:\D\to \Delta$ over the disk $\Delta$ with $k+3$ sections $Q^i$ whose central fiber  is identified with $D_0$ with the marked points $q^i=Q^i(0)$.  Denoting the coordinate on $\Delta$ by $r$, there are local coordinates $(u^1, v^1, r)$ and $(u^2, v^2, r)$ around the nodes $p^1$ and $p^2$ in $\D$ so that the fiber $D_r=\rho^{-1}(r)$ is locally given by $u^1v^1=r$ and $u^2v^2=r$.

\bigskip
\non
{\bf Step 2 -- Deform the domain:}  A similar construction yields a  deformation $\varphi_{2\ell}: \X \ \to\  \Delta_{2\ell}$ of $C_0$ over  polydisk
\begin{equation}\label{def-domain}
\Delta_{2\ell}\,=\,\{\,\r=(r^1_1,r^2_1,\cdots,r^1_\ell,r^2_\ell)\in\cx^{2\ell} \,:\,|r^i_j|<1\,\}.
\end{equation}
Furthermore,  there are local coordinates $(x^i_j,y^i_j,\r)$ around each node $p^i_j$ of $C_0$ in $\C$  in which the fiber $C_{\bf r}$ of $\rho$ over $r$ is given by $x^i_jy^i_j=r^i_j$.

\bigskip
\non
{\bf Step 3 -- Extend the map:}\ \ The map $f:C\to D_0$ extends to a map of families over  the curve ${\cal V}\subset \Delta_{2\ell}$ defined by
\begin{equation}\label{matching}
{\cal V}\ =\ \Big\{(r^1_1)^{m_1} = (r^2_1)^{m_1} = \ \cdots\ =(r^1_\ell)^{m_\ell} = (r^2_\ell)^{m_\ell} =r\ \Big|\ r\in\cx\Big\}.
\end{equation}
Near the nodes of $C_0$ the extension is given on $\varphi_{2\ell}^{-1}({\cal V})$  by
\begin{equation}
\label{LocalMap2}
(x^i_j,y^i_j,\r)\ \to\ (u^i, v^i, r) \qquad\mbox{where}\  u^i=(x^i_j)^{m_j}, v^i=(y^i_j)^{m_j},  r=(r^i_j)^{m_j}.
\end{equation}
  Note that this extension maps fibers to fibers only over  ${\cal V}$.

\bigskip
\non
{\bf Step 4 -- Normalization:}\ \   The one-dimensional variety (\ref{matching})  has  $|m|^2$ branches at the origin.
To separate the branches, we lift to the normalization  as follows.   For each vector  $\zeta=(\zeta_1^1,\zeta_1^2,\cdots,\zeta_\ell^1,\zeta_\ell^2)$ in  $Q_m$,
define a holomorphic map
\begin{equation*}\label{zeta-map}
\delta_\zeta : \Delta\to \Delta_{2\ell}\ \ \ \ \
\mbox{by}\ \ \ \ \
s\ \to\ (\zeta_1^1s^{n_1},\zeta^2_1s^{n_1}, \zeta_2^1s^{n_2},\zeta^2_2s^{n_2},\cdots,
\zeta_\ell^1 s^{n_\ell},\zeta_\ell^2 s^{n_\ell})
\end{equation*}
where $n_j$ is the number (\ref{NumberOfchain}).
The pull-back $\X_\zeta=\delta_\zeta^*\X$ is  a deformation
of $C$ over  $\Delta$:
$$
\xymatrix{
 \X_\zeta \ar[rr]  \ar[d]_{\wh{\varphi}_\zeta}
 &&  \X  \ar[d]^{\varphi_{2\ell}} \\
 \Delta  \ar[rr]^{\delta_\zeta}
 && \V\subset \Delta_{2\ell}      }
$$
Near the node $p^i_j$ of the central fiber $C$, the fiber of ${\cal X}_\zeta$ over $s$ is  the set of $(x^i_j,y^i_j,s)\in \cx^3$ satisfying $x^i_jy^i_j=\zeta^i_j s^{ n_j}$
and the pullback of (\ref{LocalMap2})  is a  map $f_\zeta:\X_\zeta\to \D$ which, by  (\ref{NumberOfchain}), is given locally by
\begin{equation}
\label{step4-map}
G_\zeta(x^i_j,y^i_j, s)\ =\
\big((x^i_j)^{m_j}, (y^i_j)^{m_j},  s^{|m|}\big).
\end{equation}

\bigskip
\non
{\bf Step 5 -- Blow-up:}\ \  The surface  $\X_\zeta$ is singular at the nodes $p^i_j$ when $n_j>1$.  The singularities can be resolved by repeatedly blowing up,    as follows.  Suppressing $i$ and $j$ from the notation, $\X_\zeta$  is locally given by $xy=\zeta s^{n_j}$ with   $C_1$ given by $y=0$ and $E_0=E_j$ given by $x=0$. \\[-2mm]

\noindent{\sc  First blowup:}  Blow up along  the locus $y=s=0$ by setting $y=y_1s$ and pass to the proper transform.   This introduces an exceptional curve $E_1$ on $C_0$ with coordinates $y_1$ and $x_1=1/y_1$.  The proper transform is given by

\begin{figure}[!h]
\center
{\small
$ \begin{cases}
xy_1= \zeta s^{n_j-1} \ \ \mbox{near $C_1\cap E_1$}\\
x_1y=  s \ \ \mbox{near $E_1\cap E_0$}.
\end{cases}
\hspace{2cm}
  \begin{tikzpicture}[start chain]
\LabeledCircle{}{$C_1$}
\LabeledCircle{}{$E_1$}
\LabeledCircle{}{$E_0$}
\end{tikzpicture}$
}\end{figure}

\noindent{\sc  Second blowup:}  Blow up along  $y_1=s=0$ by setting $y_1=y_2s$.   This introduces  $E_2$  with coordinates $y_2$ and $x_2=1/y_2$; the  proper transform is given by
\begin{figure}[!h]
\center
{\small
$ \begin{cases}
xy_2= \zeta s^{n_j-2} \ \ \mbox{near $C_1\cap E_2$}\\
x_2y_1=  s \ \ \mbox{near $E_2\cap E_1$}.
\end{cases}$
\hspace{2cm}
  \begin{tikzpicture}[start chain]
\LabeledCircle{}{$C_1$}
\LabeledCircle{}{$E_2$}
\LabeledCircle{}{$E_1$}
\LabeledCircle{}{$E_0$}
\end{tikzpicture}
}\end{figure}

\noindent Blowing up  $n_j-1$ times, and repeating on the other side of $E_0=E_j$  and at each node $p^i_j$, yields a smooth surface $\C_\zeta$ and a diagram
$$
\xymatrix{
\C_\zeta \ar[dr]  \ar[drrr]^{\F_\zeta} \ar[ddr]_{\varphi_\zeta} \\
&\X_\zeta  \ar[rr]_{G_\zeta}   \ar[d]^{\wh{\varphi}_\zeta}
&&  \D  \ar[d]^{\rho} \\
&\Delta  \ar[rr]^{s\to s^{|m|}}  
&& \Delta      }
$$
The central fiber of  $\C_\zeta \to \Delta$ is as described in Theorem~\ref{TwistingProp},  and all other fibers are smooth.  Using (\ref{step4-map}) and the equations $x=\zeta s^{n_j-n}x_n$ and $y=s^ny_n$,  one sees that, for $1\leq n< n_j$,
the map $\F_\zeta:\C_\zeta\to \D$ is given locally near $E_n\cap E_{n-1}= \{y_{n-1}=s=0\}\cap \{x_n=s=0\}$ by
\begin{equation}
\label{RamF1}
\F_\zeta(x_n,y_{n-1},s) \ =\
\big( (x_n)^{m_j(n_j-n+1)}\, (y_{n-1})^{m_j(n_j-n)}, \  (x_n)^{m_j(n-1)}\, (y_{n-1})^{nm_j}, \   s^{|m|}\big)
\end{equation}
with $x_n y_{n-1}=s$ where $y_0=y$, and  near $E_{n_j-1}\cap C_1$ by the same formula with   $C_1$ and $x$ labeled as $E_{n_j}$  and $x_{n_j}$ and with $xy_{n_j-1}=\zeta s$.

\vspace{5mm}

We can now relate the fibers of $\C_\zeta$ to the spaces $\CZ_{m,f,r}$ in (\ref{DecMr}). Note that for each vector $\zeta$ as in Step~4, the restriction of $\F_\zeta$ to the fiber over  $r=s^{|m|}\not= 0$ is a map
\begin{equation}\label{RestrictionMap}
f_{\zeta,s}\ =\ \F_\zeta|_{C_{\zeta,s}}:C_{\zeta,s}\to D_r.
\end{equation}

\begin{lemma}
\label{modulispacesequal}
Whenever $s\not= 0$ and $r=s^{|m|}$, we have
\begin{equation}\label{SM-Fmoduli}
\CZ_{m,f,r}\ =\ \underset{ \zeta\in Q_m}{\tcup}\ \{\,f_{\zeta,s}\}
\end{equation}
where the union is overall vectors $\zeta$.
\end{lemma}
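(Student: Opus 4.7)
The plan is to establish (\ref{SM-Fmoduli}) as an equality of finite sets via containment plus a cardinality count.  For the inclusion $\{f_{\zeta,s}:\zeta\in Q_m\}\subset \CZ_{m,f,r}$, I would use that the algebraic family $\F_\zeta:\C_\zeta\to\D$ over $\Delta$ constructed in Steps~1--5 provides, for each $\zeta$, a deformation of the central map to the general fiber.  By part~(b) of Theorem~\ref{TwistingProp} and the analysis following (\ref{RamF1}), the central fiber $C_{\zeta,0}$ is the nodal domain $C$ with chains $\bar{E}_j$ of rational curves inserted at each node; on these chains $\F_\zeta$ either contracts to the node $p^i$ or restricts to the original finite cover of $E$, so contracting the unstable ghost components recovers exactly $f\in\M_{m,0}$.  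Hence $s\mapsto f_{\zeta,s}$ is a path in $\bigcup_{r\in\Delta}\M_{m,r}$ that specializes (in the sense of stable maps) to $f$ as $s\to 0$, placing each $f_{\zeta,s}$ in the connected component $\CZ_{m,f,r}$ by the definition (\ref{ComFModuli}).

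For the reverse inclusion, I would count.  The Gluing Theorem of \cite{IP2}, invoked immediately after (\ref{DecMr}), asserts $|\CZ_{m,f,r}|=|m|^2$, while $|Q_m|=\prod_{j=1}^\ell m_j^2 = |m|^2$ by construction, so it suffices to show that $\zeta\mapsto f_{\zeta,s}$ is injective.  To prove injectivity, I would identify the complex number $\zeta^i_j s^{n_j}$ as an intrinsic invariant of the pointed $V$-regular map $(f_{\zeta,s},C_{\zeta,s};\{q^i_j\})$.  Indeed, near the $(i,j)$-th former node, Step~4 gives the local equations $x^i_jy^i_j=\zeta^i_js^{n_j}$ on the domain and $u^iv^i=r=s^{|m|}$ on the target, and the map $(x,y)\mapsto(x^{m_j},y^{m_j})$ realizes $\zeta^i_js^{n_j}$ as the specific $m_j$-th root of $r$ selected by this branch of the smoothing.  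Since the contact marked points in $V_r$ are fixed, the target coordinates $(u^i,v^i)$ at the former node $p^i$ are determined intrinsically, so the lifted product $x^i_jy^i_j$ at the neck is also intrinsic, and distinct $\zeta$ therefore yield non-isomorphic $V$-regular maps.

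The main obstacle will be making this injectivity argument rigorous: one must rule out the possibility that a nontrivial global automorphism of $C_{\zeta,s}$ commuting with $f_{\zeta,s}$ could rotate the local coordinates at several necks simultaneously so as to identify two distinct vectors $\zeta,\zeta'\in Q_m$.  This rigidity is forced by the relative structure: any such automorphism must act trivially on each of the curves $C_i$ away from the necks (because it fixes every contact marked point over $q^1,\dots,q^{k+3}$, and a self-cover-automorphism of a $V$-regular map fixing those points must be the identity on each irreducible component), and then a short local computation in the coordinates of Step~4 shows that the only automorphism extending trivially across a neck with parameter $\zeta^i_js^{n_j}$ is the identity itself.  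This yields $\zeta=\zeta'$ and completes the proof.
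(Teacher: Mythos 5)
Your proposal follows essentially the same route as the paper's proof: you show each $f_{\zeta,s}$ is a $V$-regular map in $\M_r$ whose Gromov limit as $s\to 0$ is $f$ (so it lies in the component $\CZ_{m,f,r}$), and then conclude from the cardinality count $|Q_m|=|m|^2=|\CZ_{m,f,r}|$ furnished by the Gluing Theorem of \cite{IP2}. The only difference is that you make explicit the distinctness of the maps $f_{\zeta,s}$ for distinct $\zeta$ (via rigidity coming from the labeled contact points over the added trivial partitions), a point the paper leaves implicit in its counting step.
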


\begin{proof}
Recall that $f:C\to D_0$ has contact marked points $q^i_j$ over $q^i\in D_0$ with multiplicities given by an odd partition $m^i$ for $1\leq i\leq k+3$.  By our choice of $q^i$ in Step~1, around each $q^i_j$ the map $\F_\zeta$ is
\begin{equation}\label{RamF2}
(x,s)\ \to\  (f(x),s^{|m|}).
\end{equation}
Hence the pull-back $\F_\zeta^*Q^i$ of $\D\to\Delta$
consists of
$\ell(m^i)$ sections $Q^i_j$ given by $Q^i_j(s)=(q^i_j,s)$.
After marking the points $Q^i_j\cap C_{\zeta,s}$, each of the $|m|$ maps   (\ref{RestrictionMap})
has contact marked points
$Q^i_j(s)$ over $Q^i(r)$ with multiplicity $m^i_j$, and thus lies  in the space
$\M_r$ of (\ref{def-Mr}).  As $r=s^{|m|}\to 0$ we have
 $f_{\zeta,s}\to f$ in the  Gromov topology; in particular,
the stabilization of the domain $C_{\zeta,s}$ converges to $C$.
The lemma follows because  $|Q_m|=|m|^2=|\CZ_{m,f,r}|$.
\end{proof}

\bigskip
\non
{\bf Step 6 -- Twisting at nodes:}\ \
The  pullback  $\F_\zeta^*\L$ of  the spin structure $(\L,\Phi)$ on the family $\D\to \Delta$ is not a
  theta characteristic on the fibers of $\C$.   In this step we  twist $\F_\zeta^*\L$ by a divisor $A$  to produce a line bundle
\begin{equation}
\label{defLZeta}
 \N_\zeta\ =\ \F_\zeta^*\L\otimes \O\big(\tfrac12 Q+ A\big).
\end{equation}
  over $\C_\zeta$ with the properties described in Theorem~\ref{TwistingProp}:  it  restricts to a theta characteristic on the general fiber, and  is especially simple on the central chains $\bar{E}_j$.  This twisting is crucial for later computations.

Specifically, let $Q=\sum (m^i_j-1)Q^i_j$ be the divisor on $C_\zeta$ as above and let $A=\sum A_j$ where
\begin{equation}
\label{defnofA}
A_j\ =\
\begin{cases}
\displaystyle
\tfrac{ n_jm_j-2}{2} \,E_j\ +\  \sum_{n=1}^{ n_j-1}\,\tfrac{(n_j-n)m_j-2}{2}\,  (E^1_{j;n}+E^2_{j;n})
  \qquad
\   &\mbox{if}\  m_j\  \mbox{is\ even,} \\[5mm]
\displaystyle
\tfrac{ n_j(m_j-1)}{2}\, E_j\ +\
\sum_{n=1}^{ n_j-1}\, \tfrac{(n_j-n)(m_j-1)}{2}\,  (E^1_{j;n}+E^2_{j;n})
&\mbox{if}\ m_j\ \mbox{is\ odd.}
\end{cases}
\end{equation}
To compute the restriction of $\N_\zeta$ to the fibers of $\C_\zeta$ we note a general fact:  fix  any irreducible component $\chi_m$  of $C_0$ and consider  the bundle $\O(\chi_m)$ on $\C$. For each other  component $\chi_n$, let  $P_{mn}$ be the divisor $\chi_m\cap \chi_n$.
By restricting  local defining functions one sees that   the  restriction of $\O(\chi_m)$ to a  general fiber $C_s$  and to $\chi_n$ are:
\
\begin{equation}
\label{4.HM}
 \O(\chi_m)\big|_{C_s} = \O, \quad\
 \O(\chi_m)\big|_{\chi_n} = \O\big(P_{mn}\big) \quad\mbox{for $m\not= n$},
\quad\
 \O(\chi_n)\big|_{\chi_n} =  \O\big(\textstyle{-\sum_{m\not= n} } P_{mn}\big) .
\end{equation}

\bigskip

\non
{\bf Proof of Theorem~\ref{TwistingProp}.}
For each $\zeta$ and $s\ne 0$,
the ramification divisor of the map
$f_{\zeta,s}$  in (\ref{RestrictionMap}) is $Q|_{C_{\zeta,s}}$,  and
by (\ref{4.HM}) the restriction of $\N_\zeta$ to $C_{\zeta,s}$ is
$$
N_{\zeta,s} \ =\  f_{\zeta,s}^*(\L|_{D_r})\otimes \O\big(\tfrac12 Q|_{C_{\zeta,s}}\big)
$$
Thus, as in (\ref{thetaonC}),  $N_{\zeta,s}$ is a theta characteristic on $C_{\zeta,s}$ and $f_{\zeta,s}$ has the associated parity $p(N_{\zeta,s})$.
Therefore (\ref{Summary-Shiffer}) follows from Lemma~\ref{modulispacesequal}.
This completes the proof of part(a) of Theorem~\ref{TwistingProp}.   Part(c) follows similarly, using (\ref{4.HM}) and noting that
 $f_i=\F_\zeta|_{C_i}$ has ramification index $m_j$ at the node in $C_i\cap E^i_{j;n_j-1}$.   Part(b) was shown in Step~5 above.  Finally, Part(d) follows by successively  applying (\ref{4.HM}), taking  $\chi_i$ to be the various $E^i_{j;n}$ and observing that $Q$ is disjoint from the chains $\bar{E}_{j}$ and that
 \begin{itemize}
\item
$\F_\zeta^*\L|_{E^i_{j;n}}=\O$ for $n=1,\cdots,n_j-1$ because the image $\F_\zeta(E^i_{j;n})$ is a point,
\item
$\F_\zeta^*\L|_{E_j}=\O(m_j)$ since  $\F_\zeta|_{E_j}=f|_{E_j}:E_j\to E$ has degree $m_j$ and $\L|_E=\O(1)$. \qed
\end{itemize}

\vspace{3mm}
\begin{center}
\psfrag{A}{$C_1$}
\psfrag{B}{$C_2$}
\psfrag{E1}{$E_1^0$}
\psfrag{E2}{$E_2^0$}
\psfrag{E3}{$E_3^0$}
\psfrag{E}{$E$}
\psfrag{D1}{$D_1$}
\psfrag{D2}{$D_2$}
\psfrag{F1}{$E^1_{1k}$}
\psfrag{F2}{$E^2_{1k }$}
\includegraphics[scale=1]{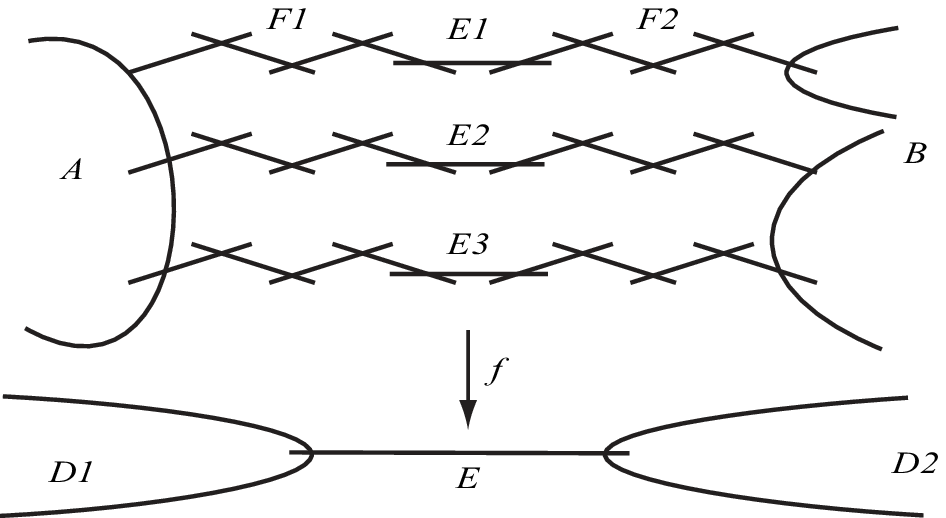}
\end{center}

\vspace{.6cm}


\setcounter{equation}{0}
\section{The operators $L_t$ on the family}
\label{section6}
\bigskip

For each $\zeta$, we now have an algebraic family $\C_\zeta \to\Delta$ and a bundle $\N_\zeta$ on $\C_\zeta$.    One can then apply the construction of Section~\ref{section2} to the fibers of $\C_\zeta$ to obtain operators
\begin{equation}
\label{5.1}
L_{s,t} = \del_{C_s} + tR_s: \Omega^{0}(C_{\zeta,s}, N_{\zeta,s}) \to \Omega^{0,1}(C_{\zeta,s},  N_{\zeta,s})
\end{equation}
that are a family version of the operators (\ref{Defdelt}).  This section describes a  global construction on the complex surface $\C$ whose restriction to fibers gives the operators (\ref{5.1}).   The global construction will be important  in later sections to obtain estimates on $L_{t,s} $ that are uniform in $s$.

 \medskip

\begin{lemma}
\label{firstpsilemma}
Each spin structure on $\D$ determines  a  nowhere-zero section $\psi$ of   $K_\C\otimes (\N^*_\zeta)^2\otimes \O(-\hat{A})$ where, with the same notation as (\ref{defnofA}),
\begin{equation*}
\hat{A}\ =\ \sum_{j=1}^\ell \hat{A}_j
\ \ \ \ \  \mbox{with}\ \ \ \ \
\hat{A}_j\ =\
\left\{
\begin{array}{ll}
\displaystyle
2 E_j \ +\
\sum_{n=1}^{ n_j-1}2(E^1_{j;n}+E^2_{j;n})   \qquad
\   &\mbox{if}\  m_j\  \mbox{is\ even,} \\[5mm]
\displaystyle
n_j E_j \ +\  \sum_{n=1}^{ n_j-1} (n_j-n)  (E^1_{j;n}+E^2_{j;n})
&\mbox{if}\ m_j\ \mbox{is\ odd}
\end{array} \right. \\[1mm]
\end{equation*}
\end{lemma}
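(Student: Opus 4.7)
My plan is to produce the section $\psi$ by pulling back a canonical section from $\D$ and rescaling by a power of the base parameter $s$. The spin map $\Phi:\L^2\to K_\D$ is an isomorphism on each $D_r$ ($r\ne0$) and on $D_1,D_2$, and vanishes identically on the exceptional component $E$ of the central fiber. Using the adjunction computation $K_\D|_E=\O_E$ together with $\L|_E=\O_E(1)$, the line bundle $\M:=K_\D\otimes\L^{-2}\otimes\O_\D(-E)$ has degree $0$ on every irreducible component of $\D$, so the natural vanishing of $\Phi$ on $E$ is of order exactly one and $\Phi = s_E\cdot\phi$ with $s_E$ the tautological section of $\O(E)$ and $\phi$ a nowhere-zero global section of $\M$. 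Pulling back, $\F_\zeta^{\,*}\phi$ is a nowhere-zero section of $\F_\zeta^{\,*}\M$ on $\C_\zeta$ (nonzero even on components contracted by $\F_\zeta$, since $\phi$ is nowhere zero on $\D$).

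The heart of the proof is then a divisor identity on $\C_\zeta$. Combining Hurwitz for $\F_\zeta$, namely $K_{\C_\zeta}\cong\F_\zeta^{\,*}K_\D\otimes\O(R_{\F_\zeta})$, with the definition $\N_\zeta=\F_\zeta^{\,*}\L\otimes\O(\tfrac12 Q+A)$, the claim
$$\F_\zeta^{\,*}\M\ \cong\ K_{\C_\zeta}\otimes \N_\zeta^{-2}\otimes \O\bigl(-\hat A-(|m|-1)C_{\zeta,0}\bigr)$$
is equivalent to $R_{\F_\zeta}+\F_\zeta^{\,*}E=Q+2A+\hat A+(|m|-1)C_{\zeta,0}$ as divisors. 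I will verify this by reading off both sides from the local formula (\ref{RamF1}) for $\F_\zeta$. The Jacobian determinant of $\F_\zeta$ on the chain chart $(x_n,y_{n-1},s)$ with $x_ny_{n-1}=s$ works out, after a short calculation, to $m_j^2n_j\,s^{|m|-1}$ (the bracket $(n_j-n+1)n-(n_j-n)(n-1)$ collapses to $n_j$), and near a generic point of $C_i$ it is $f_i'(x)\cdot|m|\,s^{|m|-1}$ with an extra $x^{m^i_j-1}$ near $Q^i_j$; assembling globally gives $R_{\F_\zeta}=(|m|-1)C_{\zeta,0}+Q$. Similarly $u\circ\F_\zeta=x_n^{m_j(n_j-n+1)}y_{n-1}^{m_j(n_j-n)}$ shows that $\F_\zeta^{\,*}E$ has multiplicity $|m|$ on $E_j$ and $m_j(n_j-n)$ on $E^i_{j;n}$, with no horizontal contribution since $m^{k+2}=(1^d)$.

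The identity $\F_\zeta^{\,*}E=2A+\hat A$ (component by component, in the chain) is then the central combinatorial check: writing $2A_j+\hat A_j$ from (\ref{defnofA}) and the definition of $\hat A_j$ in the lemma, the $-2$ correction in $2A_j$ is cancelled exactly by the $+2$ from $\hat A_j$ in the even case, while in the odd case the $n_j(m_j-1)$ and $(n_j-n)(m_j-1)$ terms combine with $n_j$ and $(n_j-n)$ to give $n_jm_j=|m|$ and $m_j(n_j-n)$ respectively. Thus $R_{\F_\zeta}+\F_\zeta^{\,*}E-Q-2A-\hat A=(|m|-1)C_{\zeta,0}$, a central-fiber divisor that is linearly equivalent to zero via the global section $s$ of $\O(C_{\zeta,0})$.

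With this identity in hand, $\F_\zeta^{\,*}\phi$ is a nowhere-zero section of $K_{\C_\zeta}\otimes\N_\zeta^{-2}\otimes\O(-\hat A-(|m|-1)C_{\zeta,0})$. Because $s^{|m|-1}$ trivializes $\O((|m|-1)C_{\zeta,0})$, setting
$$\psi\ :=\ \F_\zeta^{\,*}\phi\ \big/\ s^{|m|-1}$$
produces a nowhere-vanishing holomorphic section of $K_{\C_\zeta}\otimes\N_\zeta^{-2}\otimes\O(-\hat A)$, as required. The main obstacle I anticipate is the bookkeeping in Step\,3: one must choose local charts consistently across the three regimes (chain node, boundary with $C_i$, horizontal section) and confirm that multiplicities on shared components computed in different charts agree. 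The combinatorial check in Step\,4 is short but the cases for even versus odd $m_j$ must be presented side by side so the uniform form $|m|E_j+\sum_n m_j(n_j-n)(E^1_{j;n}+E^2_{j;n})$ emerges cleanly.
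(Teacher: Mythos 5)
Your proposal is correct and follows essentially the same route as the paper: pull back the nowhere-vanishing section $\phi$ of $K_\D\otimes(\L^*)^2\otimes\O(-E)$ determined by the spin structure, then match divisors on $\C_\zeta$ using the Hurwitz formula, the definition $\N_\zeta=\F_\zeta^*\L\otimes\O(\tfrac12 Q+A)$, and the local formulas (\ref{RamF1})--(\ref{RamF2}), absorbing multiples of the central fiber via the triviality of $\O(C_{\zeta,0})$. The only (harmless) bookkeeping differences are that you pull back $E$ directly where the paper uses $\O(-E)\cong\O(D_1+D_2)$ and pulls back $D_1+D_2$, and that your Jacobian count ${\cal R}_{\F_\zeta}=Q+(|m|-1)C_{\zeta,0}$ differs from the paper's stated $Q+|m|C_{\zeta,0}$ by one copy of the trivial divisor class of $C_{\zeta,0}$, which does not affect the conclusion.
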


\begin{proof}
The spin structure (\ref{homomorphism}) on $\D$ vanishes to first order along $E\subset D_0$, so defines a section $\phi$ of $K_\D\otimes (\L^*)^2\otimes \O(-E)$.
Noting that $\O(D_0)=\O$, we can write $\O(-E)$ as $\O(D_1+D_2)$.  Using the definition (\ref{defLZeta}), the pullback $\psi=\F^*_\zeta\phi$ is then a section of
\begin{equation}
\label{psipullbackhome}
\F_\zeta^*(K_\D)\otimes \O(Q)\otimes \O(2A)\otimes (\N_\zeta^*)^2\otimes \F_\zeta^*\O(D_1+D_2).
\end{equation}
Recall that the ramification divisor ${\cal R}_{\F_\zeta}$ of the map $\F_\zeta$
has local defining functions given by the Jacobian of $\F_\zeta$. One can thus see from (\ref{RamF1}) and (\ref{RamF2})  that
${\cal R}_{\F_\zeta} =Q + |m|C_{\zeta,0}$. Choosing a trivialization $\O(C_{\zeta,0})=\O$, the Hurwitz formula  gives
\begin{equation}\label{HurFor-S}
\ K_{\C_\zeta}\ =\ \F^*K_\D\otimes \O({\cal R}_{\F_\zeta})\ =\  \F_\zeta^*K_\D\otimes \O(Q).
\end{equation}
From the second equation in (\ref{RamF1}) we also have
\begin{equation}\label{pull-back-E}
 \F_\zeta^*\O(D_1+D_2)\  =\
\O\Big(\,|m|C_1+|m|C_2 +\sum_{j=1}^\ell \sum_{n=1}^{n_j-1}nm_j(E^1_{j;n}+E^2_{j;n})\,\Big)
\end{equation}
because $\{v^i=0\}\subset D_i$ and $\{y^i_{n-1}=0\}\subset E^i_{j;n}$.
Together with the fact $\O(|m|C_{\zeta,0})=\O$,
the last two displayed equations  imply that
the right-hand side of (\ref{psipullbackhome}) is $K_\C\otimes (\N^*_\zeta)^2\otimes \O(-\hat{A})$.
\end{proof}

\medskip

\begin{cor}
\label{cor5.2}
There  is a conjugate-linear bundle map
$R_\zeta: \N_\zeta  \to \bar{K}_\C \otimes \N_\zeta$ whose divisor is $\hat{A}$.
\end{cor}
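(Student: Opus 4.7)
The plan is to mirror, on the complex surface $\C_\zeta$, the pointwise construction of Lemma~\ref{Rlemma}. First, fix Hermitian metrics on $\C_\zeta$ and on $\N_\zeta$; these determine a conjugate-linear Hodge star
$$
\bar{*}:\Lambda^{p,q}\C_\zeta\otimes \N_\zeta^*\longrightarrow \Lambda^{2-p,2-q}\C_\zeta\otimes \N_\zeta,
$$
exactly analogous to the one used in Section~\ref{section2} but with the ambient manifold now two-dimensional.

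Next, I would invoke Lemma~\ref{firstpsilemma}: the spin structure $(\L,\Phi)$ on $\D$ produces a nowhere-vanishing section $\psi$ of $K_\C\otimes(\N_\zeta^*)^2\otimes \O(-\hat{A})$. Tensoring with the canonical section of $\O(\hat{A})$ (whose zero divisor is $\hat{A}$) embeds $\psi$ as a section $\tilde\psi$ of $K_\C\otimes(\N_\zeta^*)^2$ whose zero divisor is precisely $\hat{A}$. Regarding $\tilde\psi$ pointwise as a $\cx$-linear bundle map $\N_\zeta\to K_\C\otimes \N_\zeta^*$ (by contracting one $\N_\zeta^*$ factor against the input) and composing with $\bar{*}$, I would set
$$
R_\zeta \;=\; \bar{*}\circ\tilde\psi:\N_\zeta\longrightarrow \bar{K}_\C\otimes \N_\zeta.
$$
This is a conjugate-linear bundle map whose pointwise zero locus coincides with that of $\tilde\psi$, because $\bar{*}$ is fiberwise (conjugate-linearly) invertible. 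Consequently the divisor of $R_\zeta$ equals $\hat{A}$, as required.

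The only nontrivial input is Lemma~\ref{firstpsilemma}, which has already been established; the passage from $\tilde\psi$ to $R_\zeta$ is formally identical to the curve case in the proof of Lemma~\ref{Rlemma}, now carried out globally on the smooth surface $\C_\zeta$. I do not anticipate any genuine obstacle: the divisor bookkeeping is transparent once one notes that the inclusion $\O(-\hat{A})\hookrightarrow \O$ introduces zeros of exactly the orders prescribed by $\hat{A}$. In later sections it will matter that the restriction of $R_\zeta$ to each smooth fiber $C_{\zeta,s}$ gives a bundle map of the type produced by Corollary~\ref{2.tRfcor}, but that fiberwise compatibility is a matter for the next section rather than part of the present existence statement.
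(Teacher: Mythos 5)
Your proposal is correct and follows essentially the same route as the paper: the paper likewise tensors the nowhere-zero section $\psi$ of Lemma~\ref{firstpsilemma} with a section of $\O(\hat{A})$ having divisor $\hat{A}$, views the result as a map $\N_\zeta\to K_\C\otimes\N_\zeta^*$, and composes with the conjugate-linear star $\bar{*}:\Omega^{2,0}(\C,\N_\zeta^*)\to\Omega^{0,2}(\C,\N_\zeta)$ to obtain $R_\zeta$ with divisor $\hat{A}$. Nothing essential differs.
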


\begin{proof}
Choose a global section $a$ of $\O(\hat{A})$ with divisor $\hat{A}$.  Then with $\psi$ as in Lemma~\ref{firstpsilemma},  $\psi\otimes a$ is a section of  $K_\C\otimes (\N^*)^2$ whose divisor is $\hat{A}$. Regarding this as a map $\hat{\psi}:  \N_\zeta \ \to\ K_\C\otimes \N_\zeta^*$ and composing with the (conjugate-linear) star operator $\bar{*}: \Omega^{2,0}(\C,  \N_\zeta^*)\to  \Omega^{0,2}(\C,  \N_\zeta)$ gives a bundle map
\begin{equation}
\label{5.psihat}
R_\zeta =\bar{*}\,\hat{\psi}:\, \N_\zeta \ \to\ \bar{K}_\C \otimes \N_\zeta
\end{equation}
 with divisor $\hat{A}$.
 \end{proof}

Because  $\C$ is a smooth surface, the canonical bundle $K_\C$ is isomorphic to the relative dualizing sheaf  $\w_\zeta$ of $\varphi_\zeta:\C\to \Delta$.  In fact, the restrictions of $K_\C$ and $\w_\zeta$ are related by the commutative diagram
\begin{equation}
\label{5.diagram}
\xymatrix{
\w_\zeta\otimes \N^*_\zeta |_{C_s} \ar[rr]_{\bar{*}_s}  \ar[d]_{\wedge ds}
 &&  \bar{\w}_\zeta\otimes \N_\zeta |_{C_s}   \ar[d]^{\wedge d\bar{s}} \\
\ \ \  K_\C\otimes \N^*_\zeta |_{C_s} \  \ar[rr]^{\bar{*}}
 &&\ \ \ \bar{K}_\C\otimes \N_\zeta |_{C_s} \      }
\end{equation}
where $\bar{*}$ is as in Corollary~\ref{cor5.2},    $\bar{*}_s$ is the similar operator on the fiber $C_s$ of $\C$, and all four arrows are isomorphisms.   In local coordinates $(x,y,s)$ near a node $xy=s$ of $C_s$,   we have $ds=x dy+ydx$ and   $\w_\zeta$ is freely  generated by $\tau=\frac{dx}{x}=-\frac{dy}{y}$.  The star operator  on $C_s$ is multiplication by $i$ on $(1,0)$ forms and by $-i$ on $(0,1)$-forms, so $\bar{*}\tau =  *\bar{\tau}=-i\bar{\tau}$.  The diagram commutes because, after restricting to $C_s$ and suppressing the bundle coordinates,   $\tau\wedge ds = \frac{dx}{x}\wedge(xdy+ydx) =  dx\wedge dy$ and hence
$$
\bar{*}(\tau\wedge ds) = *(d\bar{x}\wedge d\bar{y}) = -i *(d\bar{x}\wedge d\bar{y}) =-i *(\bar{\tau}\wedge d\bar{s}) = (\bar{*}_s\tau)\wedge d\bar{s}.
$$

Diagram (\ref{5.diagram}) implies that  for each $s$ there is a section $\psi_s$ of $\w_\zeta\otimes \N_\zeta^*$ on $C_{\zeta,s}$ such that $\psi_s\wedge ds$ is the section $\hat{\psi}$ in (\ref{5.psihat}).   Consequently, for each $s$, $R_s=\bar{*}_s\psi_s$ is a conjugate-linear bundle map
\begin{equation}\label{HomR}
R_s : N_{\zeta,s} \ \to\ \bar{\w}_\zeta \otimes N_{\zeta,s}
\end{equation}
between bundles on the curve $C_{\zeta,s}$. Let $N_{\zeta,i}=\N_\zeta|_{C_i}$ for $i=1,2$.

\begin{theorem}
\label{theorem6.3}
The map (\ref{HomR})  satisfies Properties (\ref{2.threeproperties}).   Furthermore,
\begin{itemize}
\item[(a)]  On each smooth fiber   ${C_{\zeta,s}}$, $R_s$ is an isomorphism $N_{\zeta,s} \to \bar{K}_{C_{\zeta,s}} \otimes N_{\zeta,s}$.
\item[(b)]  For $i=1,2$, the restriction of $R_0$  to $C_i$ is a map  $R_i:N_{\zeta,i} \to  \bar{K}_{C_i} \otimes N_{\zeta,i}$
with divisor ${\cal R}_{f_i}^{ev}$.
\end{itemize}
\end{theorem}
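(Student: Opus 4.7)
The plan is to reduce all three claims to fiberwise divisor computations for $\psi_s$ and to apply Lemma~\ref{Rlemma}. The identities (\ref{2.threeproperties}) are pointwise algebraic relations among $R_s$, $R_s^*$ and $\del$; on any smooth locus of a fiber $C_{\zeta,s}$ the relative dualizing sheaf $\w_\zeta$ restricts to $K_{C_{\zeta,s}}$ and $\psi_s$ is a holomorphic section of $K_{C_{\zeta,s}}\otimes (N_{\zeta,s}^*)^2$, so Lemma~\ref{Rlemma} applied fiberwise produces (\ref{2.threeproperties}).

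For part (a), Lemma~\ref{firstpsilemma} says $\psi$ is nowhere zero, so the divisor of $\hat{\psi}=\psi\otimes a$ is exactly $\hat{A}$, which by its defining formula is supported on components of the central fiber $C_{\zeta,0}$. Hence $\hat{\psi}$ has no zeros on any general fiber $C_{\zeta,s}$ with $s\neq 0$, and since $ds$ is a nowhere-vanishing section of $K_\Delta$ transverse to the fibers, the relation $\hat{\psi}=\psi_s\wedge ds$ forces $\psi_s$ to be nowhere zero on $C_{\zeta,s}$. Thus $R_s=\bar{*}_s\psi_s$ is a bundle isomorphism.

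For part (b), adjunction together with the triviality of $\O(C_{\zeta,0})$ gives $\w_\zeta|_{C_i}=K_{C_i}(\sum_j p^i_j)$. Near the node $p^i_j=C_i\cap E^i_{j,n_j-1}$, Step~5 provides local coordinates $(x,y)$ on $\C_\zeta$ with $C_i=\{y=0\}$, $E^i_{j,n_j-1}=\{x=0\}$, and $xy=\zeta_j^i\, s$. Writing $\hat{\psi}=h(x,y)\, dx\wedge dy\otimes (\nu^*)^2$ locally, the coefficient of $E^i_{j,n_j-1}$ in $\hat{A}$ reads off from Lemma~\ref{firstpsilemma} as $c=2$ when $m_j$ is even and $c=1$ when $m_j$ is odd, with no contribution from $C_i$, so $h=x^c g$ with $g$ a local unit. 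Using $\tau=dx/x$ as a local generator of $\w_\zeta$ and the identity $dx\wedge dy=\zeta_j^i\,\tau\wedge ds$, the relation $\hat{\psi}=\psi_0\wedge ds$ yields $\psi_0|_{C_i}=\zeta_j^i\, x^c g(x,0)\,\tau|_{y=0}\otimes (\nu^*)^2$, a section of $\w_\zeta|_{C_i}\otimes(N_{\zeta,i}^*)^2$ with divisor $c\cdot p^i_j$ at the node. Applying the fiberwise isomorphism $\bar{*}$ preserves the divisor, and the inclusion $\bar{K}_{C_i}\hookrightarrow\bar{\w}_\zeta|_{C_i}=\bar{K}_{C_i}(\sum_j p^i_j)$ lowers orders at $p^i_j$ by one, so $R_i$ lies in $\bar{K}_{C_i}\otimes N_{\zeta,i}$ with divisor $p^i_j$ when $m_j$ is even and $0$ when $m_j$ is odd. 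Summing over $j$ recovers ${\cal R}_{f_i}^{ev}$.

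The main delicate point is this conductor-type shift: the coefficients $c=2$ and $c=1$ appearing in $\hat{A}$ are calibrated so that, after the adjunction decrement, even-ramification nodes contribute a simple zero to $R_i$ while odd-ramification nodes contribute nothing. This is precisely the arithmetic that the twisting divisor $A$ in (\ref{defnofA}) was engineered to produce; keeping track of the local frames through the blowups of Step~5 is essentially the only bookkeeping required.
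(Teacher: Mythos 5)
Your proposal is correct and follows essentially the same route as the paper: properties (\ref{2.threeproperties}) via the pointwise computation of Lemma~\ref{Rlemma}, part (a) from the fact that the divisor $\hat{A}$ of $\hat{\psi}$ lies entirely in the central fiber, and part (b) from $\w_\zeta|_{C_i}=K_{C_i}\big(\sum_j p^i_j\big)$ together with the coefficients $2$ (even $m_j$) and $1$ (odd $m_j$) of the components adjacent to $C_i$ in $\hat{A}$, giving divisor $C_i\cap\hat{A}-\sum_j p^i_j={\cal R}_{f_i}^{ev}$. The only difference is that you verify the divisor shift in part (b) by an explicit local-coordinate computation at the nodes, whereas the paper states it as a one-line global divisor identity; the content is the same.
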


\begin{proof}
 The proof of Lemma~\ref{Rlemma} shows that $R_s$ satisfies  Properties (\ref{2.threeproperties}). By Diagram (\ref{5.diagram})  we have $R_s\wedge d\bar{s}= \bar{*}_s\psi_s\wedge d\bar{s} = R_\zeta$, so the divisor of $R_s$ is $\hat{A}\cap C_{\zeta,s}$.  Statement (a) holds because this intersection is empty for $s\not=0$.    For (b), note that the restriction of $\w_\zeta$ to $C_i$ is $K_{C_i}\otimes \O(\sum_j p^i_j)$, so the divisor of $R_s$ is $C_i\cap\hat{A} -\sum_j p^i_j = {\cal R}_{f_i}^{ev}$.

\end{proof}

It is useful to have a local  formula for $R$ around the nodes $p^i_j$ where $C_i$ meets the chain $\bar{E}_j$.   As in (\ref{RamF1}), we have local coordinates $(x,y,s)$ around $p^i_j$ in which $C_1=\{y=s=0\}$ and $E^i_{j,n_j-1}=\{x=s=0\}$.  By Corollary~\ref{cor5.2} and the definition of $\hat{A}$, there is a local nowhere-zero section $\nu$ of $\N_\zeta$ and a constant $a\in\cx^*$ such that   $R(\nu) = a \bar{x}^p\, \bar{\tau}\otimes\nu$ where   $p=2$ if $m_j$ is even and $p=1$ is $m_j$ is odd.  By replacing $\nu$ by $e^{i\theta}\nu$, we can assume that $a$ is real and positive.  Thus after writing $\tau$ as $dx/x$ we have
\begin{equation}
\label{5.Requation}
 R(\nu)\Big|_{\bar{E}_j}\ =\ 0 \hspace{2cm}
 R(\nu)\Big|_{C_i}\ =\
 \begin{cases}
a \bar{x}\ \  d\bar{x}\otimes \nu   & \mbox{$m_j$  even} \\
a \ \   d\bar{x}\otimes \nu     \qquad &   \mbox{$m_j$ odd.}
\end{cases}
\end{equation}
 for some real $a>0$.  Similarly, one finds that at each interior nodes of $\bar{E}_j$, there are local coordinates in which $R(\nu) = a \bar{x}\, \bar{y}^2\   d\bar{x}\otimes \nu$.

\medskip

We conclude this section by stating  two facts about the index of the operators (\ref{5.1}).

\begin{lemma}
\label{indexlemma}
For $s\not= 0$,  the operator $L_{s,t}$ on $C_s$ has index 0, and for $i=1,2$  $\ind L_{0, t}\big|_{C_i}=-\ell^{ev}$
where $\ell^{ev}$ is the number of even ramification points of $f_i=f_0|_{C_i}$.
\end{lemma}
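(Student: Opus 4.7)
The plan is to compute both indices via Riemann--Roch, using the observation that $tR_s$ is a zeroth-order (hence compact) perturbation of the $\cx$-linear Cauchy--Riemann operator $\del$, so the real Fredholm index of $L_{s,t}$ agrees with that of $\del$. The whole computation then reduces to evaluating the holomorphic Euler characteristics $\chi(C_{\zeta,s},N_{\zeta,s})$ and $\chi(C_i,N_{\zeta,i})$ and doubling.

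For the first assertion I would note that, by Theorem~\ref{theorem6.3}(a), $R_s$ is a bounded bundle map on the smooth fiber $C_{\zeta,s}$, so $L_{s,t}$ is Fredholm with the same real index as $\del$, namely $2\chi(C_{\zeta,s},N_{\zeta,s})$. By Theorem~\ref{TwistingProp}(a), $N_{\zeta,s}$ is a theta characteristic, so Serre duality gives $h^0(N_{\zeta,s})=h^1(N_{\zeta,s})$; hence $\chi(C_{\zeta,s},N_{\zeta,s})=0$ and $\ind L_{s,t}=0$.

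For the second assertion I would invoke Theorem~\ref{theorem6.3}(b), which says that $R_0$ restricts to a bundle map $R_i:N_{\zeta,i}\to\bar K_{C_i}\otimes N_{\zeta,i}$, so that $L_{0,t}|_{C_i}$ is Fredholm with real index $2\chi(C_i,N_{\zeta,i})$. From Theorem~\ref{TwistingProp}(c),
$$
N_{\zeta,i}\,=\,f_i^*(\L|_{D_i})\otimes\O\bigl(\tfrac12({\cal R}_{f_i}-{\cal R}_{f_i}^{ev})\bigr),
$$
together with $\deg(\L|_{D_i})=h_i-1$ (theta characteristic on a genus-$h_i$ curve) and $\deg f_i=d$, I obtain
$$
\deg N_{\zeta,i}\,=\,d(h_i-1)+\tfrac12\bigl(|{\cal R}_{f_i}|-\ell^{ev}\bigr).
$$
Riemann--Hurwitz for $f_i:C_i\to D_i$ gives $g(C_i)=d(h_i-1)+1+\tfrac12|{\cal R}_{f_i}|$, so Riemann--Roch yields
$$
\chi(C_i,N_{\zeta,i})\,=\,\deg N_{\zeta,i}+1-g(C_i)\,=\,-\tfrac12\,\ell^{ev},
$$
and doubling gives $\ind L_{0,t}|_{C_i}=-\ell^{ev}$.

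I do not anticipate any real obstacle --- this is a bookkeeping lemma. The only sanity checks are that the twisting divisor $\tfrac12({\cal R}_{f_i}-{\cal R}_{f_i}^{ev})$ be integral and that $\ell^{ev}$ be an even integer, and both follow from Remark~\ref{RamificationIndex}: away from nodes the ramification indices are odd, so each $m_j-1$ (odd $m_j$) and each $m_j-2$ (even $m_j$) is even, and the number of even ramification points of $f_i$ is itself even.
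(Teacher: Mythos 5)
Your proof is correct and takes essentially the same route as the paper: treat $tR$ as a compact zeroth-order perturbation so the real index is $2\chi(N)$, then use that $N_{\zeta,s}$ is a theta characteristic for $s\neq 0$, and for the central fiber combine Theorem~\ref{TwistingProp}(c) with Riemann--Roch and Riemann--Hurwitz to get $-\ell^{ev}$. The only cosmetic difference is that the paper phrases the $C_i$ computation with Euler characteristics rather than $g(C_i)$, which also covers the case of disconnected $C_i$; your bookkeeping goes through verbatim after replacing $1-g(C_i)$ by $\chi(\O_{C_i})$.
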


\begin{proof}
For each $s$, $L_{s,t}$ is a compact perturbation of the $\del$-operator, so its index is  twice of the holomorphic Euler characteristic $\chi(N_{\zeta,s})$. 
But $\chi(N_{\zeta,s})=0$ for $s\not= 0$ because $N_{\zeta,s}$  is  a theta characteristic on  $C_s$.    Similarly, for $i=1,2$, $\N|_{D_i}$ is a  theta characteristic on $D_i$ so $2\deg(\N|_{D_i})=2h-2$.  Theorem~\ref{TwistingProp}c,  the
 Riemann-Roch and  Riemann-Hurwitz formulas then give
\begin{equation}
\label{5.index}
2\chi(N_{\zeta,i})\
=\ -\deg(f^*_iTD_i) +\deg({\cal R}_{f_i}-{\cal R}^{ev}_{f_i}) +  \chi(C_i)\ =\ - \deg({\cal R}_{f_i}^{ev})\ =\ - \ell^{ev}.
\end{equation}
\end{proof}

\vspace{.6cm}


\setcounter{equation}{0}
\section{Bundles of Eigenspaces}
\label{section7}
\bigskip

In Section~\ref{section5} we constructed curves  $\C_\zeta\to\Delta$   over the disk  whose general fibers  are smooth and whose central fiber $C_0$ is a   union  $C_1\cup  E \cup C_2$ of nodal curves  where $C_1$ and $C_2$ are disjoint and
$$
\bar{E}=\cup_j \bar{E}_j
$$
where each $\bar{E}_j$ is the chain of rational curves (\ref{Adiagram}).  For simplicity, we will drop $\zeta$ from our notation.  There is also  is a line bundle $\N\to \C$ whose restriction $N_s$  to each  fiber $C_s$ comes with the bundle map $R_s$ described in Theorem~\ref{theorem6.3} and the one-parameter family of operators
$$
L_t\  =\  \del+tR_s
$$
 To take adjoints, we fix  a hermitian metric on $\N$ and a Riemannian metric $g$ on $\C$, with $g$ chosen to be  Euclidean in the local coordinates $(x,y,s)$ around in node of $C_0$ (as described in Section~\ref{section5}).

On each curve $C_s$, the operator $L_t^*L_t$ on $N_s$ has non-negative real  eigenvalues $\{\la\}$ that vary continuously with $s$ for $s\not=0$.     Given a function $\la_1(s) >0$ on $\Delta$  (we will fix a value later),   consider the family of    vector spaces $\E\to \Delta$ whose fiber over $s$ is spanned by the  low eigensections as in (\ref{3.defEF}):
\begin{equation}
\label{Elowspace}
\E_s\ =\ \mbox{span}_\R\, \left\{ \xi\in L^2(C_s; N_s) \ \big| \   \mbox{$L_t^*L_t\xi=\la\xi$ for $\la<\la_1$}         \right\}.
\end{equation}
The eigensections of  $L_tL_t^*$ give a similar family  $\F\to\Delta$ of $L^2$ sections:
\begin{equation}
\label{Flowspace}
\F_s\ =\ \mbox{span}_\R\, \left\{ \eta\in L^2(C_s; \overline{K}_{C_s}\otimes N_s) \ \big| \   \mbox{$L_tL_t^*\eta=\la\eta$ for $\la<\la_1$}         \right\}
\end{equation}
and $L_t$ is a bounded finite-dimensional linear map $L_t: \E_s\to \F_s $.  In general, the  dimension of such eigenspaces can jump as $s$  varys.   This section  establishes  conditions under which  $\E$ and $\F$  are actually vector bundles over $\Delta$.

\medskip

We will show that the spaces of $\E_s$ can be modeled on the space of holomorphic sections of $\N$ along the central fiber $C_0$.

 \begin{lemma}
 \label{WjLemma}
 Let $\E_0 =\{\mbox{\rm continuous }\psi\in H^0(C_0, N_0)\}$. There are  $L^2$ orthogonal decompositions
 \begin{equation}
\label{EFlowhigh}
 \E_0 = W \oplus \E_0' \hspace{1.6cm}    W =   \bigoplus_{ j | m_j \, even}\hspace{-2mm}  W_j 
 \end{equation}
 where $W=\ker L_t\cap\E_0$, each $W_j$  is a 1-dimensional complex space  and  $\E' _0 \cong  H^0(C_1, N_1) \oplus H^0(C_2, N_2)$.  Furthermore, $\F_0=\ker L_{0,t}|_{C_0}$   has real dimension $2\ell^{ev}$.
 \end{lemma}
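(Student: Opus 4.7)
The approach is component-by-component: by Theorem~\ref{TwistingProp}(c,d) the bundle $N_0=\N_\zeta|_{C_0}$ is explicitly described on each irreducible component of $C_0$, and by Theorem~\ref{theorem6.3}(b) together with the fact that the divisor $\hat{A}$ of $R_\zeta$ constructed in Corollary~\ref{cor5.2} contains every chain component with positive multiplicity, the operator $L_{0,t}$ equals $\del$ on each $\bar{E}_j$ and equals $\del+tR_i$ on each $C_i$ with $R_i$ having isolated zero divisor ${\cal R}_{f_i}^{ev}$. Consequently the vanishing theorem (Theorem~\ref{2.vanishingtheorem}) gives $\ker L_{0,t}|_{C_i}=0$ for $i=1,2$, so every $\psi\in W$ vanishes on $C_1\cup C_2$ and restricts to a holomorphic section of $N_0$ on $\bar{E}$ that is zero at the two outermost nodes of each chain $\bar{E}_j$.

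To compute $W$ I would propagate this vanishing inward along each chain (\ref{Adiagram}). For $m_j$ odd, all interior chain components carry $N_0=\O$ and hence admit only constants, while $E_j$ carries $N_0=\O(1)$; the vanishing constraints force the constants to be zero, and a section of $\O(1)$ on $\P^1$ that vanishes at the two distinct nodes of $E_j$ with its neighbours (or with $C_1$ and $C_2$ when $n_j=1$) must itself be zero, so there is no contribution. For $m_j$ even (so $n_j>1$), the only components with $N_0=\O(1)$ are the two outermost $E^i_{j;n_j-1}$; vanishing at the node with $C_i$ cuts each of these to a one-parameter family, while the chain of matching conditions forces every interior constant, as well as the section on $E_j$, to equal a single common value $c$, which in turn rigidly determines each outermost parameter. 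This yields a complex one-dimensional subspace $W_j$ for each even $m_j$, proving $W=\bigoplus_{j\,|\,m_j\text{ even}}W_j$.

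Next I would introduce the restriction map $r\colon\E_0\to H^0(C_1,N_1)\oplus H^0(C_2,N_2)$, $\psi\mapsto(\psi|_{C_1},\psi|_{C_2})$. The preceding analysis identifies $\ker r$ with $W$, and repeating it with prescribed rather than zero values at the outermost nodes shows that $r$ is surjective: for $m_j$ odd the extension along the chain is uniquely determined by $(\xi_1,\xi_2)$, while for $m_j$ even it is a one-parameter affine family modelled on $W_j$. Defining $\E_0'$ to be the $L^2$-orthogonal complement of $W$ inside $\E_0$ then makes $r|_{\E_0'}$ a bijection onto $H^0(C_1,N_1)\oplus H^0(C_2,N_2)$, which gives the decomposition (\ref{EFlowhigh}).

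For the final claim, I would compute $\F_0=\cok L_{0,t}$ directly, using that $(0,1)$-forms with values in $N_0$ are not required to match at nodes, so $\cok L_{0,t}$ splits as a direct sum over the irreducible components of $C_0$. Each chain component contributes either $H^1(\P^1,\O)=0$ or $H^1(\P^1,\O(1))=0$, so the total chain contribution vanishes. On each $C_i$ the vanishing theorem gives $\ker L_{0,t}|_{C_i}=0$, so Lemma~\ref{indexlemma} yields $\dim_\R\cok L_{0,t}|_{C_i}=\ell^{ev}$; summing over $i=1,2$ produces $\dim_\R\F_0=2\ell^{ev}$. The main technical point I expect to require care is the local coordinate bookkeeping on chains with even $m_j$, needed to confirm that each outermost $\O(1)$-parameter is rigidly determined by the single interior constant $c$ rather than being independently free.
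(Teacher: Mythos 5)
Your proposal is correct and takes essentially the same route as the paper: the vanishing theorem forces kernel elements to be supported on the chains, Theorem~\ref{TwistingProp}d gives the chain-by-chain computation showing $W_j=0$ for odd $m_j$ and $W_j\cong\cx$ for even $m_j$, extension/restriction along the chains (unique modulo $W$) identifies $\E_0'$ with $H^0(C_1,N_1)\oplus H^0(C_2,N_2)$, and $\F_0$ is computed componentwise from the vanishing theorem and Lemma~\ref{indexlemma}. The only cosmetic differences are that you phrase the middle step via the restriction map and its kernel rather than the paper's unique extension perpendicular to $W$, and you work with $\cok L_{0,t}$ instead of $\ker L_{0,t}^*$, which are equivalent formulations.
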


 \begin{proof}
Because $R$ is non-trivial on $C_1\cup C_2$  and trivial on $\bar{E}$, the  proof of Theorem~\ref{2.vanishingtheorem}  shows that any continuous $\psi\in\ker L_t$  vanishes on $C_1\cup C_2$ and is holomorphic on $\bar{E}$,  so lies in the   direct sum  of the $L^2$ orthogonal   complex vector spaces
\begin{equation}
\label{defWj}
 W_j=\left\{\mbox{continuous  $\psi\in H^0(C_0,N_0)$ with support on $\bar{E}_j$}  \right\}.
\end{equation}
If $m_j$ is odd, $N_0$ is $\O(1)$ on the center component  of $\bar{E}_j$ and is trivial the other irreducible components;  the boundary conditions (\ref{defWj}) then imply that $W_j=0$.  If $m_j$ is even, $N_0$ is $\O(1)$ on the first and last  components of $\bar{E}_j$ and trivial on the others;  hence $W_j\cong \cx$ and each $\psi\in W_j$ is constant on   $\bar{E}_j$ except on the end components.  

One similarly sees that each $\psi\in H= H^0(C_1, N_1) \oplus H^0(C_2, N_2)$ extends  continuously  and holomorphically   over  $C_0$;  the extension is  unique  modulo $W$ and hence there is a unique extension $\bar{\psi}$  perpendicular to $W$.  Let  $\E_0' \cong H$ denote the set of all extensions.  Then for each continuous $\xi\in H^0(C_0, N_0)$ there is a $\bar{\psi}\in \E_0'$ so that $\xi-\bar{\psi}$ has support in $\bar{E}$, and therefore lies in $W$ as above.  Thus $\E_0$ decomposes as in   (\ref{EFlowhigh}).

 Finally,  note that the restriction of each $\eta\in \F_0=\ker L_{0,t}^*$ to each component of $\bar{E}$ satisfies $(\del^*+tR^*)\eta=0$ with $R=0$, so  by Theorem~\ref{TwistingProp}d lies in $H^{01}(\P^1,\O)=0$ or $H^{01}(\P^1,\O(1))=0$.  Thus $\eta=\eta_1+\eta_2$ where $\eta_i$ lies in the kernel of the operator $L_i=L_{0,t}|_{C_i}$.  But Theorem~\ref{2.vanishingtheorem} and Lemma~\ref{indexlemma}  show that
$$
\dim \ker L^*_i\ =\ \dim \ker L_i-\ind L_i\ =\ 0- (-\ell^{ev})\ =\ \ell^{ev}
$$
so we conclude that $\F_0$ has real dimension $2\ell^{ev}$.
 \end{proof}

\medskip

The following theorem shows that the  decomposition Lemma~\ref{WjLemma}  on the nodal curve $C_0$ carries over to  nearby smooth curves.  Parts (a)  and (b) cover the case where $|t|$ is small,  part (d) covers the case where $|t|$ is large, and (c) holds for all $t$.  The upshot is that the low eigenspaces are of three types:  one whose eigenvalues grow linearly with $t$, 
one whose eigenvalues are logarithmically small in $|s|$, and one whose eigenvalues are bounded by $|s|^2(1+t^2)$ and which splits  as a  sum of 2-dimensional eigenspaces. 
\begin{equation}
\label{7.defl(s)}
\la_1(s) = \frac{c_0}{|\log |s||}
\end{equation}

\begin{theorem}
\label{Elowtrivial}
(a) There is a  $c_0>0$ such that,  with $\la_1(s)$ as in (\ref{7.defl(s)}) and  $0<|s|, |t| \ll 1$,  the  low eigenspaces (\ref{Elowspace}) and (\ref{Flowspace})   form  vector bundles $\E_W,  \E'$ and $\F'$ over $\Delta$ and $\F^0$ over $\Delta\ssetminus \{ 0\}$    and  a  diagram of bundle maps
\begin{align} 
\label{kervsE2}
 \begin{CD}
\Delta \times \left(W \oplus   \E_0'\right)   @>\Phi>\cong>  \, \E_W  \oplus \E' \\
 @.     @VV{L_t} V  \\
 @.\F^0 \oplus\F'
 \end{CD}
\end{align} 

\noindent (b) There are positive constants $C_1, C_2,  C_3$ such that for $t\not= 0$ 
\begin{equation} 
\label{7.Evaluebounds}
\E_W  =  \bigoplus \left\{ E_\la\, \big|\, \la\leq C_1|s|^2(1+ t^2) \right\} \qquad  \E' = \bigoplus \left\{ E_\la\,\big|\, C_2t^2 \leq \la\leq C_3 (|s|^2+t^2) \right\}.
\end{equation}

\noindent (c)  For   $t\not= 0$ and $|s|\ll 1+t^2$,  the first component of $\Phi$ is a bundle isomorphism 
\begin{equation}
\label{kervsE2b}
 \begin{CD}
\Delta \times  \bigoplus_j  V_j   @>\Phi^V>\cong> \bigoplus_j \, \E_j   \\
 \end{CD}
\end{equation}
where the $\E_j$ are  real rank 2  bundles that are $L^2$ orthogonal up to terms of order $O(|s|\sqrt{1+t^2})$. \vspace{1mm}

\noindent
(d)  For each $\tau>0$ there is a $\delta>0$ such that     (\ref{kervsE2b}) is an isomorphism onto the sum of the eigenspaces with eigenvalue $\la \leq C_1|s|^2(1+t^2)$ whenever  $|t|\geq \tau$ and  $|s|<\delta$.  
 \end{theorem}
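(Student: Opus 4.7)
The plan is to construct approximate eigensections by transplanting the kernel decomposition $\E_0 = W \oplus \E_0'$ of Lemma~\ref{WjLemma} from $C_0$ to each smooth fiber $C_s$, and then to combine upper Rayleigh-quotient bounds with lower bounds derived from the Vanishing Theorem~\ref{2.vanishingtheorem} and a cylindrical Poincar\'e inequality on the long tubes that replace the chains $\bar E_j$ in $C_s$.  For $\psi_0 \in \E_0'$, which extends holomorphically and continuously across $C_0$, use a cutoff in the neck regions $xy = \zeta s^{n_j}$ of $C_s$ to produce a smooth section $\tilde\psi_s$ whose restrictions to $C_1$ and $C_2$ agree with $\psi_0$.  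For $\psi_0 \in W_j$, which on $\bar E_j$ reduces to a locally constant or $\O(1)$-valued section vanishing at the outer nodes, transplant it to the corresponding long tube of $C_s$ via a smooth cutoff.  Define the map $\Phi$ in (\ref{kervsE2}) as the $L^2$-orthogonal projection of these transplants onto the direct sum of the low eigenspaces of $L_t^*L_t$; injectivity in a neighborhood of $(s,t)=(0,0)$ follows from the decomposition at $s=0$ together with continuity of spectral projectors in $(s,t)$.

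For the eigenvalue bounds in (b), compute using the explicit local form (\ref{5.Requation}) of $R$.  For $\psi_0 \in \E_0'$: the cutoff contributes $\|\del\tilde\psi_s\|^2 = O(|\log|s||^{-1})$ while $\|tR\tilde\psi_s\|^2 = O(t^2)$, so $\la \leq C_3(|s|^2 + t^2)$; the lower bound $\la \geq C_2 t^2$ comes from Property~(\ref{2.threeproperties}b), which yields $\|R\tilde\psi_s\| \geq c\|\tilde\psi_s\|$ since $\psi_0|_{C_1 \cup C_2} \not\equiv 0$.  For $\psi_0 \in W_j$: both $\|\del\tilde\psi_s\|^2$ and $\|tR\tilde\psi_s\|^2$ are supported in the long tubes where $|R| = O(|x|)$ and the transplant is nearly constant, and a direct computation in the local coordinates of Step~5 of Section~\ref{section5} gives $\|L_t\tilde\psi_s\|^2 = O(|s|^2(1+t^2))$ after accounting for the weight $|x|^{2p}$ in $|R|^2$ and the Euclidean area form.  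The min--max principle then produces at least $\dim W + \dim \E_0'$ eigenvalues in these bands, while the index count of Lemma~\ref{indexlemma} forces equality.  The remaining spectrum lies above $\la_1(s) = c_0/|\log|s||$ by a cylindrical Poincar\'e inequality: on each long tube, the smallest Rayleigh quotient on the complement of the near-constant modes is of order $1/|\log|s||$, and choosing $c_0$ small enough creates the required spectral gap.

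For (c), distinct $W_j$'s have disjoint supports on $\bar E$, so their transplants on $C_s$ have $L^2$-overlap bounded by $O(|s|\sqrt{1+t^2})$; the rank-$2$ real subbundles $\E_j \subset \E_W$ obtained by projection inherit this approximate orthogonality, and each has rank $2$ because $\dim_\cx W_j = 1$.  Part (d) is immediate once $|t| \geq \tau > 0$: the lower bound $\la \geq C_2\tau^2$ on $\E'$ separates it from the $\E_W$ band $\la \leq C_1|s|^2(1+t^2)$ for all $|s|$ sufficiently small depending on $\tau$, so every eigenvalue below the upper bound for $\E_W$ must lie in $\E_W$, and $\Phi^V$ is surjective onto the full low spectrum.

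The main technical obstacle is the cylindrical Poincar\'e estimate and the identification of the logarithmic scale $\la_1(s)$.  On a tube of conformal modulus of order $|\log|s||$, the only modes with Rayleigh quotient of order $1/|\log|s||$ are approximately constant; showing that these are exhausted by transplants of $W$ requires matching boundary conditions at the ends of the tube to precisely the continuity conditions at the nodes of $\bar E_j$ that determined $W_j$ in Lemma~\ref{WjLemma}.  This matching is what rules out spurious low modes, forces the scale (\ref{7.defl(s)}), and ensures that the low eigenspace is continuous in $s$ across $s=0$, which is in turn what gives the bundle structure in~(a).
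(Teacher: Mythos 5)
Your overall strategy is the paper's: transplant the decomposition $\E_0=W\oplus\E_0'$ of Lemma~\ref{WjLemma} to $C_s$ by cutoffs, project onto the low eigenspaces, get the bands in (b) from Rayleigh-quotient upper bounds plus a lower bound via the Vanishing Theorem, get (c) from the disjoint supports of the $W_j$, and get (d) by separating the two bands. But two steps, as written, do not hold up. First, your cutoff estimate $\|\del\tilde\psi_s\|^2=O(|\log|s||^{-1})$ for transplants of $\E_0'$ does not give the claimed band $\la\leq C_3(|s|^2+t^2)$, and it is in tension with your own gap scale: the gap sits at $\la_1(s)=c_0/|\log|s||$ with $c_0$ forced to be \emph{small}, so a transplant with energy $C/|\log|s||$ need not lie below the gap at all. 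The paper avoids this by not using a naive cutoff: inside each neck one extends $\psi=f\nu$ \emph{holomorphically} by $F(x,y)=f(x,0)+f(0,y)-f(0,0)$, matches it to the outer extension (which agrees with it to order $|s|$ on the gluing annulus), and obtains $\|L_t\widehat\psi\|^2\leq c(|s|^2+t^2)\|\widehat\psi\|^2$, and $\leq c|s|^2(1+t^2)$ for $\psi\in W$; these sharper bounds are what make both (b) and the gap argument work.

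Second, the surjectivity of $\Phi$ --- that there are \emph{no} low eigensections other than the projected transplants --- is the crux, and your argument for it fails. The index of $L_{s,t}$ on $C_s$ is $0$ (Lemma~\ref{indexlemma}); an index counts $\dim\ker-\dim\cok$ and gives no control whatsoever on the number of small eigenvalues, so ``min--max produces at least $\dim W+\dim\E_0'$ eigenvalues and the index forces equality'' is not a valid inference. Your alternative sketch (cylindrical Poincar\'e inequality plus matching of boundary conditions at the tube ends) is precisely the hard point, and you acknowledge it without proving it: one must rule out spurious modes that are nearly constant on a chain but violate the continuity/vanishing conditions defining $W_j$, and modes whose $L^2$ mass concentrates at the necks. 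The paper handles this in the Appendix (Lemma~A.1) by a compactness argument: a sequence of unit eigensections orthogonal to the image of $\Phi_s$ with $\la_n|\log|s_n||\to 0$ would, after an elliptic bootstrap away from the nodes and a neck Poincar\'e inequality (Lemma~A.2) excluding concentration at the nodes, converge to a nonzero holomorphic section on $C_0$; the key estimate $|\xi_0(p')-\xi_0(p'')|\lesssim(\la_n|\log|s_n||)^{1/2}$ shows the limit is continuous across each node, hence lies in $\E_0$ (resp.\ $W$), contradicting orthogonality. This is also where the logarithmic scale (\ref{7.defl(s)}) genuinely enters. Without an argument of this kind (or a quantitative substitute), parts (a), (b) and (d) of the theorem are not established.
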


The proof of Theorem~\ref{Elowtrivial} occupies the rest of this section.  The method is straightforward:  transfer elements of $\ker L_t$  on  $C_0$ to $C_s$ by extending and cutting off, then estimate using   the coordinates introduced in Section~\ref{section5}.

\begin{proof}
    For each node $p$ of $C_0$, the construction of Section~\ref{section5} provides  coordinates $(x,y)$ on a ball $B(p,\ep)\subset\C$  in which  $C_s =\{xy=\zeta s\}$.  After shrinking $\ep$ we may assume these balls are disjoint and that on each ball there is  a local holomorphic section $\nu$ of $\N$ with $\frac12\leq|\nu|^2\leq 2$ pointwise.  Let
$B(\ep)$ be the union of these balls.  Each $\psi\in W\oplus \E_0'$ is continuous and can be extended as follows:
\begin{itemize}
\item  On  $C_0\cap B(\ep)$,  $\psi$ has the form $f\nu$ for some continuous holomorphic function. Extend this to the section $\psi^{in}=F\nu$ by setting
$$
F(x,y)=f(x,0)+f(0,y)-f(0,0)
$$
on each $B(p,\ep)$.  This extension is continuous, holomorphic and agrees with $\psi$ along $C_0$.
\item The construction of Section~\ref{section5}  shows that  $C_0\ssetminus B(\ep)$ is a disjoint union of embedded smooth curves.  Hence we can  extend $\psi$ to a smooth section  $\psi^{out}$ of $\N$ on a neighborhood of $C_0\ssetminus B(\ep)$ by parallel translation in the normal directions;  the normal component of  $\nabla\psi^{out}$ then  vanishes along $C_0$.

\end{itemize}
To merge  the above extensions, fix a  smooth bump function $\beta_\ep$  supported on $B(2\ep)$ with $\beta_\ep=1$ on $B(\ep)$ and with  $0\leq \beta_\ep\leq 1$ and $|d\beta_\ep|\leq 2/\ep$ everywhere.  Then
\begin{equation}
\label{6.Psibeta}
\widehat{\psi}\ =\ \beta_\ep \psi^{in} + (1-\beta_\ep) \psi^{out}
\end{equation}
is a smooth extension of $\psi$ to a section of $\N$  on a neighborhood of $C_0$.  After choosing an $L^2$ orthonormal basis $\{\psi_k\}$ of $W\oplus\E'_0$, this construction creates extensions 
$\{\widehat{\psi}_k\}$.    We can then define a linear map $\Psi_s:W\oplus \E_0\to C^\infty(C_s,N_s)$ for each small $s$ by setting
\begin{equation}
\label{6.PsiHat}
\Psi_s(\psi_k)\ =\ \psi_{k,s} \quad \mbox{where $\psi_{k, s}= \widehat{\psi}_k\big|_{C_s}$}
\end{equation}
for each basis vector $\psi_k$ and extending  linearly. For each $j$, $\psi_s=\psi_{k,s}$ is continuous,   holomorphic on $C_s\cap B(\ep)$, and satisfies the following    bounds for $|s|<1$:
\begin{enumerate}
\item[(i)]  Because  $\psi^{in}$ and $\psi^{out}$ are continuous extensions of  $\psi$, we have $ |\psi^{in}_s -\psi_s^{out}| \leq c_1(\ep)|s|$  on  the  region $A_s(\ep) =C_s\cap(B(2\ep)\ssetminus B(\ep))$, which contains the support of $d\beta_\ep$.  
\item[(ii)]   On the complement of $B(\ep)$, the  curves $C_s$ converge to $C_0$ in $C^1$ as $s\to 0$ and $\del\psi_{k,0}=0$.  Hence
$|\del\psi_{k,s}^{out}|\leq c_2(\ep) |s|$  on  the support of $1-\beta_\ep$.
\end{enumerate}
The  $L^2$ norm of $L_t\psi_s =   \del\beta_\ep (\psi^{in}_s -\psi_s^{out}) + (1-\beta_\ep)\, \del\psi^{out}_s +tR\psi_{k,s}$ therefore  satisfies
\begin{equation}
\label{6.sbound}
 \|L_t \psi_{k,s}\|^2
\ \leq\  c_3|s|^2\left(  \int_{A_s(\ep)}  \frac{8}{\ep^2}\ +\   \mbox{Area}(C_s)\right)\ + c_4 t^2  \|\psi_{k,s}\|^2
 \ \leq\ c_5\, (|s|^2+t^2)\, \|\psi_{k,s}\|^2
\end{equation}
 where the last inequality holds because  $R$ is bounded and  $\|\psi_{k,s}\|\to\|\psi_k\|=1$  as $s\to 0$. 

 If $\psi_k\in W$ then  (\ref{6.sbound}) can be strengthened.  There is a basis $\{\psi_j\}$ of $W$ where the
 support of  $\psi_j$ lies in an even chain $\bar{E}_j$ and $R=0$  along that chain;   we therefore have $|R\psi_{j,s}|\leq c_6|s| |\psi_{j,s}|$ outside the $2\ep$-balls around the even nodes $p_j^i$.  In those $2\ep$-balls, there are local coordinates $(x, y)$  in which $xy=\zeta s$ on $C_s$ and $R$ has the form (\ref{5.Requation}) and $\psi_k =by$ for some $b\in\cx$ (cf. Theorem~\ref{TwistingProp}d).  Therefore $|R \psi_{j,s}| \leq c_3 |\bar{x}\bar{y}| = c_7 |s|$ and   (\ref{6.sbound}) becomes
\begin{equation}
\label{6.sboundW}
 \|L_t \psi_{j,s}\|^2
\ \leq\ c_8\, |s|^2(1+ t^2)\, \|\psi_{j,s}\|^2.
\end{equation}
The constant $c_5$ and $c_8$ can be taken independent of $j$ and $k$, and hence (\ref{6.sbound}) holds for all   $\psi\in \E_0$ and  (\ref{6.sboundW}) holds for all   $\psi\in W$.
 
 We also have a lower bound on $\|R\psi_s\|$ for $\psi_s\in\E'$.   In this case, $\psi$ is holomorphic and is non-zero on an open set in $C_1\cup C_2$.  The   facts that $|R|$ is non-zero almost everywhere on $C_i$   and $\|\psi_s\|\to\|\psi_k\|=1$  as $s\to 0$ imply that, for small $|s|$, 
  \begin{equation}
\label{6.lowerbound}
 \|R \psi_s\|^2
 \ \geq \ \int_{C_i\ssetminus B(2\ep)}  t^2 |R|^2\, |\psi_s|^2
 \ \geq\ c_9t^2 \int_{C_i\ssetminus B(2\ep)}   |\psi_s|^2
 \ \geq\ c_{10}t^2  \|\psi_s\|^2.
\end{equation}

\medskip

   At this point we can define $\E$ and the decomposition $\E=\E_W\oplus \E'$ by projecting onto  low eigenspaces.  For this we assume that $s$ is not zero and is small enough that $|s|< c_5(|s|^2+t^2) <\frac12  \la_1(s)$ with $\la_1(s)$ as  in (\ref{7.defl(s)}). Applying Lemma~\ref{LAlemma} below  twice  shows that:
   \begin{itemize}
   
   \item The composition $\Phi_s= \pi_{s} \Psi_s: \E_0 \to \E_s$ of $\Psi_s$ with the $L^2$ orthogonal projection into  the sum of the eigenspaces $E_\la$ on $C_s$  with  $\la\leq c_5(|s|^2+t^2)$  is an isometry up to terms of order  $O(|s|+|t|)\, )$.  
      
\item The composition $\Phi_s^W= \pi_{s} \Psi_s^W: W \to \E_s$ of $\Psi_s^W$ with the $L^2$ orthogonal projection into  the sum $\E_W$ of the eigenspaces $E_\la$ on $C_s$  with  $\la\leq c_8 |s|^2(1+t^2)$ is an isometry up to terms of order $O(|s|\sqrt{1+t^2}\, )$; it  has the form $\pi_W\Phi_s$ for small $|s|$ and $|t|\leq T$.  
\end{itemize}
 Because basis elements $\{\psi_j\}$ of $W=\oplus W_j$ have  disjoint support, the image   $\Phi^W(\oplus W_j)$ defines real rank 2 subbundles $\E_j\subset \E_W$   as in (\ref{kervsE2b}).

 Now let  $\E'$ be the orthogonal complement of $\E_W$ in $\E$. Each eigenvector  $\psi\in\E'$ with eigenvalue $\la$  and norm 1 can be written as an orthogonal sum $\psi_{s}+v$ with $\psi_s$ in the image of (\ref{6.PsiHat}) and  $v\in\E_W$ satisfying $\|v\|\leq c_8(\sqrt{\ell(s)} +|t|)\, \|\psi_s\|$.    We then obtain a lower bound on $\la=\|L_t\psi\|^2$  using  (\ref{2.vanishintegral}),  the inequality $2(a+b+c)\geq a^2-4b^2-4c^2$ and (\ref{6.lowerbound}), noting that $R$ is bounded and $\psi$ has unit norm:
$$
\la\ \geq\  t^2\|R\psi\|^2
\ \geq\ \frac{t^2}{2}\left[ \|R\psi_s\|^2 - 4 \|Rw\|^2 - 4\|Rv\|^2\right]
\ \geq\ \frac{t^2}{4}\left[ c_{11} -c_{12}\left(\ell(s) +|t|^2\right)\right].
$$
For small $|s|$ and $|t|$, this gives the inequality $\la\geq C_2t^2$ in (\ref{7.Evaluebounds}).

\medskip

 In fact, one can choose the constant $c_0$ in the definition (\ref{7.defl(s)}) of  $\la_1$ so that  $\Phi_s:\E_0\to\E_s$   is surjective.   The proof, which is crucial but rather technical, is given in the appendix.

\medskip

To finish, set $\F^0=L_t(\E_W)$ and $\F'=L_t(\E')$ and observe that  $L_t$ maps the non-zero eigenspaces of $L_t^*L_t$ isomorphically to the eigenspaces of $L_tL^*_t$ with the same  eigenvalues. But $\ker L_t = 0$ for $s\not = 0$ by Theorem~\ref{theorem6.3} and $\ker L_t=W$ on $C_0$ by Lemma~\ref{WjLemma}, so after shrinking $\Delta$, $\F'$ is a bundle over $\Delta$ and $\F^0$ is a bundle over $\Delta\ssetminus\{0\}$.  Finally, given $\tau>0$, we have $C_1|s|^2(1+\tau^2)<\min\{\la(s), C_2\tau^2\}$ for all small $|s|$;  the eigenvalue bounds (\ref{7.Evaluebounds})  then  show that  the sum of the eigenspaces in Theorem~\ref{Elowtrivial}d is exactly $\E_W$.

 \end{proof}

The proof of Theorem~\ref{Elowtrivial}  made use of the following  elementary  lemma.

\begin{lemma}
\label{LAlemma}
Let $L:H\to H'$ be a bounded linear map between Hilbert spaces so  that   all eigenvalues of $L^*L$ lie in $[0, \mu]\cup [\la_1, \infty)$ with $0<\mu < \la_1$.  Consider the  low eigenspace
$$
E_{low} = \bigoplus_{\la\leq \mu}  E_\la
$$
 and suppose that  $V\subset H$ is a  subspace with $|Lv|^2\leq c\mu\, |v|^2$ for all $v\in V$.  Then the orthogonal projection $\pi:V\to E_{low}$ is the identity plus an operator  of order $O(\sqrt{\mu})$.
\end{lemma}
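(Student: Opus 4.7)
\bigskip

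\noindent\textbf{Proof plan for Lemma~\ref{LAlemma}.}
The strategy is a direct spectral decomposition argument. First I would use the fact that $L^*L$ is self-adjoint with spectrum in $[0,\mu]\cup[\la_1,\infty)$ to  orthogonally decompose
\[
H\ =\ E_{low}\oplus E_{high},\qquad E_{high}\ =\ \bigoplus_{\la\geq\la_1} E_\la,
\]
and to write any $v\in V$ uniquely as $v=v_{low}+v_{high}$ with $v_{low}=\pi v\in E_{low}$ and $v_{high}\in E_{high}$. Then $v-\pi v=v_{high}$, so the statement reduces to the estimate $\|v_{high}\|^2 = O(\mu)\,\|v\|^2$.

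The key observation is that $L$ sends eigenspaces of $L^*L$ to eigenspaces of $LL^*$ with the same eigenvalue, and in particular $Lv_{low}$ and $Lv_{high}$ are orthogonal in $H'$: for eigenvectors $\xi_\la,\xi_{\la'}$ with $\la\neq\la'$, we have $\langle L\xi_\la,L\xi_{\la'}\rangle = \langle L^*L\xi_\la,\xi_{\la'}\rangle = \la\langle \xi_\la,\xi_{\la'}\rangle = 0$. Consequently
\[
\|Lv\|^2\ =\ \|Lv_{low}\|^2+\|Lv_{high}\|^2\ \geq\ \|Lv_{high}\|^2\ \geq\ \la_1\,\|v_{high}\|^2,
\]
where the last inequality uses that $L^*L\geq \la_1\, Id$ on $E_{high}$.

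Combining this with the hypothesis $\|Lv\|^2\leq c\mu\,\|v\|^2$ gives
\[
\|v_{high}\|^2\ \leq\ \frac{c\mu}{\la_1}\,\|v\|^2,
\]
so $\|v-\pi v\|=\|v_{high}\|\leq \sqrt{c/\la_1}\,\sqrt{\mu}\,\|v\|$. Since $\la_1$ is fixed by the hypothesis, this is the claimed $O(\sqrt{\mu})$ bound on $\pi-i|_V$. The only minor care needed is in justifying the spectral decomposition when $L^*L$ may not be compact; but the gap in the spectrum between $\mu$ and $\la_1$ allows the spectral projection onto $E_{low}$ to be written as a Riesz projection (a contour integral of the resolvent around $[0,\mu]$), which yields the orthogonal splitting above with no compactness assumption. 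I do not see any substantial obstacle: the entire argument is a one-line Pythagorean estimate in $H'$ once the spectral decomposition is in place.
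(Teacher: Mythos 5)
Your argument is correct and is essentially the paper's own proof: the same orthogonal splitting $v=\pi v + w$, the same observation that $\langle L\pi v, Lw\rangle=0$ since $L^*L\pi v\in E_{low}$ and $w\perp E_{low}$, and the same Pythagorean estimate $\la_1\|w\|^2\leq \|Lv\|^2\leq c\mu\|v\|^2$. The extra remark on using the spectral gap (Riesz projection) to justify the decomposition without compactness is fine but not needed beyond what the paper already assumes.
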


\begin{proof}
 Fix $v\in V$ and write $v=v_0+w$ where $v_0=\pi v$ and $\langle v_0, w\rangle =0$.  Then $\langle Lv_0, Lw\rangle = \langle L^*Lv_0, w\rangle$ vanishes because $L^*Lv_0\in E_{low}$, while $|Lw|^2 \geq \la_1\, |w|^2$ because $w\perp E_{low}$.  Thus
 $\la_1\, |w|^2 \leq |Lw|^2  = |Lv|^2 -  |Lv_0|^2  \leq  c_1\mu |v|^2$, which means  that $|v-\pi v|=|w|\leq c_2 \sqrt{\mu}\,  |v|$.
\end{proof}

\vspace{.6cm}


\setcounter{equation}{0}
\section{Parity formulas}
\label{section8}
\bigskip

As Section~\ref{section7}, we fix a partition $m$, a map $f=(f_1,f_e,f_2)$ in $\M_{m,0}$ and $\zeta\in Q_m$; these data determine maps $f_{\zeta,s}:C_{\zeta, s}\to D_s$.  
 Theorem~\ref{TwistingProp}  shows that  for $s\not= 0$ the restriction of $\N$ is a theta characteristic $N_s$ on $C_s$, so defines a  parity $p(f_{\zeta,s})$.   In fact,  by Theorem~\ref{Injective-Esp},   $p(f_{\zeta, }s)$ is  the TR spectral flow of the  finite-dimensional  linear map
$$
L_{s,t}\  =\  \del+tR_s:  \E_s\to \F_s 
$$
 between the fibers of the bundle of Theorem~\ref{Elowtrivial}.  Moreover, this sign is independent of $s\not= 0$ and $t \not=  0$.   In this section we will express the parity as a product of $2\times 2$  determinants.  

\medskip

  When the partition $m$ is odd,   $f_1$ and $f_2$  themselves have   parities given by
the theta characteristics $N_1$ and $N_2$ on $C_1$ and $C_2$ (cf.  Theorem~\ref{TwistingProp}), and these  determine the parity of $f_{\zeta,s}$.

  \begin{lemma}
\label{oddLemma}
If $m$ is odd  then for every $\zeta\in Q_m$and  $s\ne 0$  the  parity of $f_{\zeta,s}$    is 
$$
p(f_{\zeta,s}) \ =\ p(f_1)\cdot p(f_2).
$$
\end{lemma}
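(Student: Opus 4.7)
My plan is to exploit the fact that the parity equals the sign of a determinant of a finite-dimensional operator which is continuous (hence locally constant) in the family parameter $s$, then push the computation down to the central fiber where the operator splits as a direct sum.

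First, observe that when $m$ is odd every $m_j$ is odd, so each even ramification divisor ${\cal R}_{f_i}^{ev}$ of (\ref{SquareRam}) is empty and $\ell^{ev}=0$. Lemma~\ref{WjLemma} then forces $W=0$ and $\ker L_{0,t}^*=0$, and Theorem~\ref{Elowtrivial} accordingly produces smooth real vector bundles $\E=\E'$ and $\F=\F'$ of equal rank over all of $\Delta$. The eigenvalue bound (\ref{7.Evaluebounds}) shows that eigenvalues on $\E'$ are at least $C_2t^2$, so for any fixed $t>0$ sufficiently small the operator $L_{s,t}:\E_s\to\F_s$ is invertible for every $s\in\Delta$.

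Both bundles $\E$ and $\F$ carry canonical complex orientations, so $\sgn\det L_{s,t}$ is well defined and, because $L_{s,t}$ is invertible throughout $\Delta$, this sign is continuous in $s$ and therefore constant. By Theorem~\ref{Injective-Esp}, for $s\ne 0$ the spectral flow $SF^{TR}(L_{s,\cdot})=p(f_{\zeta,s})$ agrees with this sign. Passing to the limit $s\to 0$ yields
$$
p(f_{\zeta,s})\ =\ \sgn\det L_{0,t}
$$
for every $\zeta\in Q_m$ and $s\ne 0$, reducing the lemma to a computation on the central fiber. Under the identification $\E_0\cong H^0(C_1,N_1)\oplus H^0(C_2,N_2)$ of Lemma~\ref{WjLemma}, a pair $(\psi_1,\psi_2)$ is represented by its continuous holomorphic extension $\hat\psi$ to $C_0$. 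Because $R\equiv 0$ on the chains $\bar E$ and $\hat\psi$ is holomorphic there, $L_{0,t}\hat\psi=tR\hat\psi$ vanishes on $\bar E$ and equals $tR_i\psi_i=L_i\psi_i$ on each $C_i$. Thus $L_{0,t}$ is block diagonal with blocks the operators $L_i=\del+tR_i$ on $C_i$. Since the divisor of $R_i$ is ${\cal R}_{f_i}^{ev}=0$, Corollary~\ref{2.tRfcor} and the Vanishing Theorem~\ref{2.vanishingtheorem} give $\ker L_i=0$ for $t\ne 0$, and Theorem~\ref{Injective-Esp} then yields $\sgn\det L_i=p(f_i)$. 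Multiplying the two block determinants produces
$$
p(f_{\zeta,s})\ =\ \sgn\det L_{0,t}\ =\ \sgn\det L_1\cdot\sgn\det L_2\ =\ p(f_1)\,p(f_2).
$$

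The main obstacle is making the block-diagonal decomposition fully honest at $s=0$: one must verify that the trivialization $\Phi$ of $\E$ from Theorem~\ref{Elowtrivial} identifies the complex orientation of $\E_0$ with the product orientation on $H^0(C_1,N_1)\oplus H^0(C_2,N_2)$, and that the image $\F_0=L_{0,t}(\E_0)$ splits analogously, so that the two determinants genuinely multiply to the determinant on the whole space. Both facts should follow from the explicit extension formula (\ref{6.PsiHat}), which at $s=0$ restricts to $\psi_i$ on $C_i$ together with a canonical holomorphic extension on each chain, preserving the complex structure and direct-sum structure verbatim.
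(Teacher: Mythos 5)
Your proposal is correct and follows essentially the same route as the paper: both arguments use the constancy in $s$ of $\sgn\det L_{s,t}$ on the low-eigenspace bundles of Theorem~\ref{Elowtrivial} (with $W=0$ since $m$ is odd) to reduce to the central fiber, where $L_{0,t}=tR_0$ on $\E_0\cong H^0(C_1,N_1)\oplus H^0(C_2,N_2)$. Your extra block-diagonal step on $C_1$ and $C_2$ is just a repackaging of the paper's direct observation that $tR_0$ is a complex anti-linear isomorphism, whose determinant has sign $(-1)^{h^0(N_1)+h^0(N_2)}=p(f_1)\,p(f_2)$.
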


\begin{proof}
 If $m$ is odd, Lemma~\ref{WjLemma} shows that $W=0$  and   the complex dimension of $\E_0$ is $h^0(N_1)+ h^0(N_2)$.  By  the discussion in Section~\ref{section3}, $p(f_{\zeta,s})$ is $\sgn \det L_{s,t}:\E'_s\to \F'_s$, and this is  independent of $s$  for small $|s|$ and $|t|$ in the trivialization of  Theorem~\ref{Elowtrivial}a.  But for $s=0$,   $L_{0, t}=t R_0|_{\E_0}$ is a complex anti-linear isomorphism and therefore, as in (\ref{3.SFformula2}),  
 $$
 \sgn \det L_{0,t}\ =\ (-1)^{h^0(N_1)+ h^0(N_2)}\ =\ p(f_1)\cdot p(f_2).
 $$
\end{proof}

 \bigskip

  If $m$ is not an odd partition, the parity can be partially computed by the method of Lemma~\ref{oddLemma}.

\begin{theorem}
\label{evenThm}
For each partition $m$ and  $s\ne 0$, and for every $\zeta\in Q_m$  and $t\not= 0$,  the  parity of $f_{\zeta,s}$    is given by
 \begin{equation}
 \label{parityformula-even2}
p(f_{\zeta,s})\ =\ (-1)^{h^0(N_1)+ h^0(N_2)}    \prod_{j\,|\, m_j\,  even}\hspace{-2mm}  \sgn  \det  L_t  \Big|_{\E_j}.
 \end{equation}
\end{theorem}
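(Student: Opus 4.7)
The plan is to identify $p(f_{\zeta,s})$ with the signed determinant of the finite-dimensional operator $L_{s,t}$ on the low eigenspace bundles from Theorem~\ref{Elowtrivial}, to block-diagonalize this operator according to the decompositions $\E_s=\E_{W,s}\oplus \E'_s$ and $\F_s=\F^0_s\oplus \F'_s$, and to evaluate the two diagonal block determinants separately. The block-diagonal structure is immediate from the observation that these splittings are orthogonal sums of eigenspaces of $L_t^*L_t$ and $L_tL_t^*$ with disjoint eigenvalue ranges (Theorem~\ref{Elowtrivial}b); since $L_t$ carries an eigenvector of $L_t^*L_t$ with eigenvalue $\lambda$ to an eigenvector of $L_tL_t^*$ with the same $\lambda$, one has $L_t(\E_{W,s})\subset\F^0_s$ and $L_t(\E'_s)\subset\F'_s$. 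Hence
$$
\sgn\det L_{s,t}\ =\ \sgn\det L_t\big|_{\E'_s}\ \cdot\ \sgn\det L_t\big|_{\E_{W,s}}.
$$

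For the first factor I would reduce to the argument of Lemma~\ref{oddLemma}. By TR spectral-flow invariance (Theorem~\ref{Injective-Esp}), the sign $\sgn\det L_t|_{\E'_s}$ is independent of $s$ in the valid range, so it suffices to evaluate it at $s=0$. Theorem~\ref{Elowtrivial}a ensures that $\E'$ and $\F'$ extend as bundles across $s=0$, and Lemma~\ref{WjLemma} identifies $\E'_0\cong H^0(C_1,N_1)\oplus H^0(C_2,N_2)$ as a complex vector space of complex dimension $h^0(N_1)+h^0(N_2)$. Elements of $\E'_0$ are holomorphic on each $C_i$, so $\del$ annihilates them and $L_{0,t}|_{\E'_0}$ reduces to $tR_0|_{\E'_0}$; by property~(\ref{2.threeproperties}a) the map $R_0$ is complex anti-linear. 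Applying formula~(\ref{3.SFformula2}) --- whose proof uses the pairing $\pm\sqrt{-1}\,\lambda_i$ of eigenvalues of an anti-linear operator to produce $(-1)^d$ --- gives
$$
\sgn\det L_t\big|_{\E'_s}\ =\ (-1)^{h^0(N_1)+h^0(N_2)}.
$$

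For the second factor, Theorem~\ref{Elowtrivial}c provides the further splitting $\E_{W,s}=\bigoplus_j \E_j$ into real rank-two subbundles indexed by even $m_j$, orthogonal up to $L^2$-error $O(|s|\sqrt{1+t^2})$. Because $L_t$ preserves orthogonality of eigenvectors with distinct eigenvalues, the images $L_t(\E_j)$ inherit a corresponding near-orthogonality in $\F^0_s$. Writing $L_t|_{\E_{W,s}}$ in an orthonormal basis adapted to this decomposition then produces a block-diagonal matrix to leading order plus off-diagonal perturbations of size $O(|s|\sqrt{1+t^2})$, and a Schur-complement estimate shows that the sign of the full determinant factors as
$$
\sgn\det L_t\big|_{\E_{W,s}}\ =\ \prod_{j:\,m_j\,\text{even}}\sgn\det L_t\big|_{\E_j},
$$
provided the off-diagonal perturbations are dominated by the smallest singular values of the diagonal blocks. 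Multiplying the two factors yields the theorem.

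The main obstacle is the domination estimate in the last step. It requires quantitative lower bounds on the singular values of $L_t|_{\E_j}$ that beat the $O(|s|\sqrt{1+t^2})$ off-diagonal error from Theorem~\ref{Elowtrivial}c, uniformly in $(s,t)$ in the relevant range. These lower bounds must come from the explicit local models around the nodes of $C_0$ built in Sections~\ref{section5}--\ref{section6}, and matching them against the off-diagonal estimate is the delicate analytic point where the detailed eigenvalue bounds of Theorem~\ref{Elowtrivial}b enter the argument.
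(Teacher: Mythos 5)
Your overall architecture matches the paper's proof: parity equals $\sgn\det L_{s,t}$ on the low eigenspaces, the determinant splits over $\E'_s\oplus\E_{W,s}$ because $L_t$ preserves eigenspaces, and the $\E'$-factor is evaluated at $s=0$ via the trivialization of Theorem~\ref{Elowtrivial}a, where $L_{0,t}=tR_0$ is anti-linear and gives $(-1)^{h^0(N_1)+h^0(N_2)}$ as in Lemma~\ref{oddLemma}. The gap is in your treatment of the $\E_W$-factor. You propose a Schur-complement argument in which the off-diagonal perturbations, of absolute size $O(|s|\sqrt{1+t^2})$, must be dominated by the smallest singular values of the diagonal blocks $L_t|_{\E_j}$, and you correctly flag that you lack the required lower bounds. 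In fact no such lower bounds are available at the needed scale: by Theorem~\ref{Elowtrivial}b the eigenvalues on $\E_W$ satisfy $\la\le C_1|s|^2(1+t^2)$, so the singular values of $L_t|_{\E_j}$ are at most of order $|s|\sqrt{1+t^2}$ --- the same order as your off-diagonal error --- and the explicit computation of Section~\ref{section10} ($\det L_{t,j}\approx a\,a_2|s|^2\,\Re(b\bar b_2\zeta_j\overline{\zeta'_j})$) confirms they are genuinely of order $|s|$. An absolute-domination argument therefore cannot close, and the ``delicate analytic point'' you defer is not a technical refinement but the place where this strategy breaks.

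The paper avoids the issue with a \emph{relative} estimate that needs no lower bound at all. Choose the basis of $\E_W$ adapted to the rank-two subbundles $\E_j$ and consisting of eigenvectors $\psi^i_j$ of $L_{s,t}^*L_{s,t}$ with eigenvalues $\la^i_j$. Then the diagonal data are $\|L_{s,t}\psi^i_j\|^2=\la^i_j$, while for $j'\ne j$
\begin{equation*}
|\langle L\psi^i_j,\,L\psi^{i'}_{j'}\rangle|\ =\ |\langle L^*L\psi^i_j,\,\psi^{i'}_{j'}\rangle|\ =\ \la^i_j\,|\langle\psi^i_j,\,\psi^{i'}_{j'}\rangle|\ \le\ c_1|s|\sqrt{1+t^2}\ \la^i_j,
\end{equation*}
using the near-orthogonality of the $\E_j$ from Theorem~\ref{Elowtrivial}c. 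The off-diagonal terms are thus small \emph{compared to the diagonal terms themselves}, uniformly in how small the eigenvalues $\la^i_j$ are; for fixed $t$ and $0<|s|\ll t$ the matrix of $L_{s,t}$ on $\E_W$ is block-dominant in this relative sense, and the sign of its determinant factors as $\prod_j\sgn\det L_t|_{\E_j}$. Replacing your absolute comparison by this identity $\langle L\psi,L\psi'\rangle=\langle L^*L\psi,\psi'\rangle$ for eigenvectors is exactly what is needed to complete your argument.
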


\begin{proof}
Theorem~\ref{Injective-Esp}  again shows that the parity is $\sgn \det L_{s,t}$ where $L_{s, t}$ is the map $L_t$   in Theorem~\ref{Elowtrivial} on the fiber over $s\not= 0$.  Since $L_t$ preserves eigenspaces and $\ker L_t=0$ for non-zero $s$ and $t$, we have
$$
p(f_{\zeta,s})\ =\  \sgn \det L_{s,t}\big|_{\E'_s}\cdot \sgn\det L_{s,t}\big|_{\E_W}.
$$
The first factor is equal to $p(f_1) p(f_2)$ as in the proof of Lemma~\ref{WjLemma}.  To decompose the second factor,  choose an $L^2$ orthonormal basis of $\E_W$  consisting of eigenvectors  $\psi^i_j\in\E_j$  of $L_{s,t}^*L_{s,t}$ with eigenvalues $\la^i_j$.   Then $\|L_{s,t}\psi^i_j\|^2=\la^i_j$, while Theorem~\ref{Elowtrivial}c gives 
$$
|\langle L\psi^i_j, \, L\psi^i_{j'} \rangle|\ =\ 
|\langle L^*L\psi^i_j, \, \psi^i_{j'} \rangle|\ =\ 
 \la^i_j \,|\langle  \psi^i_j, \, \psi^{i'}_{j'}\rangle|
 \ \leq\ c_1|s| \sqrt{1+t^2}\   \la^i_j
 $$
 whenever $j'\not= j$.   Thus for fixed $t$ and $0<|s|\ll t$, the matrix of $L_{s,t}$ on $\E_W$  has a  block form whose off-diagonal entries that are arbitrarily   small compared to the diagonal entries, giving (\ref{parityformula-even2}).
\end{proof}

We conclude this section by observing that (\ref{parityformula-even2}) remains valid when $L_t$ is replaced by a perturbation of the form  $\hat{L}_t =  L_t+\ep tS$ for certain $S$.  Specifically, applying  Theorem~\ref{2.vanishingtheorem}  and  the inequality $2t|(\del\xi, S\xi) |\leq |\del\xi|^2 +t^2|S\xi|^2$, we have
\begin{equation}
\label{7.lastdisplay}
\int_{C_{\zeta,s}} |\hat{L}_t\xi|^2\ =\ \int_{C_s} |L_t\xi|^2 + 2t\ep (\del\xi, S\xi) + \ep^2t^2 |S\xi|^2
\ \geq  \int_{C_{\zeta,s}} (1-\ep)|\del\xi|^2 +t^2\left( |R\xi|^2-\ep |S\xi|^2\right).
\end{equation}
Now recall from (\ref{5.Requation}) that $R$ has the local expansion
$R(\nu)= a\bar{x}d\bar{x}\,\nu$ at each even node $p=p^i_j$. Take $S$ of the same form:  $S(\nu)= b\bar{x}d\bar{x}\,\nu$ near $p$ and bumped down to 0 outside a small neighborhood of $p$.   Then there are constants $c_1, c_2$ such that
 $$
 |S\xi|^2\leq c_1 r^2 |\xi|^2 \leq c_2 |R\xi|^2
 $$
 Substituting into (\ref{7.lastdisplay}) shows that  there is an $\ep_0$ such that  $\ker \hat{L}_t=0$ for all   $\ep\leq \ep_0$.  This means that $\sgn \det \hat{L}_t =\sgn \det L_t$,  so Proposition~7.3 holds with $R$ replaced by
$$
(R+\ep S)(\nu) = (1+\ep b)\, \bar{x}d\bar{x}\,\nu +\dots
$$
 for small $\ep$.  In this sense we are free to replace the leading coefficient   in the Taylor expansion of $R$ by any small perturbation and still have formula (\ref{parityformula-even2}).

\vspace{.6cm}


\setcounter{equation}{0}
\section{Concentrating eigensections}
\label{section9}
\bigskip

The last factor in the parity formula (\ref{parityformula-even2}) is independent of non-zero $s$ and $t$.  In this and the next section we explicitly evaluate (\ref{parityformula-even2})  by first taking $t$ large, and then $s$ small.
The key observation is  that as $t\to\infty$ the elements of $\ker L^*_t$ on $C_0$ concentrate around the points where $R$ vanishes, and that on nearby smooth curves $C_s$ the low eigensections of $L_t^*L_t$  similarly concentrate with essentially explicit formulas.

  \medskip

  On each smooth curve $C_s$, the adjoint of $L_t$ is the map $L_t^*:\Omega^{0,1}(N_s)\to \Omega^0(N_s)$ given by
\begin{equation}
\label{defR*}
  L_t^*\ =\ \del^* + tR^*
\end{equation}
where $R^*$ (the pointwise adjoint of $R$)  is a real bundle map that satisfies $R^* J =-J R^*$.  Thus $R^*$ is zero at those points where $R=0$, and is an isomorphism at all other points of $C_s$.

\begin{lemma}
\label{Aendolemma}
$A= \del R^*+R\del^*$ is a bundle endomorphism and for each $s\not= 0$
\begin{equation}
\label{L*LIntegral}
\int_{C_s} |L_t^*\eta|^2\ =\ \int_{C_s} |\del_t^*\eta|^2 + t\langle\eta, A\eta\rangle + t^2\, |R^*\eta|^2 \qquad \forall \eta\in\Omega^{0,1}(C_s, N_s).
\end{equation}
\end{lemma}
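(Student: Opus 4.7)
The lemma has two parts: that $A$ is zeroth-order (a bundle endomorphism rather than a first-order operator), and the integration-by-parts identity. My plan is to verify the order statement by a direct local computation in the frame used in the proof of Lemma~\ref{Rlemma}, then obtain the integral formula by formally expanding $L_tL_t^*$.

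For the first claim I would fix $p\in C_s$, a local holomorphic coordinate $z$ in which the metric is Euclidean to second order, and a local holomorphic unit section $\nu$ of $N_s$. Writing $R(\nu)=\bar g\, d\bar z\,\nu$ with $g$ holomorphic, the same calculation used for Lemma~\ref{Rlemma} yields $R^*(d\bar z\,\nu)=\bar g\,\nu$ and $\del^*(h\,d\bar z\,\nu)=-h_z\,\nu$ at $p$. Since $R$ and $R^*$ are conjugate-linear and $\bar g_z=0$, applying these to an arbitrary $\omega=h\,d\bar z\,\nu$ gives
\begin{equation*}
\del(R^*\omega)=\bigl(\overline{h_z}\,\bar g+\bar h\,\overline{g_z}\bigr)\,d\bar z\,\nu,\qquad R(\del^*\omega)=-\overline{h_z}\,\bar g\,d\bar z\,\nu
\end{equation*}
at $p$. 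The terms involving derivatives of $h$ cancel, leaving $A\omega=\bar h\,\overline{g_z}\,d\bar z\,\nu$, which depends only on the pointwise value of $\omega$. Thus $A$ is a conjugate-linear bundle endomorphism. One may view this as the companion on $\Omega^{0,1}$ of property (c) of Lemma~\ref{Rlemma}: the holomorphicity of $g$ makes the cross term in $L_t^*L_t$ on $\Omega^0$ vanish outright, and the same ingredient forces the principal symbols of $\del R^*$ and $R\del^*$ on $\Omega^{0,1}$ to cancel, leaving only a zeroth-order remainder.

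For the integral identity I would simply expand
\begin{equation*}
L_tL_t^*=\del\del^*+t(\del R^*+R\del^*)+t^2RR^*=\del\del^*+tA+t^2RR^*,
\end{equation*}
pair against $\eta$ in $L^2$, and integrate by parts. The first and third terms give $\|\del^*\eta\|^2$ and, via the pointwise adjoint relation $(Rv,w)=(v,R^*w)$, $\|R^*\eta\|^2$. The middle term $t\langle A\eta,\eta\rangle$ equals $t\langle\eta,A\eta\rangle$ because $A$ is self-adjoint with respect to the real inner product: taking real adjoints of each summand and using $(R^*)^*=R$ gives $A^*=R\del^*+\del R^*=A$. The real content is entirely the cancellation in the previous paragraph; the rest is bookkeeping, and the main obstacle to watch for is the normalization matching between $\del^*$ and $R^*$, which is already fixed by the proof of Lemma~\ref{Rlemma}.
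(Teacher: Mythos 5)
Your proof is correct and takes essentially the same route as the paper: the integral identity comes from expanding the square (you route it through $L_tL_t^*$ and the self-adjointness $A^*=A$, while the paper expands $|\del^*\eta+tR^*\eta|^2$ directly), and the endomorphism claim rests in both cases on the cancellation of the first-order terms, which the paper expresses as the vanishing of the principal symbol of $A$ deduced from the symbol of identity (\ref{2.threeproperties}c), and which you verify by the explicit frame computation of Lemma~\ref{Rlemma} using $\del g=0$. One cosmetic point: your closed formula $A\omega=\bar h\,\overline{g_z}\,d\bar z\otimes\nu$ at $p$ can pick up additional zeroth-order corrections coming from first derivatives of the hermitian metric on $N$ (only the base metric and $|\nu(p)|$ were normalized there), but since these corrections are zeroth order they do not affect the cancellation of the $h_z$-terms, hence the conclusion that $A$ is a bundle endomorphism stands.
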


\begin{proof}
Formula (\ref{L*LIntegral}) follows immediately  from (\ref{defR*}).
Clearly $A$ is a first order linear differential operator, so is a bundle endomorphism if its symbol is 0.  For a non-zero tangent vector $v$, the symbols $\sigma_v$ of $\del$ and $-\sigma^*_v$ of $\del^*$ are isomorphisms, in fact, $\sigma_v\sigma^*_v=|v|^2\,\mbox{Id.}$ Taking the symbol of   equation    (\ref{2.threeproperties}c)  gives  $R^*\sigma_v =\sigma_v^* R$.    But then $-|v|^2$ times  the symbol is $A$ is
$$
-|v|^2\, (\sigma_v R^*-R\sigma_v^*)\ =\  \sigma_v R^* \sigma_v\sigma_v^* - \sigma_v\sigma_v^* R \sigma_v^*
\ =\ \sigma_v\left[R^*\sigma_v-\sigma_v^* R\right] \sigma_v^*\ =\ 0.
$$
\end{proof}

\begin{lemma}
\label{concentrationthm}
For each  neighborhood ${\mathcal B}$ of the set of zeros of $R^*$ there is a constant $c>0$ such that for all $t\geq 1$ each  solution of $L^*_sL_t\eta=\la \eta$ with $\la\leq 1$ satisfies
$$
\int_{C\setminus {\mathcal B}} |\eta|^2\ \leq\ \frac{c}{t}\, \int_C |\eta|^2.
$$
\end{lemma}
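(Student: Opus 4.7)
The plan is a direct Bochner/Weitzenb\"ock-type argument based on Lemma~\ref{Aendolemma}. The key point is that $R^{*}R$ is bounded below away from the zero set of $R^{*}$, so control of $\int |R^{*}\eta|^{2}$ immediately gives control of $\int_{C\setminus\mathcal{B}}|\eta|^{2}$.

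First, I would rewrite the eigenvalue condition in integral form. For an eigensection $\eta$ of $L_{t}L_{t}^{*}$ (I read the statement as $L_{t}L_{t}^{*}\eta=\lambda\eta$) we have
$$
\|L_{t}^{*}\eta\|_{L^{2}}^{2}\ =\ \langle L_{t}L_{t}^{*}\eta,\eta\rangle\ =\ \lambda\|\eta\|_{L^{2}}^{2}.
$$
Plugging this into the identity (\ref{L*LIntegral}) of Lemma~\ref{Aendolemma} yields
$$
t^{2}\int_{C_{s}}|R^{*}\eta|^{2}\ =\ \lambda\|\eta\|^{2}-\int_{C_{s}}|\del^{*}\eta|^{2}-t\langle\eta,A\eta\rangle.
$$
Dropping the non-negative $|\del^{*}\eta|^{2}$ term and using that $A$ is a bundle endomorphism (hence $|\langle\eta,A\eta\rangle|\leq\|A\|_{\infty}\|\eta\|^{2}$) gives, for $\lambda\leq 1$ and $t\geq 1$,
$$
t^{2}\int_{C_{s}}|R^{*}\eta|^{2}\ \leq\ (1+t\,\|A\|_{\infty})\,\|\eta\|^{2}\ \leq\ C_{1}\,t\,\|\eta\|^{2},
$$
so
$$
\int_{C_{s}}|R^{*}\eta|^{2}\ \leq\ \frac{C_{1}}{t}\,\|\eta\|^{2}.
$$

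Second, I would pass from $R^{*}\eta$ to $\eta$ on $C_{s}\setminus\mathcal{B}$. From property (\ref{2.threeproperties}b), $R^{*}R=|\varphi|^{2}\,\mathrm{Id}$, so $|R^{*}|$ is pointwise equal to $|R|=|\varphi|$ (up to the metric identifications). The zero set of $R^{*}$ therefore coincides with the zero set of the fixed holomorphic section $\varphi$ that produced $R$ via Lemma~\ref{Rlemma} (equivalently Corollary~\ref{cor5.2}), and is a finite set. Since $\mathcal{B}$ is a fixed open neighborhood of this finite zero set and $C_{s}$ is compact, there is a uniform lower bound $|R^{*}|^{2}\geq c_{2}>0$ on the closed set $C_{s}\setminus\mathcal{B}$; the bound can be chosen uniform in $s$ by taking $|s|$ small, since the family $R$ was constructed as a fiberwise restriction of a single bundle map $R_{\zeta}$ on the smooth surface $\C_{\zeta}$ (see Theorem~\ref{theorem6.3}).

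Combining the two steps,
$$
c_{2}\int_{C_{s}\setminus\mathcal{B}}|\eta|^{2}\ \leq\ \int_{C_{s}\setminus\mathcal{B}}|R^{*}\eta|^{2}\ \leq\ \int_{C_{s}}|R^{*}\eta|^{2}\ \leq\ \frac{C_{1}}{t}\,\|\eta\|^{2},
$$
which is the desired inequality with $c=C_{1}/c_{2}$. The main things to verify carefully are the uniformity of $C_{1}$ (which amounts to uniform boundedness of the zeroth-order operator $A$ as $s$ varies, a consequence of the fact that $A$ is obtained from the globally defined $R_{\zeta}$) and the uniform positivity of $|R^{*}|$ on $C_{s}\setminus\mathcal{B}$ for small $|s|$, which are both straightforward from the family construction of Section~\ref{section6}. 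I do not see any serious obstacle; the content of the lemma is really just the Bochner identity plus the sign of one term.
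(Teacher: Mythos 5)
Your argument is correct and is essentially the paper's own proof: both rest on the integral identity of Lemma~\ref{Aendolemma}, discard the non-negative $|\del^*\eta|^2$ term, bound the endomorphism term by $t\,\|A\|_\infty\|\eta\|^2$, and then use that $|R^*|$ is bounded below on $C\setminus\mathcal{B}$ to convert the bound on $\int|R^*\eta|^2$ into the stated bound on $\int_{C\setminus\mathcal{B}}|\eta|^2$. Your reading of the eigenvalue equation as $L_tL_t^*\eta=\lambda\eta$ (so that $\|L_t^*\eta\|^2=\lambda\|\eta\|^2$) matches what the paper uses; the extra remarks on uniformity in $s$ are harmless but not needed for the lemma as stated, which concerns a fixed fiber.
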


\begin{proof}
Noting that $R^*$ is an isomorphism  on $C\setminus{\mathcal B}$ and applying (\ref{L*LIntegral}) gives the inequalities
$$
\int_{C\setminus {\mathcal B}} |\eta|^2\ \leq\
\frac{c}{t^2}\,\int_{C\setminus {\mathcal B}} t^2 |R^*\eta|^2\ \leq\
\frac{c}{t^2}\int_{C} |L_t^*\eta|^2+t  \left|\langle\eta, A\eta\rangle\right| \ \leq\ \left(\frac{c\la}{t^2}+\frac{c}{t} \|A\|_\infty \right)\int_C|\eta|^2.
$$
\end{proof}

Lemma~\ref{concentrationthm} means that as $t\to \infty$ the low eigensections of $L_t^*L_t$  concentrate in small neighborhoods $D(\ep)$ of the zeros of $R^*$.   The zeros occur only at the nodes with even multiplicity, where $R$ is given by (\ref{5.Requation}).  In particular, the elements of $\ker L^*_t$ on $C_0$ concentrate at the even nodes $p^i_j$;  these are explicitly described in the next lemma.

Writing $\eta=\phi\, d\bar{x}\otimes\nu$ in the coordinates of (\ref{5.Requation}),  the equation $L_t^*\eta=0$ takes the form
\begin{equation}
\label{localR*}
-\frac{\partial\phi}{dx}+ at\,\bar{x}\bar{\phi}\ =\ 0
\end{equation}
with $a>0$. Regarded as an equation on $\cx$, this  has the explicit   $L^2$-normalized solution
\begin{equation}
\label{defphi}
\eta=\phi\, d\bar{x}\otimes  \nu  \qquad  \mbox{where }\ \    \phi(x) \ =\ i\,\sqrt{\tfrac{at}{\pi}}\  e^{-atx\bar{x}}. 
 \end{equation}
 By cutting off and gluing,  these forms give approximate elements of $\ker L^*_t$ on curves.   For example, we can glue onto $C_1$ as follows.   Fix disjoint disks $D_j=D(p^1_j, 2\ep)$ in $C_1$  with coordinate $x$ centered on the points $p^1_j$ of even multiplicity.   Choose a  cutoff function $\beta_j=\beta_\ep$  on $D_j$  as defined before (\ref{6.Psibeta}) and set
\begin{equation}
\label{DefFApproxeta}
\F^{approx}_{t}\ =\ \mbox{span}_\R\, \big\{\eta_j=\beta_j \cdot \phi(x)\, d\bar{x}\otimes \nu \, \big|\, j=1, \dots, \ell^{ev}\big\}.
\end{equation}

\begin{lemma}
\label{concentrationC1}
For large $t$, the $L^2$ orthogonal projection $\pi_a: \F^{approx}_{0,t}\to \ker L^*_t$  on  $C_1$ is  an  isomorphism and   an isometry up to terms of order $O(1/t)$.
\end{lemma}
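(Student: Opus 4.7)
The plan is to deduce the lemma from Lemma~\ref{LAlemma} applied with $L=L_t^*$. This requires three ingredients: near-orthonormality of the basis $\{\eta_j\}$ of $\F^{approx}_{0,t}$; a quantitative smallness bound on $\|L_t^*\eta_j\|$; and a spectral gap for $L_tL_t^*$ that isolates the kernel $\ker L_t^*$ from the rest of the spectrum.  By Lemma~\ref{WjLemma}, $\dim_\R\ker L_t^*=\ell^{ev}=\dim_\R\F^{approx}_{0,t}$, so once $\pi_a$ is shown to be close to the identity it is automatically an isomorphism.

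For the first ingredient, the disks $D_j$ are disjoint so $\langle\eta_i,\eta_j\rangle=0$ for $i\neq j$; the normalization in (\ref{defphi}) gives $\int_\cx|\phi|^2=1$ while $|\phi|^2=(at/\pi)e^{-2at|x|^2}$ has Gaussian tail of mass $O(e^{-2at\epsilon^2})$ outside $B(\epsilon)$, so $\|\eta_j\|^2=1-O(e^{-2at\epsilon^2})$.  For the second ingredient, the key is the exact local form (\ref{5.Requation}): in the chosen coordinate and holomorphic trivialization $\nu$ around $p^1_j$, the map $R$ is exactly $a\bar x\,d\bar x\otimes\nu$ and the metric is Euclidean, so $\phi\,d\bar x\otimes\nu$ is a genuine pointwise solution of $L_t^*=0$ on all of $B(2\epsilon)$ by (\ref{localR*}).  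Therefore $L_t^*\eta_j=-(\partial_x\beta_j)\,\phi\,d\bar x\otimes\nu$ is supported on the annulus $\{\epsilon<|x|<2\epsilon\}$, where $|d\beta_j|\leq 2/\epsilon$ and $|\phi|^2\leq (at/\pi)e^{-2at\epsilon^2}$, yielding $\|L_t^*\eta_j\|^2=O(t\,e^{-2at\epsilon^2})$.

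The main obstacle is establishing the spectral gap for $L_tL_t^*$ on $C_1$: that its smallest nonzero eigenvalue $\mu_1(t)$ stays bounded below (and in fact grows) as $t\to\infty$.  Since $R^*$ vanishes only at the $\ell^{ev}$ points $p^1_j$, Lemma~\ref{concentrationthm} forces any eigensection with eigenvalue $\leq 1$ to concentrate in arbitrarily small neighborhoods of these points, and the identity (\ref{L*LIntegral}) applied on the complement of such neighborhoods gives $\int|L_t^*\eta|^2\geq t^2\int|R^*\eta|^2-O(t)\int|\eta|^2\geq ct^2\|\eta\|^2$ there, since $R^*$ is an isomorphism off its zero set.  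Near each zero, $L_tL_t^*$ is a small perturbation of the model Landau-type operator on $\cx$ associated to $R(\nu)=a\bar x\,d\bar x\otimes\nu$, whose kernel is one-dimensional (spanned by $\phi\,d\bar x\otimes\nu$) and whose next eigenvalue is of order $t$.  A parametrix comparison with this local model, glued across the finitely many zeros, shows that the full low eigenspace of $L_tL_t^*$ coincides with $\ker L_t^*$ for large $t$, with spectral gap $\mu_1(t)\gtrsim t$.

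Given the gap, Lemma~\ref{LAlemma} applied with $V=\F^{approx}_{0,t}$ and $\mu=O(t\,e^{-2at\epsilon^2})$ shows that the orthogonal projection onto the low eigenspace (which equals $\ker L_t^*$) differs from the inclusion by an operator of order $O(\sqrt{\mu/\mu_1(t)})$, exponentially small in $t$ and in particular $O(1/t)$.  Combining with the near-orthonormality of $\{\eta_j\}$ and the dimension equality above, $\pi_a$ is an isomorphism and an $L^2$ isometry up to $O(1/t)$.
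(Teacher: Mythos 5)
Your core argument is essentially the paper's: bound $\|L_t^*\eta_j\|$ using the cutoff (the $\eta_j$ solve the local equation (\ref{localR*}) exactly in the chosen coordinates, so only the annulus where $d\beta_j\neq 0$ contributes), apply Lemma~\ref{LAlemma} with $L=L_t^*$, and get the isomorphism from the dimension count $\dim_\R\ker L_t^*=\ell^{ev}$ of Lemma~\ref{WjLemma} together with the linear independence of the $\eta_j$ (disjoint supports). The one place you go beyond the paper is the spectral gap for $L_tL_t^*$, which the paper uses implicitly when it invokes Lemma~\ref{LAlemma}. You are right that a gap statement is needed, but the route you sketch --- comparison with the local Landau-type model at each zero of $R^*$ and a ``parametrix comparison \dots glued across the finitely many zeros'' giving $\mu_1(t)\gtrsim t$ --- is both stronger than necessary and not actually carried out: making it rigorous would require controlling the $O(|x|)$ corrections to $R$, the metric and $|\nu|$ near each $p^1_j$, which is genuine work. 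Since your $\mu$ is already exponentially small (or $O(1/t^2)$ as in the paper's estimate), all that is needed is a lower bound on the nonzero spectrum of $L_tL_t^*$ on $C_1$ that is uniform in $t$, and this is immediate: the nonzero eigenvalues of $L_tL_t^*$ coincide with those of $L_t^*L_t$; by (\ref{2.vanishintegral}) (valid because of (\ref{2.threeproperties}c)) one has $\|L_t\xi\|^2=\|\del\xi\|^2+t^2\|R\xi\|^2$, which is nondecreasing in $|t|$, so by min--max the smallest eigenvalue of $L_t^*L_t$ for $|t|\geq t_0>0$ is bounded below by the smallest eigenvalue at $t_0$, which is positive by Theorem~\ref{2.vanishingtheorem} since $R_0|_{C_1}$ vanishes only on ${\cal R}_{f_1}^{ev}$ (Theorem~\ref{theorem6.3}b). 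With that uniform gap the low eigenspace of $L_tL_t^*$ is exactly $\ker L_t^*$ for large $t$, and Lemma~\ref{LAlemma} gives the $O(1/t)$ isometry just as you and the paper conclude; so your proposal is correct once the gap paragraph is replaced (or completed) by this short argument.
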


\begin{proof}
  Integration in polar coordinates shows that $\frac12\leq\|\eta_j\|\leq 2$ for all $j$ and all large $t$.
Also,  $L^*_t\eta_j=(\del +tR^*)(\beta_j\eta) = \beta_j L^*_t\eta -*(\del\beta\wedge *\eta)$ with $L_t^*\eta=0$.  Integrating using (\ref{defphi}) yields
\begin{equation}
\label{8.3est1}
\|L^*_t\eta_j\|^2\ \leq\ \int_{D_j} |d\beta|^2\, |\eta|^2\ \leq\ \frac{c_1}{\ep^2}\int_\ep^{2\ep} \phi^2(r)\ rdr
\  \leq\ \frac{c_2}{t^2}\, \|\eta_j\|^2
\end{equation}
after  noting that  $t^2e^{-2at\ep^2} \leq \ep^2$ for  large $t$.  Lemma~\ref{LAlemma} then shows that $\pi_a$ is an isometry up to terms of  order $1/t$.   It is an isomorphism  because
 the $\{\eta_j\}$  are linearly independent (they  have disjoint support) and $\ker L^*_t $  and $\F^{approx}_{0,t}$ have the same dimension $\ell^{ev}$  by Lemma~\ref{WjLemma}.
\end{proof}

Lemma~\ref{concentrationC1} is easily modified to apply to the  smooth fibers $C_s$ of $\C\to\Delta$.   For each node $p^i_j$ of $C_0$ with even multiplicity, let $\beta^i_j$ to be the function $\beta_\ep$ as in (\ref{6.Psibeta}) in $(x,y)$ coordinates on the ball $B(p^i_j,2\ep)$ in $\C$ and replace (\ref{DefFApproxeta}) by the $2\ell^{ev}$-dimensional real vector space
$$
\F^{approx}_{t}\ =\ \mbox{span}_\R\, \big\{\eta^i_j=\beta^i_j\cdot  \phi(x)\, d\bar{x}\otimes \nu \, \big|\, j=1, \dots, \ell^{ev}, i=1,2\big\}.
$$
The restriction to $C_s$ followed by the $L^2$ orthogonal projection gives a linear map $\pi_a: \F^{approx}_{t}\to \F^{low}_{t}$ onto the low eigenspace of $L_tL^*_t$.

\begin{theorem}
\label{concentrationC2}
Whenever $0<|s|\leq 1/t^2$ and $t$ is   large,  $\pi_a: \F^{approx}_{t}\to \F^{low}_{t}$    is an isomorphism and an  isometry up to terms of order $O(1/t)$.
\end{theorem}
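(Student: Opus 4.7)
The plan is to apply the abstract Lemma~\ref{LAlemma} to $L_t^*$ with test subspace $V=\F^{approx}_{t}$, combined with a dimension count; the argument parallels Lemma~\ref{concentrationC1} (the $C_0$ case) but must absorb the smoothing for $s\ne 0$.

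First, the family $\{\eta^i_j|_{C_s}\}$ is $L^2$-orthonormal up to $O(1/t)$: the bumps are supported in disjoint balls $B(p^i_j,2\ep)\subset \C$, so orthogonality on $C_s$ is automatic, and near each even node $C_s\cap B(p^i_j,2\ep)$ is the annulus $|\zeta s|/(2\ep)\leq |x|\leq 2\ep$, whose inner hole $O(|s|)$ is much smaller than the Gaussian concentration scale $1/\sqrt{t}$ when $|s|\leq 1/t^2$. Polar integration of $|\phi(x)|^2=(at/\pi)\,e^{-2at|x|^2}$ then gives $\|\eta^i_j|_{C_s}\|^2=1+O(1/t)$, just as in the $C_0$ calculation in Lemma~\ref{concentrationC1}.

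The key estimate is $\|L_t^*\eta^i_j\|^2_{L^2(C_s)}=o(1/t^2)$. Decompose the integral over the shell $\ep\leq|x|\leq 2\ep$ (where $d\beta\ne 0$) and the inner disk $|x|\leq\ep$ (where $\beta^i_j=1$). The shell contribution is super-exponentially small in $t$ since $|\phi(x)|^2=O(te^{-2at\ep^2})$ there, exactly as in (\ref{8.3est1}). On the inner disk, the smooth extension $\hat\eta=\phi(x)\,d\bar x\otimes \nu$ satisfies $L_{0,t}^*\hat\eta=0$ along $C_i$ by the construction of $\phi$ in (\ref{defphi}) together with the local form (\ref{5.Requation}) of $R$ at the even node; hence $L_{s,t}^*\hat\eta|_{C_s}$ is controlled by the normal distance from $C_s=\{xy=\zeta s\}$ to $C_i=\{y=0\}$, which is $|y|=|\zeta s|/|x|$. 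Since $\phi$ concentrates its $L^2$ mass on $|x|\geq c/\sqrt{t}$ and $|s|\leq 1/t^2$, one has $|y|=O(1/t^{3/2})$ on the effective support; integrating against $|\phi|^2$ yields $\|L_{s,t}^*\eta^i_j\|^2=O(1/t^3)$.

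With these two estimates, Lemma~\ref{LAlemma} (applied to $L_t^*$ with $\mu$ of order $1/t^2$) shows that $\pi_a$ is an isometry up to $O(1/t)$ onto the low eigenspace of $L_tL_t^*$, hence injective. For the dimension count, $\dim_\R \F^{approx}_{t}=2\ell^{ev}$ by construction; in the regime $0<|s|\leq 1/t^2$, $t$ large, the eigenvalue bounds (\ref{7.Evaluebounds}) together with Theorem~\ref{Elowtrivial}(d) show that $\F^{low}_{t}=L_t(\E_W)$, a real vector bundle of rank $2\dim_\cx W=2\ell^{ev}$ by Lemma~\ref{WjLemma} (using that $L_t$ is injective on $\E_W$ for $s\ne 0$ by the Vanishing Theorem~\ref{2.vanishingtheorem}). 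Equal dimensions plus injectivity give the isomorphism. The main obstacle throughout is the $L^2$ control of $L_{s,t}^*\hat\eta$ on the inner disk: both the induced metric on $C_s$ (via $y=\zeta s/x$) and the restriction $R_s^*$ of the bundle map $R^*$ differ from their $C_i$-limits by $O(|y|)$, and these corrections get multiplied by the large factor $t$ in $L_t^*=\del^*+tR^*$. The hypothesis $|s|t^2\leq 1$ is precisely the threshold at which $t\cdot O(|y|)$, integrated against the Gaussian $|\phi|^2$ concentrated on $|x|\sim 1/\sqrt{t}$, remains much smaller than the spectral gap separating $\F^{low}_{t}$ from the rest of the spectrum of $L_tL_t^*$.
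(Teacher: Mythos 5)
Your overall architecture is the same as the paper's: show the approximate forms are almost orthonormal, bound $\|L_t^*\eta^i_j\|^2$ by $c/t^2$, apply Lemma~\ref{LAlemma} to get the near-isometry, and get surjectivity from the dimension count $\F^{low}_{t}\cong\E_W\cong W$ of real dimension $2\ell^{ev}$ via (\ref{kervsE2}) and Lemma~\ref{WjLemma}. But there is a genuine gap in the key norm estimate. You assert that $\beta^i_j\equiv 1$ on the ``inner disk'' $|x|\le\ep$, so that the only cutoff contribution comes from the outer shell $\ep\le|x|\le 2\ep$. That is false: $\beta^i_j$ is the bump function $\beta_\ep$ of (\ref{6.Psibeta}) on the ambient ball $B(p^i_j,2\ep)\subset\C$, i.e.\ a function of $\rho$ with $\rho^2=|x|^2+|y|^2$, and on $C_s=\{xy=\zeta s\}$ one has $|y|=|\zeta s|/|x|$, so $\rho\ge\ep$ as soon as $|x|\le |s|/\ep$. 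Hence $d\beta^i_j\ne 0$ also on the inner annulus $A'=\{|s|\le 2\ep |x|\le 2|s|\}$, and this term is not harmless a priori: there $|\eta^i_j|^2\sim t$ is at its maximum and the euclidean gradient of $\beta_\ep$ has size of order $\ep/|s|$. In the paper this is precisely the contribution tamed by the conformal factor $\gamma^2=1+|s|^2/r^4$ of the induced metric (\ref{6.metric}) together with the hypothesis $|s|\le 1/t^2$, through the estimate (\ref{e4xineqality}), which gives $\int_{A'}|d\beta_\ep|^2|\eta^i_j|^2\,dvol_s\le c_3\,t|s|^2/\ep^4\le c_4/t^2$. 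Your proof never accounts for this region, so your bound $\|L_t^*\eta^i_j\|^2=o(1/t^2)$ is not established; note that the hypothesis $0<|s|\le 1/t^2$ does its real work in the paper exactly here, not only through the deviation of $C_s$ from $C_i$ in the middle of the neck as your last paragraph suggests.

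A secondary point: on the part of the neck where $\beta\equiv 1$ you argue that $L_{s,t}^*\hat\eta$ is controlled by the normal distance $|y|=|\zeta s|/|x|$ times $t$. The paper avoids this deviation analysis altogether: by (\ref{5.Requation}) the local form $R(\nu)=a\bar x^2\,\bar\tau\otimes\nu$ with $\tau=dx/x=-dy/y$ restricts on $C_s$, in the $x$-chart, to the same expression $a\bar x\,d\bar x\otimes\nu$ as on $C_1$, and $L_t^*$ is conformally covariant for the metric (\ref{6.metric}); so the model $\phi(x)\,d\bar x\otimes\nu$ is annihilated by $L_t^*$ on $C_s$ where $\beta$ is constant, and only the $d\beta$ terms survive, i.e.\ the bound is the first integral of (\ref{8.3est1}) over $A\cup A'$. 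Your $O(|y|)$ argument is plausible but only sketched; if you repair the missing $A'$ term as above, your proof collapses onto the paper's.
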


\begin{proof}
For each $i,j$, the support of $\eta_j^i$ lies in the portion of $C_s$ given by $(x, \zeta s/x)$ for $|s|/2\ep\leq |x|\leq 2\ep$ with metric (\ref{6.metric}).    Integration in polar coordinates  shows that   $\frac12\leq\|\eta^i_j\|\leq 2$ for all large $t$.    Noting that the support of $d\beta_\ep$ lies in $A\cup A'$ where $A=\{ \ep\leq r\leq 2\ep\}$ and $A'=\left\{|s|\leq 2\ep r\leq 2|s|\right\}$.   Then the $L^2$ norm of $L^*_t\eta$ is bounded  by the first  integral in (\ref{8.3est1}) with the domain  $D_j$  replaced by $A\cup A'$.   On $A$, the  metric (\ref{6.metric}) approaches the eucidean metric as $s\to 0$, so the bound  (\ref{8.3est1}) holds.  On $A'$,  we can replace the conformally invariant quantity  $|d\bar{x}|^2\ dvol_s$  by its value in  the eucidean metric, namely $2 r dr d\theta$ and   replace $|d\beta_\ep|^2$ by  its euclidean value times $\gamma^{-1}$.   Noting that $|d\beta_\ep|^2 \gamma^{-1} \leq4 |\ep s|^{-2}\left(1+|s|^2 r^{-4}\right)^{-1}\leq c_1\ep^{-2}$ on $A'$ we have,  as in (\ref{8.3est1}), 
\begin{equation}
\label{e4xineqality}
 \int_{A'}  |d\beta_\ep|^2 \, |\eta^i_j|^2\ dvol_s
 \ \leq\ \frac{c_2}{\ep^2}\int_{\frac{|s|}{2\ep}}^{\frac{|s|}{\ep}} e^{-2atr^2}\ rdr
\ \leq\  c_3 \frac{t |s|^2}{\ep^4}
\ \leq\ \frac{c_4}{t^2}
\end{equation}
where we have used the inequalities $|s|\leq 1/t^2$ and  $e^{-x}-e^{-4x}\leq 4x$ for  small $x$ and assumed that $t\geq \ep^{-4}$.   Combining these bounds yields
 \begin{equation}
\label{concentrationC2estimate}
\|L^*_t\eta^i_j\|^2\ \leq \   \frac{c_5}{t^2} \, \|\eta^i_j\|^2.
\end{equation}
Lemma~\ref{LAlemma} then shows that $\pi_a$ is an isometry up to $O(1/t)$ terms.  It is an isomorphism because (\ref{kervsE2}) implies that  for $s\not= 0$ $\F^{low}_{t}\cong \E_W\cong W$ has real dimension $2\ell^{ev}$.
\end{proof}

\vspace{.6cm}


\setcounter{equation}{0}
\section{Cancellation for even partitions}
\label{section10}
\bigskip

For each partition $m$ and each $\zeta\in Q_m$,     Theorem~\ref{evenThm} expresses the parity $p(f_{\zeta,s})$ in terms of the linear operators $L_{t,j}$ between the low eigenspaces $\E_j^{low}$ and $ \F_{t,j}$ described in  Theorem~\ref{Elowtrivial}   and, for large $t$,   Theorem~\ref{concentrationC2}.
In this section we will use the concentration principle of Section~\ref{section9}  to show the following remarkable cancellation property.

 \begin{theorem}
 \label{cancellationThm}
Let $m$ be an even partition as above and  $s\ne 0$. Then 
$$
\sum_{\zeta\in Q_m}  p(f_{\zeta,s})\ =\ 0.
$$
\end{theorem}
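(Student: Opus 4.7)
The plan is to apply Theorem~\ref{evenThm} to reduce $\sum_\zeta p(f_{\zeta,s})$ to local sums over the $m_j$-th roots $(\zeta_j^1,\zeta_j^2)$, compute each block determinant $\det L_t|_{\E_j}$ asymptotically using Theorem~\ref{concentrationC2}, and then exploit the fact that for even $m_j$ the $m_j$-th roots of unity are stable under $\zeta\mapsto -\zeta$ so that the signs cancel pairwise. By Theorem~\ref{evenThm} the prefactor $(-1)^{h^0(N_1)+h^0(N_2)}$ is $\zeta$-independent; since $\zeta$ enters the family construction of Section~\ref{section5} only through the local smoothings $xy=\zeta_j^i s$ at the endpoint nodes $p_j^i$, and since the $\E_j$ are approximately $L^2$-orthogonal by Theorem~\ref{Elowtrivial}(c), the block $L_t|_{\E_j}$ depends --- up to errors negligible in the asymptotic regime $0<|s|\ll 1/t^2$ --- only on $(\zeta_j^1,\zeta_j^2)$. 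The total sum therefore factors,
\[
\sum_{\zeta\in Q_m}\!p(f_{\zeta,s}) \;=\;\pm\!\!\prod_{j:\,m_j\,\mathrm{even}}\!\Big(\sum_{\zeta_j^1,\zeta_j^2} \sgn\det L_t\big|_{\E_j}\Big),
\]
so it will suffice to show the inner sum vanishes for at least one even $j$.

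Fix such a $j$ and a nonzero $\psi_j\in W_j$. By Theorem~\ref{TwistingProp}(d) and the continuity condition in Lemma~\ref{WjLemma}, the restrictions $\psi_j|_{E^i_{j;n_j-1}}$ are nonzero sections of $\O(1)$ vanishing at $p_j^i$, so have local expansions $a_i y+O(y^2)$ with $a_i\in\cx^*$. The smoothing relation $y=\zeta_j^i s/x$ then gives $\psi_{j,s}(x)\approx a_i\zeta_j^i s/x$ on the $C_i$ side of the neck. I would use $\{\psi_{j,s},J\psi_{j,s}\}$ as a real basis of $\E_j$ and the Gaussian bumps $\{\eta_j^1,\eta_j^2\}$ of~(\ref{defphi}) as a basis of $\F_j=L_t(\E_j)$, and compute the $2\times 2$ matrix of $L_t|_{\E_j}$ by inner products. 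Since $\psi_{j,s}$ is holomorphic away from the cutoff annulus, and $\eta_j^i$ is Gaussian-small on that annulus, the contribution $\langle\del\psi_{j,s},\eta_j^i\rangle$ is exponentially small in $t$. The piece coming from $tR\psi_{j,s}$ uses $R(\nu)=a\bar x\,d\bar x\otimes\nu$ together with the anti-linearity of $R$ to give entries proportional to $\bar a_i\bar\zeta_j^i\bar s\sqrt{t}$, and a direct $2\times 2$ computation yields the leading-order formula
\[
\det L_t\big|_{\E_j}\;\approx\;C|s|^2 t\,\Im\bigl(a_1\bar a_2\,\zeta_j^1\bar\zeta_j^2\bigr),\qquad C>0,
\]
with strictly smaller-order error in $|s|^2t$.

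The conclusion is then immediate from the pairing $\zeta_j^1\mapsto -\zeta_j^1$: because $m_j$ is even, $-1$ is an $m_j$-th root of unity, so this is an involution on the $m_j$-th roots under which $\Im(a_1\bar a_2\,\zeta_j^1\bar\zeta_j^2)$ changes sign. The signs cancel pairwise, the inner sum vanishes, and so $\sum_\zeta p(f_{\zeta,s})=0$.

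The main obstacle is the asymptotic determinant computation, and in particular the degenerate possibility that $\Im(a_1\bar a_2\,\zeta_j^1\bar\zeta_j^2)$ vanishes for some of the finitely many pairs $(\zeta_j^1,\zeta_j^2)$, in which case the leading expression gives no sign information for that $\zeta$ and subleading terms --- which may not transform cleanly under $\zeta_j^1\mapsto -\zeta_j^1$ --- would be needed. To rule this out I would invoke the perturbation remark at the end of Section~\ref{section8}, which allows the leading coefficient of $R$ at each even node to be replaced by an arbitrary small perturbation without changing the parity formula~(\ref{parityformula-even2}); a generic choice places all $m_j^2$ numbers $a_1\bar a_2\,\zeta_j^1\bar\zeta_j^2$ off the real axis, so the leading term determines the sign uniformly and the pairing argument applies.
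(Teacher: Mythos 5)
Your proposal is correct and follows essentially the same route as the paper's Section~\ref{section10} proof: reduce via Theorem~\ref{evenThm} to the $2\times 2$ blocks at the even nodes, compute $\det L_t|_{\E_j}$ asymptotically from the local expansion $\psi\approx b\,\zeta s/x$ and the concentrated Gaussian forms, use the Section~\ref{section8} perturbation remark to rule out degenerate vanishing of the leading term, and cancel via the involution $\zeta_j\mapsto-\zeta_j$, which exists precisely because $m_j$ is even. Your factoring of the sum into per-node inner sums and the claim that the $\del$-contribution is exponentially small (the paper only gets a power-law bound, which suffices) are minor repackagings of the same argument.
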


To prove Theorem~\ref{cancellationThm},  fix an even partition $m=(m_1,\cdots,m_\ell)$ and  $\zeta=(\zeta_1,\zeta_1',\cdots,\zeta_\ell,\zeta_\ell')$ in $Q_m$  and choose an even  component $m_j$ of $m$.
 We will focus on the  chain $\bar{E}_j$ corresponding to the chosen $m_j$ and the nodal points 
 $p=p^1_j\in C_1\cap \bar{E}_j$  and  $q=p^2_j\in C_2\cap \bar{E}_j$ at the two ends of $\bar{E}_j$.
For any  bases
$\{\psi_1, i\psi_1\} \ \mbox{of }  \E_j^{low} $ and $\{\eta_1, \eta_2\} \ \mbox{of }  \F_j^{low}$
the $j^{th}$ factor in (\ref{parityformula-even2})  is   the sign of the determinant of the matrix
\begin{equation}
\label{Lmatrix}
 L_{t,j}\ =\ L_t  \big|_{\E_{j}^{low}} \ =\ \begin{pmatrix} ( \eta_1, L_t\psi_1) &  ( \eta_2, L_t\psi_1) \\[1mm]
 ( \eta_1, L_t\psi_2) &  ( \eta_2, L_t\psi_2)
\end{pmatrix}
\end{equation}
whose entries are given by  conformally invariant  $L^2$ inner products
$$
(\eta, \xi )\ =\ \int_{C_{\zeta,s}}  \mbox{Re}(\eta\wedge *\overline{\xi} )   \hspace{1cm} \eta, \xi\in\Omega^{0,1}(C_s, N_s)
$$
on smooth fibers $C_{\zeta,s}$ of $\C_\zeta$.   Theorems~\ref{Elowtrivial}   and \ref{concentrationC2}  give explicit formulas for sections $\psi_j$ and $\eta_k$ which give bases up to terms of order $O(\sqrt{|s|})$; using these in (\ref{Lmatrix}) will correctly give $\sgn \det  L_{t,j}$ for small $s$.

The results of Section~\ref{section9} show that for large $t$  the inner products in the first column of (\ref{Lmatrix})  are concentrated near $p$, and those in the second column are concentrated near $q$.   Thus $\det L_{t,j}$ can be regarded as the contribution of an ``instanton''  tunneling across the chain $\bar{E}_j$ between $p$ and $q$.

To proceed, we need coordinate formulas for $\psi, \eta_1$ and $\eta_2$.   Recall that there   are local coordinates $(x,y)$ and a local holomorphic section $\nu$ of $\N$ defined  a ball $B(p^1_j,2\ep)$ so that $C_{\zeta,s}$ is locally given by $xy=\zeta s$, $|\nu(p)|=1$,  and 
$$
R(\nu)\,=\, a \bar{x}\, d\bar{x} \otimes \nu
$$
 for a positive real constant $a$  (cf.  (\ref{5.Requation})).    Noting that   elements in $W_j$ vanish to order 1 at $p$ and $q$, we can take  $\psi_1$  and $\eta_1$ to be  the restrictions of 
\begin{equation}
\label{9.psietaeq}
\psi\ =\ \beta(r)\, b y\, \nu   \hspace{1.5cm} 
\eta \ =\ \frac{i}{2\pi} \,  \beta(\rho)\,  e^{-atr^2}\, d\bar{x} \otimes \nu   \hspace{.7cm}
\end{equation}
 to  $C_{\zeta,s}$ where $b\in\cx^*$,     $r=|x|$, $\rho^2=|x|^2+|y|^2$ as described in  (\ref{6.PhiHat}) and  (\ref{defphi}) but with $\eta$ normalized so that its $L^2$ norm satisfies $\|\eta\|^2\approx (4\pi a t)^{-1}$ for large $t$.

\begin{lemma}
\label{innerproductMu}
There is a $T$   such that whenever $t>T$ and $0<|s|\leq  1/t$ we have
\begin{equation}
\label{9.2}
  (\eta, L_t\psi)_{C_{\zeta, s}}  \ =\    a\Re(ibs\zeta)  \  e^{-at |s|^2/4\ep^2} \  +\  O\left(\tfrac{1}{\sqrt{t}}\right).
\end{equation}
\end{lemma}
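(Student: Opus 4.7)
The plan is a direct local-coordinate computation near the node $p=p^1_j$, where $(x,y)$ are coordinates with $C_1=\{y=0\}$, the end chain component attached along $\{x=0\}$, and $C_{\zeta,s}$ cut out by $xy=\zeta s$. Substituting $y=\zeta s/x$ into $\psi=\beta(r)by\,\nu$ and using $R(\nu)=a\bar x\,d\bar x\otimes\nu$ from (\ref{5.Requation}), one first writes
\[
L_t\psi\big|_{C_{\zeta,s}} \;=\; b\zeta s\!\left[\frac{\beta'(r)}{2r}+at\,\beta(r)\frac{\bar x}{x}\right]d\bar x\otimes\nu,
\]
so the pairing $(\eta, L_t\psi)_{C_{\zeta,s}}$ becomes an explicit integral involving the cutoffs $\beta(r)$, $\beta(\rho)$ and the Gaussian $e^{-atr^2}$.

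Next, I would set up the integral in polar coordinates $x=re^{i\theta}$, using the conformal invariance of the $L^2$ pairing on $(0,1)$-forms valued in a line bundle to work in the flat background metric on the $x$-plane. The $tR\psi$-piece carries a factor $\bar x/x=e^{-2i\theta}$ whose angular integral vanishes, so only the $\theta$-independent contributions survive. After the $\theta$-integration one is left with a one-dimensional radial integral, from which the claimed leading behaviour must be extracted.

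The key is a change of variable from $r$ to $\rho=\sqrt{r^2+|s|^2/r^2}$ using $\rho\,d\rho=r(1-|s|^2/r^4)\,dr$. This splits the radial integration into two branches $r\lessgtr|s|^{1/2}$ meeting at the waist $r_*=|s|^{1/2}$ of the tube $xy=\zeta s$. On the inner branch, $r\approx|s|/\rho$, so $e^{-atr^2}\approx e^{-at|s|^2/\rho^2}$, whose value at the boundary $\rho=2\ep$ is exactly the claimed factor $e^{-at|s|^2/(4\ep^2)}$. An integration by parts using $\beta\beta'=\tfrac12(\beta^2)'$ isolates this boundary contribution: the other boundary at $\rho=\sqrt{2|s|}$ gives zero because $r_{in}=r_{out}$ there makes the bracketed integrand vanish, while at $\rho=2\ep$ one has $\beta=0$ but the derivative terms combine to produce the leading $a\,\Re(ibs\zeta)\,e^{-at|s|^2/(4\ep^2)}$ after collecting phases ($i$ from the normalization of $\eta$, $a$ from the leading coefficient of $R$, and $bs\zeta$ from the combination $by$ with $y=\zeta s/x$).

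For the error estimate, the outer-branch contribution is $O(e^{-at\ep^2})$, exponentially small; the bulk of the inner-branch $\rho$-integral away from $\rho=2\ep$ contributes $O(1/\sqrt{t})$ from the Gaussian width of order $(at)^{-1/2}$; and higher-order Taylor corrections to $R$ (allowed by the perturbation argument at the end of Section~\ref{section8}) and to the chosen extension of $\psi$ carry additional factors of $|s|$ and contribute at the same order or smaller. The main obstacle is that the naive leading $tR\psi$-contribution to the pairing cancels by angular orthogonality, forcing one to track subleading pieces; identifying the inner-branch boundary $\rho=2\ep$ as the source of the nonzero answer, and keeping the complex phases $\Re(ibs\zeta)$ straight through the integration by parts, is the delicate part.
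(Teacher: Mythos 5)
There is a genuine error at the very first step, and it propagates through the whole argument. The map $R$ is \emph{conjugate}-linear (property (a) of (\ref{2.threeproperties})), so on $C_{\zeta,s}$ one has
$R\psi \;=\; \overline{\beta b y}\,R(\nu) \;=\; a\,\beta(r)\,\bar b\,\overline{xy}\; d\bar x\otimes\nu \;=\; a\,\beta(r)\,\bar b\,\overline{\zeta s}\; d\bar x\otimes\nu$,
i.e.\ the coefficient is the \emph{constant} $a\bar b\,\overline{\zeta s}$, with no $\bar x/x$ factor. Your formula $L_t\psi=b\zeta s\bigl[\tfrac{\beta'(r)}{2r}+at\,\beta(r)\tfrac{\bar x}{x}\bigr]d\bar x\otimes\nu$ treats $R$ as complex linear, and the ensuing claim that the $tR\psi$ contribution ``cancels by angular orthogonality'' is exactly backwards: because $\overline{xy}=\overline{\zeta s}$ is constant on the fiber, the $tR\psi$ term is $\theta$-independent, pairs directly with the Gaussian $\eta$, and \emph{is} the leading term. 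Indeed $(\eta,tR\psi)=\tfrac{at}{2\pi}\Re(ibs\zeta)\int\beta(\rho)\beta(r)e^{-atr^2}|d\bar x|^2|\nu|^2\,dvol_s$, and since $|x|\ge |s|/2\ep$ on the support in $C_{\zeta,s}$, the radial integral $2at\int_{|s|/2\ep}^{\infty}e^{-atr^2}r\,dr=e^{-at|s|^2/4\ep^2}$ produces precisely the claimed exponential factor; the $\del\psi$ term is simply estimated by $\|\eta\|\,\|\del\psi\|=O(|s|/t)=O(1/\sqrt t)$ using the normalization $\|\eta\|^2\approx(4\pi a t)^{-1}$ and (\ref{6.sbound}). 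No change of variables to $\rho$, branch splitting, or integration by parts is needed.

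Moreover, the rescue mechanism you propose cannot produce the answer even on its own terms: the would-be boundary contribution at $\rho=2\ep$ on the inner branch sits at $r=|x|\approx|s|/2\ep\ll\ep$, where $\beta(\rho)=0$ \emph{and} $\beta'(r)=0$ (the cutoff derivative is supported in $\ep\le r\le 2\ep$), so the boundary term vanishes; the genuine $\beta'$ contributions live at $r\sim\ep$ where $e^{-atr^2}$ is exponentially small. So the proposed route fails both because of the dropped conjugation and because the boundary-term extraction does not yield the stated leading term; the correct proof is the short direct computation above, in which the anti-linearity of $R$ (turning $\bar b\bar y\bar x$ into $\bar b\,\overline{\zeta s}$) is the whole point.
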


\begin{proof}
Writing $L_t\psi=\del\psi+tR\psi$ with $R\psi = \beta \bar{b}\bar{y} R(\nu) = \beta \bar{b}  a\,\overline{xy}\, d\bar{x}\otimes \nu$ and using  the equation $xy=\zeta s$, one sees that the $L^2$ inner product is  
$$
  (\eta, L_t\psi)_{C_{\zeta,s}}\
\ =\  I\ +\ \frac{at}{2\pi} \Re(i bs\zeta) \int_{C_{\zeta, s}}  \beta(\rho)\beta(r) e^{-atr^2} \ |d\bar{x}|^2\, |\nu|^2\ dvol_s 
$$
with $|I|\leq \|\eta\|\cdot\|\del\psi\|\leq c_1 |s|/t\leq c_1/\sqrt{t}$ by (\ref{6.sbound}),  our normalization of $\eta$ and the hypothesis on $s$. As in the proof of Theorem~\ref{concentrationC2},  we can replace $|d\bar{x}|^2\ dvol_s$ by  $2 r dr d\theta$.   Writing $|\nu|^2=1+h_1$  with  $|h_1|\leq c_2r$ and integrating over $\theta$ gives
$$
 (\eta, L_t\psi)_{C_{\zeta, s}} \ \ =\  2at \,  \Re\left(i bs\zeta\right)\,  \int_{|s|/2\ep}^\infty (1+ (\beta-1) +h_2)\, e^{-atr^2}\ r dr\   +\  O\left(\tfrac{1}{\sqrt{t}}\right).
$$
where $\beta=\beta(\rho)\beta(r)$ satisfies  $|\beta-1|\leq 1$ and  $|h_2|\leq c_3 r$.  The first and the last parts of this integral can be estimated using the formulas
$$
 \int_{|s|/\ep}^\infty   e^{-at r^2}\ r dr \ =\ \tfrac{1}{2at}  e^{-at|s|^2/4\ep^2}
\hspace{1in}
  \int_0^\infty r^2  e^{-at r^2} dr \ =\ \tfrac{\sqrt{\pi}}{4}\, (at)^{-3/2}.
  $$
Noting  that $\beta-1=0$ for $|s|/\ep \leq r\leq \ep$ and estimating as in (\ref{e4xineqality}), the middle integral is dominated by
$$
\int_{|s|/2\ep}^{|s|/\ep} e^{-atr^2}\ r dr +\int_\ep^\infty e^{-atr^2}\ r dr
\ \leq\  \frac{-1}{2at}\left[ e^{-atr^2}\Big|_{|s|/2\ep}^{|s|/\ep}\ +\  e^{-at\ep^2}\right]
\ \leq\ c_4\left(|s|^2+\frac{1}{t^2}\right).
$$
The lemma follows.
\end{proof}

The remaining entries in (\ref{Lmatrix}) can be calculated from (\ref{9.2}).   Setting $\psi_1=\psi$,  $\psi_2=i\psi$ and $\eta_1=\eta$,  the substitution $b\mapsto ib$ gives
$$
( \eta_1, L_t\psi_2)_{C_{\zeta, s}}\ \ =\    -a\Re(ibs\zeta)\  e^{-at |s|^2/4\ep^2} \  +\  O\left(\tfrac{1}{\sqrt{t}}\right).
$$
The entries in the second column of (\ref{Lmatrix}) are evaluated using similar coordinates $(x_2,y_2,\nu_2)$ around $q$;  in these coordinates $R(\nu_2)=a_2\bar{x_2}d\bar{x_2}\otimes \nu_2$ for some real number $a_2>0$,  and  $\psi_1$ and $\eta_2$ have the form (\ref{9.psietaeq}) with $b$ replaced by a different constant, which we write as  $ib_2\in\cx^*$.    After a little algebra, one obtains
$$
 \det {L}_{t,j}\ =\   -a a_2
\left|
\begin{array}{ll}
\Re(i bs\zeta_j) & \Re(b_2s\zeta^\prime_j) \\
\Re( bs\zeta_j) &   \Re(ib_2 s\zeta^\prime_j)
\end{array}
\right|\ =\ a a_2\, |s|^2\, \left(\Re(\,b\bar{b_2}\,\zeta_j\overline{\zeta_j^\prime}\,)\ +\ O\left(\tfrac{1}{\sqrt{t}}\right)\right).
$$

\bigskip

\begin{proof}[Proof of Theorem~\ref{cancellationThm}]   By the remark at the end of Section~\ref{section8}   we may assume that $\Re(b \bar{b}_2\,\zeta_j\overline{\zeta_j^\prime})$ is non-zero for each $j$ with $m_j$ even.  For these $j$,  the above formula gives
 $\sgn \det L_{t,j} = \sgn  \Re(b\bar{b_2}\,\zeta_j\overline{\zeta_j^\prime})
$ when $t$ is large  and $0<|s|\leq 1/t$.    For  each $\zeta\in Q_m$, Theorem~\ref{evenThm} therefore shows that
\begin{equation}
\label{8.product}
 p(f_{\zeta,s})\ =\  (-1)^{h^0(N_1)+ h^0(N_2)}   \cdot  \prod \sgn  \Re(b\bar{b_2}\,\zeta_j\overline{\zeta_j^\prime})
\end{equation}
 where the product is over all $j$ with $m_j$ even.  

\medskip

Now  comes the punch line.   Fix an index $j$ with even $m_j$.   For each   $\zeta=(\zeta_1,\zeta_1',\cdots,\zeta_\ell,\zeta_\ell')$ in $Q_m$,
 replacing $\zeta_j$ by $-\zeta_j$ defines an involution  $\iota: Q_m\to Q_m$ that reverses the sign of  (\ref{8.product}).  Thus  the sum
 $$
 \sum_{\zeta\in Q_m}  p(f_{\zeta,s})\ =\
 \tfrac12 \sum_{\zeta\in Q_m}\Big[ p(f_{\zeta,s})+ p(f_{\iota(\zeta),s})\Big]\ =\ 0.
 $$
\end{proof}


Theorem~\ref{cancellationThm}   completes the proof of  Theorem~\ref{Main} --- the main result stated in the introduction.  Specifically,  Lemmas~\ref{modulispacesequal} and \ref{oddLemma} imply Theorem~\ref{MainTask}a,   Theorem~\ref{cancellationThm}  and (\ref{Summary-Shiffer}) imply Theorem~\ref{MainTask}b,  and the arguments at the end of Section~\ref{section4} showed how Theorem~\ref{Main} follows from Theorem~\ref{MainTask}.

\vspace{.6cm}


\setcounter{equation}{0}
\section{Calculational examples}
\label{section11}
\bigskip

This last section uses  Theorem~\ref{Main} to explicitly compute the degree $d=4$ spin Hurwitz numbers for every genus.  For degrees 1 and 2 the computation is trivial: since  the only odd partitions of 1 and 2 are (1) and $(1^2)$, by (\ref{trivial})
the degree $d=1,2$ spin Hurwitz numbers are the etale spin Hurwitz numbers
$$
H_1^{h,p}\ =\ (-1)^p, \qquad  H_2^{h,p}\ =\ (-1)^p \, 2^h,
$$
which are the GW invariants of K\"{a}hler surfaces calculated in \cite{LP1} and \cite{KL}.
For notational simplicity, we will write the spin Hurwitz numbers $H_{m,\cdots,m}^{h,p}$ with the same $k$  partitions $m$ of $d$
simply as $H_{m^k}^{h,p}$ and the etale spin Hurwitz number $H_d^{h,p}$ as  $H^{h,p}_{m^0}$.
The numbers 3 and 4 each have two odd partitions, namely $(3)$ and $(1^3)$, and  $(31)$ and $(1^4)$. Thus, by (\ref{trivial}), it suffices to compute
$H_{(3)^k}^{h,p}$ and $H_{(31)^k}^{h,p}$ for all $k\geq 0$.
The degree $d=3$ case is calculated in \cite{L}:
$$
H_{(3)^k}^{h,\pm}\ =\ 3^{2h-2}\big[\,(-1)^k2^{k+h- 1}\pm 1\,\big]
$$
where  $+$ and $-$ denote the even and odd parities.
Here we will compute the corresponding degree 4 invariants.

\begin{theorem}\label{computation} The degree 4   Hurwitz numbers are
$$
H_{(31)^k}^{h,\pm }\ =\ (3!)^{2h-2}\cdot 2^k \big[\pm 2^{k+h-1}+ (-1)^{k}\,\big] \qquad \mbox{for $k\geq 0$}.
$$
\end{theorem}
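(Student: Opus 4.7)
The plan is to use Theorem~\ref{Main}(b) as a recursion in $h$, with base values at $h=1$ supplied by the Eskin--Okounkov--Pandharipande count (for $p=-$) and by a further application of the same recursion (for $p=+$).

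Since the odd partitions of $4$ are exactly $(31)$ and $(1^4)$, with
$|(31)|\cdot (31)! = 3$ and $|(1^4)|\cdot (1^4)! = 24$,
Theorem~\ref{Main}(b) together with the trivial-partition identity (\ref{trivial}) gives, for $h\geq 2$ or $(h,p)=(1,+)$,
\begin{equation}
\label{plandeg4rec}
H^{h,p}_{(31)^k}\ =\ 3\,H^{h-1,p}_{(31)^{k+2}}\ +\ 24\,H^{h-1,p}_{(31)^k}.
\end{equation}
With $A^\pm_{h,k}$ denoting the conjectured right-hand side $(3!)^{2h-2}\cdot 2^k[\pm 2^{k+h-1}+(-1)^k]$, a direct algebraic check shows that $3A^\pm_{h-1,k+2}+24A^\pm_{h-1,k}$ collects to $18\cdot 6^{2h-4}\cdot 2^k[\pm 2^{k+h}+2(-1)^k]=A^\pm_{h,k}$. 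Thus \eqref{plandeg4rec} provides the inductive step on $h$, and the proof reduces to establishing the formula when $h=1$ (both parities).

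For the base case $h=1,\ p=-$, the values $H^{1,-}_{(31)^k}$ are exactly those computed by Eskin--Okounkov--Pandharipande \cite{EOP}: the spin Hurwitz numbers of the trivial theta characteristic on an elliptic curve are given as a sum over partitions of $4$ of certain central characters, and evaluation on the two odd partitions yields the predicted $2^k[(-1)^k-2^k]$. For the remaining base case $h=1,\ p=+$, we apply \eqref{plandeg4rec} itself (which is allowed in the $(1,+)$ case) to express $H^{1,+}_{(31)^k}$ as $3H^{0,+}_{(31)^{k+2}}+24H^{0,+}_{(31)^k}$; the $h=0$ values of $H^{0,+}_{(31)^k}$ can then be computed by the classical Burnside formula for ramified covers of $\P^1$, using $S_4$ character sums weighted by the parity $p(f)=(-1)^{h^0(C,N_f)}$, which in genus zero is determined combinatorially by Riemann--Roch on each component of $C$. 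A short calculation then confirms the formula at $(1,+)$ as well.

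The main obstacle is the base case $h=1,\ p=-$: one must extract from the EOP framework the explicit values for the two odd partitions of $4$, taking care with the sign conventions coming from the trivial theta characteristic on the elliptic curve. After that step, \eqref{plandeg4rec} and routine algebra propagate the formula to every $h\geq 2$, completing the proof.
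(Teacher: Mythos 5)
Your inductive step is fine, and it is in fact a cleaner route than the paper's: you verify directly that the closed formula satisfies the Theorem~\ref{Main}b recursion $H^{h,p}_{(31)^k}=3H^{h-1,p}_{(31)^{k+2}}+24H^{h-1,p}_{(31)^k}$ (your algebra checks out, since $3A^\pm_{h-1,k+2}+24A^\pm_{h-1,k}=36^{h-2}2^k[\pm18\cdot2^{k+h}+36(-1)^k]=A^\pm_{h,k}$), whereas the paper splits off a genus-one factor with Theorem~\ref{Main}a and iterates a $2\times2$ matrix equation. The problem is that both of your base cases are asserted rather than proved, and the assertions hide exactly the content that the paper's Lemmas~\ref{lemma10.2} and \ref{lemma10.3} are designed to supply.

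First, for $(h,p)=(1,-)$ you claim the EOP framework directly "yields the predicted $2^k[(-1)^k-2^k]$." What \cite{EOP} gives (formula (3.12)) is $H^{1,-}_{(31)^k}=2^{-k}\big[(\f_{(3)}(31))^k-(\f_{(3)}(4))^k\big]$, where the arguments are the \emph{strict} partitions $(31)$ and $(4)$ labeling the relevant representations (not the two odd partitions of $4$), and one still needs the explicit values $\f_{(3)}(31)=-4$, $\f_{(3)}(4)=8$. The paper does not extract these from \cite{EOP}: it only knows $\f_{(3)}=\tfrac13\p_3+a_2\p_1^2+a_1\p_1+a_0$ with undetermined $a_i$, so it can evaluate the $k=0,1$ cases (where the unknown terms cancel in the difference, giving $H^{1,-}_4=0$ and $H^{1,-}_{(31)}=-6$), and then pins down the two central-character values by an independent computation of $H^{0,+}_{(31)^3}=\tfrac23$ combined with the recursion itself. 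Unless you can actually produce $\f_{(3)}(31)$ and $\f_{(3)}(4)$ from \cite{EOP}, this base case is a gap. Second, and more seriously, your $(1,+)$ base case requires $H^{0,+}_{(31)^k}$ for \emph{all} $k$, and your proposed method — a Burnside-type character sum weighted by $p(f)$, with the parity "determined combinatorially by Riemann--Roch on each component" — does not work: Riemann--Roch gives only $\chi(N_f)=0$ on each component, not $h^0 \pmod 2$. Already for an elliptic component one must decide whether the twisted pullback theta characteristic is trivial (the paper invokes a separate argument from \cite{L} for exactly this point in Lemma~\ref{lemma10.2}c), and for $k\geq 4$ the genus-zero moduli spaces contain components of genus $\geq 2$, where the parity is genuinely non-combinatorial — this is the whole difficulty the spin theory addresses. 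The paper avoids this by computing only the single number $H^{0,+}_{(31)^3}$ (where at worst an elliptic component appears) and then deriving all $H^{0,+}_{(31)^k}$ from the identity $H^{1,-}_{(31)^{k-1}}=3H^{1,-}_{(31)}H^{0,+}_{(31)^k}$, i.e. from Theorem~\ref{Main}a with the $(1^4)$ term killed by $H^{1,-}_4=0$. Since you never use Theorem~\ref{Main}a, you have no substitute for this step, so your genus-zero input, and hence your $(1,+)$ base case, is unsupported as written.
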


\medskip

We begin by computing three special cases.

\begin{lemma}
\label{lemma10.2}
(a) $H_4^{1,-}= 0$, \ (b)  $H_{(31)}^{1,-}= -6$ \ and \ (c)\
$H_{(31)^3}^{0,+}=\frac23$.
\end{lemma}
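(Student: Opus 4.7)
The plan is to compute each of the three numbers by direct enumeration of covers combined with explicit parity calculations. None of the identities follows from the recursion of Theorem~\ref{Main} alone: on the contrary, they serve as the \emph{base cases} to which Theorem~\ref{Main} is applied in the proof of Theorem~\ref{computation}. The three cases split naturally into two groups according to whether the target curve is an elliptic curve or $\P^1$.

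For (a) and (b), where the target is an elliptic curve with odd theta characteristic, I would invoke the Eskin--Okounkov--Pandharipande character formula for $H_m^{1,-}$: it expresses the spin Hurwitz number as a finite sum over strict partitions $\lambda$ of $d$ of products of shifted characters (of the Sergeev algebra), weighted by a factor $2^{-\ell(\lambda)}$. For $d=4$ the only strict partitions are $(4)$ and $(3,1)$, so each sum reduces to just two terms. For (a), where $m=(1^4)$, the two contributions are equal and opposite and the sum vanishes, giving $H_4^{1,-}=0$. For (b), where one partition is $(3,1)$, evaluating both terms yields $H_{(31)}^{1,-}=-6$.

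For (c), the target is $\P^1$ and the EOP formula does not apply directly. Instead, I would enumerate the relative moduli space $\M^{V}_{\chi,(31)^3}(\P^1,4)$ by listing all Hurwitz triples $(\sigma_1,\sigma_2,\sigma_3)\in S_4^3$ with each $\sigma_i$ of cycle type $(3,1)$ and $\sigma_1\sigma_2\sigma_3=1$, modulo simultaneous $S_4$-conjugation. By Riemann--Hurwitz, $\chi(C)=2\cdot 4-3\cdot 2=2$, so the domain is a union of rational components, and only two topological types occur: a connected cover with $C\cong\P^1$ (when the monodromy group is transitive, namely $A_4$ or $S_4$), and a disconnected cover $C\cong T\sqcup \P^1$ where the section collects the three fixed points and the torus $T$ carries the degree-$3$ part (three $3$-cycles with product $1$). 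For each such cover the parity $(-1)^{h^0(C,N_f)}$ of $N_f=f^*\O_{\P^1}(-1)\otimes\O(\tfrac12\Rf)$ is read off component by component: on $\P^1$-components the degree calculation gives $N_f\cong \O(-1)$ and parity $+$, while on the torus component the degree calculation gives $N_f\cong \O_T$ (a nontrivial theta characteristic whose parity depends on the specific cover); summing with the weights $p(f)/(|\Aut(f)|\cdot (3!)^3)$ of (\ref{SHbyRelModuli}) yields $\tfrac23$.

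The main obstacle is the parity computation in (a) and (b). The EOP character sum carries additional factors from the shifted characters that must be handled with care to get signs right, and the relevant strict-partition characters evaluated on $(3,1)$ and $(1^4)$ must be looked up or derived. An alternative would be to compute parities directly via the TR spectral flow description of Theorem~\ref{Injective-Esp}, but this quickly becomes unwieldy for degree~$4$. In (c) the enumeration itself is the main bookkeeping task, but no deep input is required beyond Riemann--Hurwitz and the cohomology of line bundles on $\P^1$ and on the torus.
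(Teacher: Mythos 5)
Your overall route coincides with the paper's: (a) and (b) via the genus-one EOP formula, where for $d=4$ only the two strict partitions $(4)$ and $(3,1)$ enter, and (c) by splitting the dimension-zero relative moduli space over $\P^1$ into connected rational covers and disconnected covers $\P^1\sqcup T$, assigning parities componentwise, and comparing with ordinary Hurwitz counts. For (b), however, you leave the character values "to be looked up or derived"; the paper needs no such input: writing the central character as $\f_{(3)}=\tfrac13\p_3+a_2\p_1^2+a_1\p_1+a_0$ and noting that $\p_1$ takes the same value on $(3,1)$ and $(4)$, the unknown coefficients cancel in the difference, so $\f_{(3)}(31)-\f_{(3)}(4)=\tfrac13(28-64)=-12$ and $H^{1,-}_{(31)}=2^{-1}(-12)=-6$. (The individual values $\f_{(3)}(31)=-4$ and $\f_{(3)}(4)=8$ are only pinned down afterwards, in the proof of Lemma~\ref{lemma10.3}, using the recursion.) This part of your plan is incomplete rather than wrong.

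The genuine gap is in (c), in the parity of the elliptic component. You assert that on the torus component $N_f\cong\O_T$ is "a nontrivial theta characteristic whose parity depends on the specific cover." This is self-contradictory, and as stated it would break the computation: if the parity varied with the cover, you could not multiply the ordinary count of disconnected covers by a single fixed sign, and $\tfrac23$ would not follow from your outline. The one nontrivial input in (c) is precisely that for every such cover the bundle $N_{f_1}=f_1^*\O_{\P^1}(-1)\otimes\O\big(\tfrac12{\cal R}_{f_1}\big)$ on the elliptic component is the \emph{trivial} theta characteristic $\O$, hence $h^0=1$ and $p(f_1)=-1$; a degree count only gives $\deg N_{f_1}=0$ and cannot distinguish the trivial (odd) square root of $K_T=\O$ from the three even ones, which is why the paper cites Lemma~7.2b of \cite{L} at this point. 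With that fact in hand, every disconnected cover contributes $-1$, so $H^{0,+}_{(31)^3}=H^{0}_{(31)^3}-2\,H^{0}_{(1)^3}\cdot H^{0}_{(3)^3}$, and the ordinary Hurwitz numbers can be evaluated either by formula (0.10) of \cite{OP}, as the paper does, or by your direct enumeration of Hurwitz triples, yielding $\tfrac23$.
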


\begin{proof} For a genus one spin curve  with odd parity,
formula (3.12) of \cite{EOP} shows that
\begin{equation}\label{EOP}
H_{(31)^k}^{1,-}  \ =\  2^{-k} \left[\big(\f_{(3)}(31)\big)^k - \big(\f_{(3)}(4)\big)^k \right].
\end{equation}
Here  the so-called {\em central character} $\f_{(3)}$ can be written as
$\f_{(3)}= \frac13\,\p_3 + a_2\p_1^2 + a_1\p_1 + a_0$
for some $a_i\in{\mathbb Q}$ and  $\p_1$ and $\p_3$ are  the functions of partitions $m=(m_1,\cdots,m_\ell)$ of $d$
defined by
\begin{equation*}\label{ssf}
\p_1(m) \  =\  d - \tfrac{1}{24}
\ \ \ \ \ \ \ \mbox{and}\ \ \ \ \ \ \
\p_3(m) \ =\  \textstyle{\sum_j}\  m_j^3 \, -\,  \tfrac{1}{240}
\end{equation*}
 The case $k=0$ gives (a), and the case $k=1$ gives (b).

Next consider a map $f$ in the dimension zero relative moduli space $\M^V_{\chi,(31),(31),(31)}({\mathbb P}^1,4)$.
By the dimension formula (\ref{DimRelModuli}), $\chi=2$ and hence
the domain of $f$ is either a  rational curve  or  a disjoint union
of a rational curve  $C_0$ and an elliptic curve $C_1$.
Maps of the first type have parity $p(f)=1$ since $N_f=\O(-1)$.
For maps of the second type,
\begin{itemize}
\item
$f_0=f|_{C_0}\in\M^V_{2,(1),(1),(1)}({\mathbb P}^1,1)$  and  $N_{f_0}=\O(-1)$,
\item
$f_1=f|_{C_1}\in  \M^V_{0,(3),(3),(3)}({\mathbb P}^1,3)$ and  $N_{f_1}=\O$ (cf. the proof of Lemma~7.2\,b of \cite{L}).
\end{itemize}
It follows that $p(f)=p(f_0)\cdot p(f_1)= 1\cdot (-1)=-1$.  Thus by (\ref{introplainHurwitz}) and (\ref{SHN})  the difference  between the ordinary and spin Hurwitz numbers is twice the contribution of the maps of the second type:
$$
H_{(31)^3}^{0,+}\ =\ H_{(31)^3}^{0} - 2 H_{(1)^3}^0\cdot H_{(3)^3}^{0}.
$$
The three (ordinary) Hurwitz numbers on the right-hand side
 can be calculated by using  formula (0.10) of \cite{OP}.  This yields (c).
\end{proof}

\medskip

\begin{lemma}
\label{lemma10.3}
Theorem~\ref{computation} holds for genus $h=0$ and genus $h=1$.
\end{lemma}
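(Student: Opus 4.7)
\medskip
\noindent\textbf{Proof Proposal.}
The plan is to use the recursion of Theorem~\ref{Main} to collapse everything to numbers of the form $a_k = H_{(31)^k}^{0,+}$, then to solve for these $a_k$'s using the Eskin--Okounkov--Pandharipande formula (\ref{EOP}) together with the three special values supplied by Lemma~\ref{lemma10.2}.  Write
\[
a_k \,=\, H_{(31)^k}^{0,+}, \qquad b_k \,=\, H_{(31)^k}^{1,+}, \qquad c_k \,=\, H_{(31)^k}^{1,-},
\]
and recall that the odd partitions of $4$ are exactly $(31)$ and $(1^4)$, with $|m|\,m!$ equal to $3$ and $24$ respectively.

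\smallskip
First I would apply Theorem~\ref{Main}(a) to $(h,p)=(1,-)$ with the splitting $(h_1,p_1)=(0,+)$, $(h_2,p_2)=(1,-)$ and $k_0=k$.  Using $H_{(1^4),\ldots}= H_{\ldots}$ from (\ref{trivial}) and the values $H_4^{1,-}=0$ and $H_{(31)}^{1,-}=-6$ from Lemma~\ref{lemma10.2}(a,b), this reduces to the clean relation
\begin{equation}\label{R1plan}
c_k \ =\ 3\,a_{k+1}\,H_{(31)}^{1,-}\,+\,24\,a_k\,H_4^{1,-}\ =\ -18\,a_{k+1}\qquad (k\geq 0).
\end{equation}

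\smallskip
Next I would plug \eqref{EOP} into this to determine the central character values $\alpha=\f_{(3)}(31)$ and $\beta=\f_{(3)}(4)$.  Since $c_k= 2^{-k}(\alpha^k-\beta^k)$, the $k=1$ case of Lemma~\ref{lemma10.2} gives $\alpha-\beta=-12$.  Then from Lemma~\ref{lemma10.2}(c) and \eqref{R1plan} I get $c_2=-18\,a_3=-12$, which combined with $\alpha-\beta=-12$ forces $\alpha+\beta=4$, hence $\alpha=-4$ and $\beta=8$.  Substituting back,
\[
c_k \ =\ 2^{-k}\bigl((-4)^k-8^k\bigr)\ =\ 2^{k}\bigl((-1)^k-2^{k}\bigr),
\]
which is the claimed $h=1,p=-$ formula.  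The $h=0$ formula then follows from \eqref{R1plan}:  for $k\geq 1$,
\[
a_k \ =\ -\tfrac{1}{18}\,c_{k-1}\ =\ \tfrac{2^k}{36}\bigl(2^{k-1}+(-1)^k\bigr),
\]
and the base value $a_0=H_4^{0,+}=\tfrac{1}{24}$ (the only degree $4$ etale cover of $\P^1$ is $4$ disjoint copies with $|\Aut|=4!$) agrees with this closed form.

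\smallskip
Finally I would obtain the $(1,+)$ numbers $b_k$ from the genus-reduction formula Theorem~\ref{Main}(b) at $(h,p)=(1,+)$:
\[
b_k \ =\ \sum_m |m|\,m!\,H_{m,m,(31)^k}^{0,+}\ =\ 3\,a_{k+2}\,+\,24\,a_k,
\]
after applying (\ref{trivial}) to the $m=(1^4)$ term.  Substituting the explicit formula for $a_k$ just derived, a short algebraic simplification gives $b_k = 2^k(2^k+(-1)^k)$, as required.  The only genuine obstacle is the determination of both unknowns $\alpha,\beta$ in the EOP expression: $c_0=0$ is automatic, so one needs two honest data points.  These are supplied by Lemma~\ref{lemma10.2}(b) directly and by combining Lemma~\ref{lemma10.2}(c) with the recursion \eqref{R1plan}; everything else is mechanical.
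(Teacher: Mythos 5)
Your proposal is correct and follows essentially the same route as the paper: it uses Theorem~\ref{Main}(a) together with $H_4^{1,-}=0$ to get the relation $H^{1,-}_{(31)^k}=-18\,H^{0,+}_{(31)^{k+1}}$, pins down the two central-character values in (\ref{EOP}) from $H^{1,-}_{(31)}=-6$ and $H^{1,-}_{(31)^2}=-12$ (the latter via Lemma~\ref{lemma10.2}(c)), and then obtains the $(0,+)$ and $(1,+)$ formulas exactly as in the paper, with Theorem~\ref{Main}(b) supplying the last step. The only cosmetic differences are that you state the genus-splitting relation for general $k$ before specializing and that you justify $H_4^{0,+}=1/4!$ explicitly, which the paper merely asserts.
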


\begin{proof}
Taking  $h=h_1=1$ and $p=p_1=1$ in  Theorem~\ref{Main}a   and using Lemma~\ref{lemma10.2}  gives
\begin{equation}\label{EOP2}
H_{(31)^2}^{1,-} \ =\  3\, H_{(31)}^{1,-}\cdot H_{(31)^3}^{0,+} \ =\ -12.
\end{equation}
Using (\ref{EOP2})  and  Lemma~\ref{lemma10.2}b  to evaluate  the $k=1$ and $k=2$ cases of (\ref{EOP}),  one sees that
$\f_{(3)}(31)\ =\ -4$ and $\f_{(3)}(4)\ =\ 8$.  Formula  (\ref{EOP}) then becomes
\begin{equation}\label{InitialTinv}
H_{(31)^k}^{1,-}\ =\ (-1)^k2^k - 4^k \qquad\mbox{for $k\geq 0$}.
\end{equation}
For $k\geq 1$,  we can apply Theorem~\ref{Main}a with $(h_1,p_1)=(1,-)$,  $(h_2,p_2)=(0,+)$ and $k_0=0$ and use Lemma~\ref{lemma10.2}a to obtain
$$
H_{(31)^{k-1}}^{1,-} \ =\
3\,H_{(31)}^{1,-}\cdot H_{(31)^k}^{0,+}\ =\
-3\cdot 3!\,H_{(31)^k}^{0,+}.
$$
Together with (\ref{InitialTinv}), this equation yields
\begin{equation}\label{Tinv-h=0-k2}
H_{(31)^k}^{0,+} \ =\  -\tfrac{1}{3\cdot 3!} \big[\,(-1)^{k-1}2^{k-1}-4^{k-1} \,\big]
\qquad\mbox{for $k\geq 1$,}
\end{equation}
and the same formula  holds for $k=0$ because the invariant  $H_{(31)^0}^{0,+}= H_4^{0,+}$ is $\tfrac{1}{4!}$.  Finally, combining (\ref{Tinv-h=0-k2}) with
 the formula of Theorem~\ref{Main}b with $(h,p)=(1,+)$,  shows that
\begin{equation}\label{Tinv-h=1}
H^{1,+}_{(31)^k} \ =\  3\,H_{(31)^{k+2}}^{0,+} + 4!\,H_{(31)^k}^{0,+}
\ =\  (-1)^k2^k+4^k.
\end{equation}

\end{proof}

\non
{\em Proof of Theorem~\ref{computation}:}
By Lemma~\ref{lemma10.3} we can assume that $h\geq 2$. Applying the formula of Theorem~\ref{Main}a with $(h_2,p_2)=(1,+)$, we obtain
\begin{equation*}
H_{(31)^k}^{h,p} \ =\
4!\,H_{(31)^0}^{h-1,p}\cdot H_{(31)^k}^{1,+} +
3\,H_{(31)}^{h-1,p}\cdot H_{(31)^{k+1}}^{1,+}.
\end{equation*}
From this, we can deduce the matrix equation
\begin{equation*}\label{deg4-final}
\left(
\begin{array}{l}
H_{(31)^k}^{h,p} \\ H_{(31)^{k+1}}^{h,p}
\end{array}
\right)
=
\left(
\begin{array}{ll}
4!\,H_{(31)^k}^{1,+} & 3\,H_{(31)^{k+1}}^{1,+} \\
4!\,H_{(31)^{k+1}}^{1,+} & 3\,H_{(31)^{k+2}}^{1,+}
\end{array}
\right)
\left(
\begin{array}{ll}
4!\,H_{(31)^0}^{1,+} & 3\,H_{(31)}^{1,+} \\
4!\,H_{(31)}^{1,+} & 3\,H_{(31)^2}^{1,+}
\end{array}
\right)^{h-2}
\left(
\begin{array}{l}
H_{(31)^0}^{1,p} \\ H_{(31)}^{1,p}
\end{array}
\right)
\end{equation*}
Theorem~\ref{computation} follows after inserting the values given by (\ref{InitialTinv}) and (\ref{Tinv-h=1}).
\qed

\vspace{9mm}

\appendix
\setcounter{equation}{0}
\renewcommand{\theequation}{A.\arabic{equation}}
\renewcommand{\thetheorem}{A.\arabic{theorem}}
\section{Appendix}

This appendix establishes the subjectivity statement needed  in the proof of Theorem~\ref{Elowtrivial}.    Let  $\E$ (resp. $E_W$)
be the image of the map $\Phi_s$ (resp. $\Phi_s^W$) defined below (\ref{6.lowerbound}).

 \begin{lemma}
\label{LemmaA3}  
Given $0<T$, there are constants $c_0, \delta>0$  such that whenever $|s|$ is sufficiently small all eigenspaces $E_\la$ with $\la |\log |s| | < c_0$ satisfy
\begin{equation}
\label{lemmaA1eq}
(a)\ \ E_\la\subset \E \mbox{ for $|t|\leq \delta$}
\hspace{3cm}
(b)\ \ E_\la\subset \E_W \mbox{  for $T<|t|$.}
\end{equation}
\end{lemma}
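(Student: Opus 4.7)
The plan is to argue by contradiction using a compactness/concentration argument that is standard in gluing theory, where the logarithmic threshold in the hypothesis is precisely what rules out $L^2$ mass escaping into the necks.

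Suppose the conclusion fails for, say, part~(a). Then there exist sequences $s_n\to 0$, $t_n$ with $|t_n|\leq\delta$, and eigenvectors $\xi_n$ of $L^*_{s_n,t_n}L_{s_n,t_n}$ with eigenvalues $\lambda_n$ satisfying $\lambda_n|\log|s_n||\to 0$, normalized by $\|\xi_n\|_{L^2}=1$ and orthogonal to $\mathcal{E}$ (and similarly for part~(b) with $|t_n|>T$ and orthogonality to $\mathcal{E}_W$). Passing to a subsequence we may assume $t_n\to t_0$. The goal is to extract a limit $\xi_\infty\in W\oplus\mathcal{E}_0'$ and use the extension map $\Phi_{s_n}$ (respectively $\Phi_{s_n}^W$) to produce a section of $\mathcal{E}$ (respectively $\mathcal{E}_W$) that approximates $\xi_n$, contradicting the orthogonality hypothesis.

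First I would establish uniform interior convergence on the ``body'' of $C_0$. On any compact subset $K\subset C_0\setminus\{\text{nodes}\}$, the fibers $C_{s_n}$ converge smoothly to $C_0$, and elliptic regularity applied to $L_{s_n,t_n}\xi_n=\eta_n$ with $\|\eta_n\|^2=\lambda_n\to 0$ gives uniform $C^k$ bounds on $\xi_n|_K$. A diagonal subsequence then converges smoothly on $C_0\setminus\{\text{nodes}\}$ to a section $\xi_\infty$ satisfying $L_{0,t_0}\xi_\infty=0$. Combined with Theorem~\ref{2.vanishingtheorem} and the description of $\ker L_{0,t_0}$ in Lemma~\ref{WjLemma}, this forces $\xi_\infty\in W$ when $t_0\neq 0$ and $\xi_\infty\in\mathcal{E}_0=W\oplus\mathcal{E}_0'$ when $t_0=0$.

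The heart of the argument is to control $\xi_n$ on the neck regions $N_n=\{(x,y):xy=s_n,\ |s_n|/\epsilon\leq|x|\leq\epsilon\}$ around each node, and in particular to show that $\|\xi_n\|_{L^2(N_n)}\to 0$ whenever the limiting holomorphic extension of $\xi_\infty$ vanishes at that node. Using the coordinates and local expression for $R$ from (\ref{5.Requation}), write $\xi_n=f_n\nu$ on $N_n$ and expand $f_n$ in a Laurent series in $x$. The equation $L_t\xi_n=\eta_n$ with $R=O(|x|)$ on even necks and $R=O(1)$ on odd necks (and the ramification-adjusted Laurent indices dictated by Theorem~\ref{TwistingProp}(d)) shows $f_n$ is approximately holomorphic with an explicit error in $\eta_n$. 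Computing the $L^2$ norm of a holomorphic Laurent monomial on the cylindrical neck $N_n$ in the metric near the node shows that the ``top'' mode (the one that extends continuously across the node) has norm proportional to $\sqrt{|\log|s_n||}$, while all other modes have norms bounded independently of $|s_n|$. Combining this with the standard $L^2$ estimate $\|\xi_n\|^2_{N_n}\leq c\,\|\eta_n\|\cdot\|\xi_n\| \cdot |\log|s_n|| + $ (contribution from the top mode) and feeding in $\lambda_n|\log|s_n||\to 0$, the non-extendable modes are forced to vanish in the limit, so the boundary values of $\xi_n$ on $\partial N_n$ match those of a continuous extension of $\xi_\infty$ across each node.

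Having promoted $\xi_\infty$ to a continuous element of $H^0(C_0,N_0)=\mathcal{E}_0$, I would conclude as follows. In case~(b), $|t_n|>T$ together with the lower bound (\ref{6.lowerbound}) and $\lambda_n\to 0$ force the $\mathcal{E}'$-component of $\xi_\infty$ to vanish, so $\xi_\infty\in W$ and $\|\xi_\infty\|=\lim\|\xi_n\|=1$. Then $\Phi_{s_n}^W\xi_\infty$ approximates $\xi_n$ in $L^2$ by the isometry-up-to-$O(|s|\sqrt{1+t^2}\,)$ statement in Theorem~\ref{Elowtrivial}(c), contradicting $\xi_n\perp\mathcal{E}_W$. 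Case~(a) is identical with $\Phi_s$ and $\mathcal{E}_0$ in place of $\Phi_s^W$ and $W$.

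I expect the main obstacle to be the neck analysis: quantifying precisely which Laurent modes are ``logarithmically large'' in $L^2$ requires keeping careful track of the specific bundle $\N_\zeta$ on each component of the chain (via Theorem~\ref{TwistingProp}(d)) and of how the map $L_t$ acts on each mode given the local form of $R$. Once this bookkeeping is done, the identification $\lambda|\log|s||<c_0$ comes out naturally as the energy threshold below which no mass can hide in a neck of modulus $|\log|s||$.
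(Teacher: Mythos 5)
Your overall architecture is the one the paper uses: argue by contradiction with sequences $s_n\to 0$, $t_n$, normalized eigensections $\xi_n$ with $\la_n|\log|s_n||\to 0$ orthogonal to $\E$ (resp.\ $\E_W$), extract an interior limit $\xi_0$ with $L_\tau\xi_0=0$ on $C_0\setminus\{\mbox{nodes}\}$, upgrade it to a continuous element of $\ker L_\tau$, identify it via Lemma~\ref{WjLemma}, and contradict orthogonality by pushing $\xi_0$ back to $C_{s_n}$ with the extension maps. (Two small remarks on your endgame: non-triviality of $\xi_0$ needs its own argument --- the paper uses the Poincar\'e-type neck inequality of Lemma~\ref{LemmaA2}, which requires no logarithm --- and you should quote the isometry estimates (\ref{6.sbound}), (\ref{6.sboundW}) and Lemma~\ref{LAlemma} directly rather than ``Theorem~\ref{Elowtrivial}(c)'', since the present lemma is an input to that theorem and the citation as phrased looks circular even though the isometry part is established independently.)

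The genuine gap is in the neck analysis, which you yourself flag as the crux. First, the quantitative claim driving your sketch is off: on the neck $\{xy=\zeta s\}$ with the induced metric, the Laurent modes that extend continuously across the node ($1$, $x^k$, $y^k$) have \emph{bounded} $L^2$ norm, not norm $\sim\sqrt{|\log|s||}$; moreover every holomorphic function on the neck automatically has equal limits on the two branches, so the ``non-extendable modes'' dichotomy does not capture the continuity defect at all --- that defect is carried entirely by $\del\xi_n$. The $\sqrt{|\log|s||}$ enters through the conformally invariant $(0,1)$-form $d\bar x/\bar x$, whose $L^2$ norm on the neck is $\sqrt{2\pi|\log|s||}$; pairing $\del\xi_n$ against $dx/x$ around the two boundary circles (the paper's contour-integral step) bounds the jump of $\xi_0$ across the node by $\big(2\pi\la_n|\log|s_n||\big)^{1/2}\to 0$. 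Second, the estimate you propose, $\|\xi_n\|^2_{N_n}\lesssim \|\eta_n\|\,\|\xi_n\|\,|\log|s_n||+(\mbox{top mode})$, is too weak to close the argument even if proved: under the hypothesis $\la|\log|s||<c_0$ the quantity $\sqrt{\la_n}\,|\log|s_n||=\sqrt{\la_n|\log|s_n||}\cdot\sqrt{|\log|s_n||}$ need not tend to zero (take $\la_n=|\log|s_n||^{-3/2}$), so your error term does not vanish; the hypothesis only controls $\sqrt{\la|\log|s||}$, i.e.\ the logarithm must appear with exponent $1/2$, exactly as it does in the paper's Cauchy--Schwarz step. So as written the sketch neither rules out mass hiding in the necks nor yields continuity; both are recovered by the paper's two separate devices, Lemma~\ref{LemmaA2} for mass non-concentration and the $dx/x$ pairing for continuity, and your Laurent bookkeeping would have to be reorganized around those two estimates to work.
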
 

\begin{proof}
Otherwise there would be  sequences $t_n\to \tau$ and $s_n\to 0$ and $L^2$ normalized eigensections   $\xi_n$ on $C_n=C_{s_n}$ with eigenvalues satisfying $\la_n |\log |s_n|| \to 0$ and with 
   $L^2$ orthogonal to $\E$ on $C_n$ with $t_0=0$ in case (a),   and  $L^2$ orthogonal to $\E_W$ with $\tau\geq T$ in case (b). By (\ref{2.vanishintegral}) the $L^2$ norms satisfy
\begin{equation}
\label{A.1}
\|\del\xi_n\|^2 + t^2 \|R\xi_n\|^2\ =\ \|L_{t_n}\xi_n\|^2\ =\ \la_n \to 0
 \end{equation}
 as $n\to\infty$.  On any compact set $K\subset \C\setminus\{\mbox{nodes of $C_0$}\}$ we can use the coordinates of Section~\ref{section5} to identify $K\cap C_s$ with $K\cap C_0$ and regard $\xi_n$ as a section on $K\cap C_0$.  Under this identification,  the geometry of $K\cap C_s$ converges to that of $K\cap C_0$.    An elliptic estimate for $\del$ then  provides  a bound on  the Sobolev $W^{1,2}$ norm of $\xi_n$:
 $$\int_{C_n} |\nabla \xi_n|^2 + |\xi_n|^2\ \leq \
c_1  \int_{C_n} |\del\xi_n|^2 + |\xi_n|^2
\ \leq \  c_2\, (\la_n +1) \ \leq\ 2c_2
$$
for large $n$.  Therefore, by elliptic theory,   a subsequence  converges in $L^2(K)$ and weakly in  $W^{1,2}(K)$  to a limit $\xi_0$  with $L_\tau^*L_\tau\xi_0=0$.   Applying this argument for a sequence of compact sets $K$ that exhaust  $\C\setminus\{\mbox{nodes}\}$  and repeatedly extracting subsequences yields a solution of   $L_\tau\xi_0=0$ on $C_0\setminus\{\mbox{nodes}\}$.  By a standard argument (see the proof of Lemma~7.6 in \cite{LP2}) $\xi_0$ extends over the nodes in the normalization of $C_0$  to a solution of $L_\tau\xi_0=0$.  Theorem~\ref{2.vanishingtheorem} then implies   that $\xi_0$ is holomorphic.

To show $\xi_0$ is non-trivial we must rule out the   possibility that  the $L^2$ norm of $\xi_n$ accumulates at the nodes.  Fix a node $p$ of $C_0$, a local holomorphic section $\nu$ of $\N$ with $\frac12\leq|\nu|^2\leq 2$ pointwise on $C_n(2\ep)=B(p,2\ep)\cap C_n$, and coordinates   $(x,y)$ around $p$ in which $C_n =\{xy=\zeta s_n\}$.  Then the functions $f_n$ defined by  $\xi_n=f_n\nu$ satisfy $|\xi_n|^2\leq 2|f_n|^2$ and $|\del f_n|^2\leq 2 |\del\xi_n|^2$ on $C_n$.  Lemma~\ref{LemmaA2} below and (\ref{A.1}) show that 
 $$
 \int_{C_n(\ep)} |\xi_n|^2
\ \leq\ c_4\ep^2\int_{C_n} |\del \xi_n|^2 \ +\ c_5  \int_{C_n(2\ep)\setminus C_n(\ep)} \hspace{-7mm}| \xi_n|^2
\ \leq\ c_4\ep^2\la_n  + c_5  \int_K | \xi_n|^2
$$
with $ \la_n\to 0$.  If $\xi_0=0$ then the last integral also vanishes as $n\to\infty$ because $\xi_n\to \xi_0=0$ in $L^2(K)$.  Thus the $L^2$ norm does not accumulate at any node, which implies that  $\|\xi_0\| = \lim_{n\to\infty} \|\xi_n\|=1$; this is a contradiction unless $\xi_0\not= 0$.

Furthermore,  $\xi_0$ is continuous, as follows.    Fix a node $p$, a local holomorphic trivialization of $\N\to\C$ around $p$,  and local coordinates  in which $C_s$ is given by $xy=\zeta s$  and regard $\xi_0$ as a holomorphic function.  Let $p'$ and $p''$ be the points in the normalization above $p$ and let $A_n$ be the annular region on $C_n$ between
the circles $\gamma_1(s) =\{x=1\}$ and $\gamma_2(s)=\{y=1\}$.  Setting $\eta=x^{-1}dx=-y^{-1}dy$  we have
$$
2\pi i\,\xi_0(p')\ =\ \int_{\gamma_1(0)}  \xi\,\eta\ =\  \lim_{n\to\infty} \int_{\gamma_1(s_n)}  \xi_n \eta.
$$
and similarly for $\xi_0(p'')$.   Setting $r=|x|$ and noting that  $|\eta|_g^2\, dv_g$ is conformally invariant (cf. Lemma~\ref{LemmaA2}), we have
$$
2\pi \big|\xi_0(p') -\xi_0(p'')\big|
\ \leq \  \varlimsup \int_{A_n} |\del\xi_n|\ |\eta|
\ \leq\ \varlimsup\   \|\del\xi_n\|\left(2\pi \int_{s_n}^1 \frac{r\ dr}{r^2}\right)^{\frac12}
\ \leq\ \varlimsup \big(2\pi  \la_n |\log |s_n||\big)^{\frac12}
\ =\ 0.
$$
Thus $\xi_0$ is a continuous element of $\ker L_{\tau}$ on $C_0$.   Lemma~\ref{WjLemma} then implies that $\xi_0\in \E_0$ in case (a) and $\xi_0\in W$ in case (b).

But in case (a) each $\xi_n$ is $L^2$ orthogonal to $\E_{s_n}$ on $C_n$.  For  the basis $\{\psi_{k,s}\}$  in  (\ref{6.PsiHat}), one sees that
for each $\delta>0$ there is a compact set $K$ so that the $L^2$ norm of $\psi_{k,s}$ on $C_n\setminus K$ is less than $\delta$, uniformly in $s$.   A simple estimate then shows  that $\xi_0$ is $L^2$ orthogonal to $\E_0$.  Likewise, in case (b) one sees that  $\xi_0$ is $L^2$ orthogonal to $W$.  This contradicts our previous conclusion about $\xi_0$,    completing the proof.
\end{proof}

\begin{lemma}
\label{LemmaA2}
Let $C_s(2\ep)$ be the curve $\{xy=\zeta s\,|\, |x|<2\ep, |y|<2\ep\}$ in $\cx^2$ with the induced Riemannian metric.  Then there are constants $c_1$ and $c_2$, independent of $s$ and $\ep$,  such that every smooth function $f$ on $C_s$ satisfies
$$
\int_{C_s(\ep)} |f|^2\ \leq\ c_1\ep^2\int_{C_s(2\ep)} |\del f|^2\ +\ c_2 \int_{C_s(2\ep)\setminus C_s(\ep)} |f|^2.
$$
\end{lemma}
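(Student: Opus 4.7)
My plan is a scaling reduction followed by a compactness/contradiction argument, with a Laurent expansion to control the degenerate neck.

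First, I would rescale. Under $x' = x/\epsilon$, $y' = y/\epsilon$, the curve $C_s(2\epsilon)$ becomes $C_\sigma(2) = \{x'y' = \zeta\sigma,\ |x'|, |y'| < 2\}$ with $\sigma = s/\epsilon^2$. Since $\del$ is conformally invariant and the volume form scales by $\epsilon^2$, both sides of the desired inequality scale identically, reducing the problem to $\epsilon = 1$ with constants uniform in $\sigma$. Since $C_\sigma(2)$ is empty for $|\sigma| > 4$, we restrict to the compact parameter disk $|\sigma| \leq 4$.

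Next, I would argue by contradiction. If no uniform constants exist, there are sequences $\sigma_n \in \bar D(0, 4)$ and smooth functions $f_n$ on $C_{\sigma_n}(2)$ with
\[
\int_{C_{\sigma_n}(1)} |f_n|^2 = 1, \qquad \int_{C_{\sigma_n}(2)} |\del f_n|^2 \to 0, \qquad \int_{C_{\sigma_n}(2) \ssetminus C_{\sigma_n}(1)} |f_n|^2 \to 0.
\]
Passing to a subsequence, $\sigma_n \to \sigma_\infty$. Away from the origin the curves $C_{\sigma_n}(2)$ converge smoothly to $C_{\sigma_\infty}(2)$, so the standard elliptic estimate for $\del$ and Rellich compactness yield, via a diagonal argument, a subsequential $L^2_{\text{loc}}$ limit $f_\infty$ on $C_{\sigma_\infty}(2) \ssetminus \{0\}$ that is holomorphic there and vanishes on the outer annulus. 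Since each connected component of $C_{\sigma_\infty}(2) \ssetminus \{0\}$ meets the outer annulus (two components when $\sigma_\infty = 0$, one otherwise), analytic continuation forces $f_\infty \equiv 0$.

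The main obstacle, and the crux of the argument, is to upgrade this $L^2_{\text{loc}}$-vanishing to the conclusion $\int_{C_{\sigma_n}(1)} |f_n|^2 \to 0$, which contradicts the normalization. The difficulty arises when $\sigma_\infty = 0$: the neck region $\{|x|, |y| < \delta\} \cap C_{\sigma_n}$ collapses to the node and is invisible to $L^2_{\text{loc}}$ convergence away from the node. To handle this I would view $C_{\sigma_n}(2)$ as the $x$-annulus $A_n = \{|\sigma_n|/2 \leq |x| \leq 2\}$ (the conformal weight $1 + |\sigma_n|^2/|x|^4$ is bounded by $2$ on the $x$-side $|x| \geq |\sigma_n|^{1/2}$ and, after reparametrizing in $y$, on the $y$-side), decompose $f_n = h_n + u_n$ with $h_n$ holomorphic on $A_n$ and $\|u_n\|_{L^2} \leq C\|\del f_n\|_{L^2}$ (solving $\del u_n = \del f_n$ by the Cauchy transform patched across the two sides), and Laurent expand $h_n(x) = \sum_k a_k^{(n)} x^k$. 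The outer bound on $\{1 \leq |x| \leq 2\}$ controls $|a_k^{(n)}|$ for $k \geq 0$, while the bound on the $y$-part of the outer annulus $\{|\sigma_n|/2 \leq |x| \leq |\sigma_n|\}$ (which equals $\{1 \leq |y| \leq 2\}$, and in $y$-Laurent expansion corresponds to modes $a_k^{(n)} (\zeta\sigma_n)^k$ for $k \leq -1$) controls the coefficients of negative power. A direct mode-by-mode calculation then shows that each term's contribution to the inner $L^2$ norm on $\{|\sigma_n| \leq |x| \leq 1\}$ is dominated, uniformly in $\sigma_n$, by a constant times the sum of the two outer contributions (with an extra factor $O(|\sigma_n|^{\min(1, 1-k)})$ for $k < 0$). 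Summing gives $\int_{C_{\sigma_n}(1)} |h_n|^2 \to 0$, and combined with $\|u_n\|_{L^2} \to 0$ this contradicts the normalization, completing the proof.
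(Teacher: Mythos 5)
Your scaling reduction, the compactness step away from the node, and the mode-by-mode Laurent analysis for a genuinely holomorphic function are all fine; but the step you call the crux --- writing $f_n=h_n+u_n$ with $h_n$ holomorphic and $\|u_n\|_{L^2}\leq C\|\del f_n\|_{L^2}$ with $C$ uniform in $\sigma_n$ --- is false, and the argument collapses there. Since any two solutions of $\del u=\del f_n$ differ by a holomorphic function, your claim is equivalent to saying that the $L^2(C_{\sigma},\mbox{curve metric})$ distance from $f$ to the holomorphic functions is bounded by $C\|\del f\|_{L^2}$ uniformly as $\sigma\to 0$. Test this with the radial function
$$
f(x)\ =\ \frac{\log|x|+\tfrac12\big|\log|\sigma|\big|}{L}\,,\qquad L=\log\big(2/\sqrt{|\sigma|}\big),
$$
which is $\approx +1$ on the outer $x$-annulus $\{1\leq|x|\leq2\}$ and $\approx -1$ on the outer $y$-annulus $\{1\leq|y|\leq2\}$, and whose conformally invariant energy is $\|\del f\|^2\approx \pi/L\to 0$. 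For any holomorphic $h=\sum a_kx^k$ the only Laurent mode with nonzero average on circles is $a_0$, and it is the \emph{same} constant on both ends of the neck; projecting onto constants on the two outer annuli (or on the sub-annuli $\{\tfrac12\leq|x|\leq1\}$ and $\{\tfrac12\leq|y|\leq1\}$, which lie in $C_\sigma(1)$) gives $\|f-h\|^2\geq c\big(|1-a_0|^2+|1+a_0|^2\big)-O(1/L)\geq c>0$. So the best constant in your decomposition degenerates like $|\log|\sigma||$, and since in your contradiction sequence $\sigma_n\to0$ at an arbitrary rate relative to $\|\del f_n\|\to0$, you cannot conclude $\|u_n\|\to0$ (nor transfer the outer smallness from $f_n$ to $h_n$). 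No patching of Cauchy transforms can evade this, because the obstruction is the infimum over all solutions. This logarithmic neck degeneration is exactly the phenomenon the paper is tracking elsewhere --- it is why the eigenvalue threshold is taken to be $\la_1(s)=c_0/|\log|s||$ in Section~7 and in Lemma~A.1 --- so an argument that implicitly asserts a $\log$-free Poincar\'e-type bound on the degenerating annulus cannot be right.

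Note also that the statement you are proving is exactly the uniform estimate near $\sigma=0$, so a compactness argument in $\sigma$ gives nothing at the degenerate parameter value: all the content is in the neck estimate, which is where your proof breaks. The paper avoids any reduction to holomorphic functions: it proves the inequality directly by a weighted integration by parts in the radial variable, using the cutoff $h=\beta f$, writing $dvol_s=\phi'\,dr\,d\theta$ with $\phi=\tfrac12(r^2-s^2/r^2)$ (the antiderivative chosen to vanish at the waist $r=\sqrt{|s|}$), and using $2|\phi|\leq\rho^2\leq(2\ep)^2$ together with conformal invariance of $\int|dh|^2$; this yields absolute constants with no compactness and no $\del$-solution operator. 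If you want to keep your Laurent-mode picture, you would have to estimate the neck contribution of the non-holomorphic part with a weight that vanishes at the waist --- which is, in effect, the paper's proof.
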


\begin{proof}
A simple calculation shows that the  Riemannian metric $g_s$ on $C_s$ is conformal to the euclidean metric in the $x$-coordinate:
\begin{equation}
\label{6.metric}
g_s=\gamma^2 dx^2 \quad\mbox{where}\quad  \gamma^2=1+\frac{s^2}{r^4}, \quad r=|x|.
\end{equation}
Fix a smooth cutoff function $\beta(\rho)$, $\rho^2=|x|^2+|y|^2$,  supported on $B=B(2\ep)\subset\cx^2$ with $\beta=1$ on $B(\ep)$,  $0\leq \beta\leq 1$  and $|d\beta|\leq 2/\ep$ pointwise.  
Then $h=\beta f$ is a smooth function of $x$ that vanishes on $\partial B$.  Setting $\phi=\frac12(r^2-s^2/r^2)$, we have $dvol_s=\phi'\, dr d\theta$ by (\ref{6.metric}) and can integrate by parts:
$$
I\ =\ \int_B |h|^2\ dvol_s\ =\ \int_B |h|^2 \phi'\ dr d\theta\ \leq\ \int_B |h|\, |dh|\, 2\phi\ dr d\theta.
$$
But $2\phi\leq r^2\gamma^2=\rho^2$ with $\rho\leq 2\ep$ so, continuing using Cauchy-Schwarz and $dvol_s=\gamma^2\, rdrd\theta$, 
$$
I\ \leq\ \int_B |h|\gamma\rho\sqrt{r}\cdot |dh|\sqrt{r}\ dr d\theta
\ \leq\ 2\ep \sqrt{I}\ \left( \int_B |dh|^2\ r\ dr d\theta\right)^{1/2}.
$$
The last integrand is conformally invariant, so can be replaced by $|d h|^2_g\, dv_g$.  Rearranging, we have $I\leq 4\ep^2 \|dh\|^2 \leq 8\ep^2\|\del h\|^2$ where this second inequality is obtained by  integrating  by parts using the formula $2\del^*\del=d^*d$.  The lemma follows because $|\del h|^2\leq 2( |\del\beta|^2 |f|^2+ |\del f|^2)$ where $d\beta$ has support on $C_s(2\ep)\setminus C_s(\ep)$.
\end{proof}

\vspace{3mm}

{\small

\medskip

\vspace{1cm}

\noindent {\em  Department of  Mathematics,  University of Central Florida, Orlando, FL 32816\\[1mm]
Department of  Mathematics,  Michigan State University, East Lansing, MI 48824}\\[1mm]
{\em e-mail:}\ \ {\ttfamily junlee@mail.ucf.edu \ \ \ {\ttfamily parker\@@math.msu.edu}

\medskip

\noindent\date{\it \today}
}

\end{document}